\theoremstyle{plain}
\newtheorem{thm}{\protect\theoremname}[section]
\theoremstyle{definition}
\newtheorem{defn}[thm]{\protect\definitionname}
\theoremstyle{plain}
\newtheorem{lem}[thm]{\protect\lemmaname}
\newenvironment{proof}[1][\protect\proofname]{\par
\normalfont\topsep6\p@\@plus6\p@\relax
\trivlist
\itemindent\parindent
\item[\hskip\labelsep\scshape #1]\ignorespaces
}{%
\endtrivlist\@endpefalse
}
\providecommand{\proofname}{Proof}
\theoremstyle{remark}
\newtheorem{rem}[thm]{\protect\remarkname}
\theoremstyle{plain}
\newtheorem{prop}[thm]{\protect\propositionname}
\theoremstyle{plain}
\newtheorem{cor}[thm]{\protect\corollaryname}
\providecommand{\corollaryname}{Corollary}
\providecommand{\definitionname}{Definition}
\providecommand{\lemmaname}{Lemma}
\providecommand{\propositionname}{Proposition}
\providecommand{\remarkname}{Remark}
\providecommand{\theoremname}{Theorem}
\newcommandx\tr[7][usedefault, addprefix=\global, 1=\eta, 2=X, 3=Y, 4=Z, 5=u, 6=v, 7=w]{#1:\quad#2\overset{#5}{\rightarrow}#3\overset{#6}{\rightarrow}#4\overset{#7}{\rightarrow}T#2}
\newcommandx\trp[7][usedefault, addprefix=\global, 1=\eta', 2=X', 3=Y', 4=Z', 5=u', 6=v', 7=w']{#1:\quad#2\overset{#5}{\rightarrow}#3\overset{#6}{\rightarrow}#4\overset{#7}{\rightarrow}T#2}
\newcommandx\tro[7][usedefault, addprefix=\global, 1=\eta, 2=X, 3=Y, 4=Z, 5=u, 6=v, 7=w]{#1=(#2,#3,#4,#5,#6,#7)}
\newcommandx\trop[7][usedefault, addprefix=\global, 1=\eta', 2=X', 3=Y', 4=Z', 5=u', 6=v', 7=w']{#1=(#2,#3,#4,#5,#6,#7)}
\newcommandx\catri[3][usedefault, addprefix=\global, 1=\mathcal{C}, 2=T, 3=\triangle]{\left(#1,#2,#3\right)}
\newcommandx\Cogennr[3][usedefault, addprefix=\global, 1=M, 2=n, 3=\mathcal{X}]{\operatorname{Cogen}_{#2}^{#3}(#1)}
\newcommandx\Gennr[3][usedefault, addprefix=\global, 1=M, 2=n, 3=\mathcal{X}]{\operatorname{Gen}_{#2}^{#3}(#1)}
\newcommandx\Hom[2][usedefault, addprefix=\global, 1=M, 2=N]{\operatorname{Hom}(#1{},#2{})}
\global\long\def\im#1{\operatorname{Im}#1}
\newcommandx\Add[1][usedefault, addprefix=\global, 1=\mathcal{M}]{\operatorname{Add}#1}
\newcommandx\p[2][usedefault, addprefix=\global, 1=\mathcal{A}, 2=\mathcal{B}]{\left(#1,#2\right)}
\newcommandx\ts[2][usedefault, addprefix=\global, 1=\mathcal{V}, 2=\mathcal{V}]{\left(#1^{\leq0},#2^{\geq0}\right)}
\newcommandx\cts[2][usedefault, addprefix=\global, 1=\mathcal{U}, 2=\mathcal{U}]{\left(#1_{\geq0},#2_{\leq0}\right)}
\newcommandx\Press[2][usedefault, addprefix=\global, 1=R, 2=\kappa]{\operatorname{Pres}_{\leq#2}\left(#1\right)}
\newcommandx\Summ[1][usedefault, addprefix=\global, 1=\mathcal{M}]{\operatorname{Summ}#1}
\newcommandx\thick[1][usedefault, addprefix=\global, 1=\mathcal{X}]{\operatorname{thick}\left(#1\right)}
\newcommandx\Coprod[1][usedefault, addprefix=\global, 1=\mathcal{M}]{\operatorname{Coprod}#1}
\newcommandx\Prodd[1][usedefault, addprefix=\global, 1=\mathcal{M}]{\operatorname{Prodd}#1}
\newcommandx\pd[1][usedefault, addprefix=\global, 1=M]{\operatorname{pd}#1}
\newcommandx\Gpd[3][usedefault, addprefix=\global, 1=M, 2=\mathcal{X}, 3=\mathcal{Y}]{\operatorname{Gpd}_{\left(#2,#3\right)}#1}
\newcommandx\WGpd[3][usedefault, addprefix=\global, 1=M, 2=\mathcal{X}, 3=\mathcal{Y}]{\operatorname{WGpd}_{\left(#2,#3\right)}#1}
\newcommandx\Gid[3][usedefault, addprefix=\global, 1=M, 2=\mathcal{X}, 3=\mathcal{Y}]{\operatorname{Gid}_{\left(#2,#3\right)}#1}
\newcommandx\id[1][usedefault, addprefix=\global, 1=M]{\operatorname{id}#1}
\newcommandx\pdr[2][usedefault, addprefix=\global, 1=M, 2=\mathcal{A}]{\operatorname{pd}_{#2}#1}
\newcommandx\idr[2][usedefault, addprefix=\global, 1=M, 2=\mathcal{B}]{\operatorname{id}_{#2}#1}
\newcommandx\Ker[1][usedefault, addprefix=\global, 1=M]{\operatorname{Ker}#1}
\newcommandx\Ann[1][usedefault, addprefix=\global, 1=M]{\operatorname{Ann}#1}
\renewcommandx\im[1][usedefault, addprefix=\global, 1=M]{\operatorname{Im}#1}
\newcommandx\Cok[1][usedefault, addprefix=\global, 1=M]{\operatorname{Coker}#1}
\newcommandx\Mod[1][usedefault, addprefix=\global, 1=R]{\operatorname{Mod}#1}
\newcommandx\Mor[1][usedefault, addprefix=\global, 1=R]{\operatorname{Mor}#1}
\newcommandx\Proj[1][usedefault, addprefix=\global, 1=R]{\operatorname{Proj}#1}
\newcommandx\Gp[2][usedefault, addprefix=\global, 1=\mathcal{X}, 2=\mathcal{Y}]{\mathcal{GP}_{(#1,#2)}}
\newcommandx\WGp[2][usedefault, addprefix=\global, 1=\mathcal{X}, 2=\mathcal{Y}]{\mathcal{WGP}_{(#1,#2)}}
\newcommandx\Gi[2][usedefault, addprefix=\global, 1=\mathcal{X}, 2=\mathcal{Y}]{\mathcal{GI}_{(#1,#2)}}
\newcommandx\WGi[2][usedefault, addprefix=\global, 1=\mathcal{X}, 2=\mathcal{Y}]{\mathcal{WGI}_{(#1,#2)}}
\newcommandx\nGp[2][usedefault, addprefix=\global, 1=\mathcal{X}, 2=\mathcal{Y}]{\mathcal{GP}_{(#1,#2)}^{<0}}
\newcommandx\nWGp[2][usedefault, addprefix=\global, 1=\mathcal{X}, 2=\mathcal{Y}]{\mathcal{WGP}_{(#1,#2)}^{<0}}
\newcommandx\nGi[2][usedefault, addprefix=\global, 1=\mathcal{X}, 2=\mathcal{Y}]{\mathcal{GI}_{(#1,#2)}^{<0}}
\newcommandx\nWGi[2][usedefault, addprefix=\global, 1=\mathcal{X}, 2=\mathcal{Y}]{\mathcal{WGI}_{(#1,#2)}^{<0}}
\newcommandx\proj[1][usedefault, addprefix=\global, 1=R]{\operatorname{proj}#1}
\newcommandx\Inj[1][usedefault, addprefix=\global, 1=R]{\operatorname{Inj}#1}
\newcommandx\inj[1][usedefault, addprefix=\global, 1=R]{\operatorname{inj}#1}
\newcommandx\suc[5][usedefault, addprefix=\global, 1=N, 2=M, 3=K, 4=, 5=]{0\rightarrow#1\overset{#4}{\rightarrow}#2\overset{#5}{\rightarrow}#3\rightarrow0}
\newcommandx\Cogen[1][usedefault, addprefix=\global, 1=M]{\operatorname{Cogen}#1}
\newcommandx\Pres[3][usedefault, addprefix=\global, 1=T, 2=\mathcal{D}, 3=n]{\operatorname{Pres}_{#2}^{#3}#1}
\newcommandx\Gen[1][usedefault, addprefix=\global, 1=M]{\operatorname{Gen}#1}
\newcommandx\Genn[2][usedefault, addprefix=\global, 1=M, 2=n]{\operatorname{Gen}_{#2}(#1)}
\newcommandx\Ext[4][usedefault, addprefix=\global, 1=M, 2=i, 3=R, 4=X]{\operatorname{Ext}_{#3}^{#2}\left(#1,#4\right)}
\newcommandx\Tor[4][usedefault, addprefix=\global, 1=M, 2=i, 3=R, 4=X]{\operatorname{Tor}_{#2}^{#3}\left(#1,#4\right)}
\renewcommandx\Hom[3][usedefault, addprefix=\global, 1=R, 2=M, 3=N]{\operatorname{Hom}{}_{#1}(#2,#3)}
\newcommandx\End[2][usedefault, addprefix=\global, 1=R, 2=M]{\operatorname{End}{}_{#1}(#2)}
\newcommandx\Homk[4][usedefault, addprefix=\global, 1=\mathcal{K}(R), 2=\sigma, 3=\omega, 4=1]{\operatorname{Hom}{}_{#1}(#2,#3[#4])}
\newcommandx\add[1][usedefault, addprefix=\global, 1=\mathcal{M}]{\operatorname{add}#1}
\newcommandx\soc[1][usedefault, addprefix=\global, 1=M]{\operatorname{soc}#1}
\newcommandx\rad[1][usedefault, addprefix=\global, 1=M]{\operatorname{rad}#1}
\newcommandx\K[1][usedefault, addprefix=\global, 1=R]{\mathcal{K}(#1)}
\newcommandx\D[1][usedefault, addprefix=\global, 1=R]{\mathcal{D}(#1)}
\newcommandx\pro[1][usedefault, addprefix=\global, 1=\mathcal{M}]{\operatorname{Prod}#1}
\renewcommandx\pd[1][usedefault, addprefix=\global, 1=M]{\operatorname{pd}#1}
\newcommandx\h[1][usedefault, addprefix=\global, 1=x]{\operatorname{hgt}\left(#1\right)}
\newcommandx\Findim[1][usedefault, addprefix=\global, 1=R_{R}]{\operatorname{Findim}#1}
\newcommandx\findim[1][usedefault, addprefix=\global, 1=R_{R}]{\operatorname{findim}#1}
\newcommandx\ortoder[1][usedefault, addprefix=\global, 1=\mathcal{M}]{#1^{\bot}}
\newcommandx\ortoizq[1][usedefault, addprefix=\global, 1=\mathcal{M}]{^{\bot}#1}
\newcommandx\ortoderi[2][usedefault, addprefix=\global, 1=\mathcal{M}, 2=i]{#1{}^{\bot_{#2}}}
\newcommandx\ortoizqi[2][usedefault, addprefix=\global, 1=\mathcal{M}, 2=i]{^{\bot_{#2}}#1}
\newcommandx\cores[3][usedefault, addprefix=\global, 1=N, 2=P, 3=f]{0\rightarrow#1\overset{#3_{0}}{\rightarrow}#2_{0}\overset{#3_{1}}{\rightarrow}...\overset{#3_{n}}{\rightarrow}#2_{n}\rightarrow...}
\newcommandx\res[3][usedefault, addprefix=\global, 1=N, 2=E, 3=f]{...\rightarrow#2^{n}\overset{#3_{n}}{\rightarrow}...\overset{#3_{1}}{\rightarrow}#2^{0}\overset{#3_{0}}{\rightarrow}#1\rightarrow0}
\renewcommandx\Add[1][usedefault, addprefix=\global, 1=\mathcal{M}]{\operatorname{Add}\left(#1\right)}
\renewcommandx\p[2][usedefault, addprefix=\global, 1=\mathcal{A}, 2=\mathcal{B}]{\left(#1,#2\right)}
\renewcommandx\Press[2][usedefault, addprefix=\global, 1=\mathcal{C}, 2=\kappa]{\operatorname{Pres}_{\leq#2}\left(#1\right)}
\renewcommandx\Summ[1][usedefault, addprefix=\global, 1=\mathcal{M}]{\operatorname{Summ}#1}
\renewcommandx\Coprod[1][usedefault, addprefix=\global, 1=\mathcal{M}]{\operatorname{Coprod}#1}
\renewcommandx\Prodd[1][usedefault, addprefix=\global, 1=\mathcal{M}]{\operatorname{Prodd}#1}
\renewcommandx\pd[1][usedefault, addprefix=\global, 1=M]{\operatorname{pd}\left(#1\right)}
\renewcommandx\id[1][usedefault, addprefix=\global, 1=M]{\operatorname{id}\left(#1\right)}
\renewcommandx\pdr[2][usedefault, addprefix=\global, 1=M, 2=\mathcal{A}]{\operatorname{pd}_{#2}\left(#1\right)}
\renewcommandx\idr[2][usedefault, addprefix=\global, 1=M, 2=\mathcal{B}]{\operatorname{id}_{#2}\left(#1\right)}
\renewcommandx\Ker[1][usedefault, addprefix=\global, 1=M]{\operatorname{Ker}#1}
\newcommandx\Keri[1][usedefault, addprefix=\global, 1=M]{\operatorname{Ker}^{-1}\left(#1\right)}
\renewcommandx\Ann[1][usedefault, addprefix=\global, 1=M]{\operatorname{Ann}#1}
\renewcommandx\im[1][usedefault, addprefix=\global, 1=M]{\operatorname{Im}\left(#1\right)}
\renewcommandx\Cok[1][usedefault, addprefix=\global, 1=M]{\operatorname{Coker}\left(#1\right)}
\newcommandx\Coki[1][usedefault, addprefix=\global, 1=M]{\operatorname{Coker}^{-1}\left(#1\right)}
\renewcommandx\Mod[1][usedefault, addprefix=\global, 1=R]{\operatorname{Mod}\left(#1\right)}
\renewcommandx\Mor[1][usedefault, addprefix=\global, 1=\mathcal{C}]{\operatorname{Mor}\left(#1\right)}
\renewcommandx\Proj[1][usedefault, addprefix=\global, 1=\mathcal{C}]{\operatorname{Proj}\left(#1\right)}
\renewcommandx\proj[1][usedefault, addprefix=\global, 1=\mathcal{C}]{\operatorname{proj}\left(#1\right)}
\renewcommandx\Inj[1][usedefault, addprefix=\global, 1=\mathcal{C}]{\operatorname{Inj}\left(#1\right)}
\renewcommandx\inj[1][usedefault, addprefix=\global, 1=\mathcal{C}]{\operatorname{inj}\left(#1\right)}
\renewcommandx\suc[5][usedefault, addprefix=\global, 1=N, 2=M, 3=K, 4=, 5=]{0\rightarrow#1\overset{#4}{\rightarrow}#2\overset{#5}{\rightarrow}#3\rightarrow0}
\renewcommandx\Cogen[1][usedefault, addprefix=\global, 1=M]{\operatorname{Cogen}\left(#1\right)}
\newcommandx\Cogenn[2][usedefault, addprefix=\global, 1=M, 2=n]{\operatorname{Cogen}_{#2}(#1)}
\renewcommandx\Cogennr[3][usedefault, addprefix=\global, 1=M, 2=n, 3=\mathcal{X}]{\operatorname{Cogen}_{#2}^{#3}(#1)}
\renewcommandx\Pres[1][usedefault, addprefix=\global, 1=M]{\operatorname{Pres}\left(#1\right)}
\renewcommandx\Gen[1][usedefault, addprefix=\global, 1=M]{\operatorname{Gen}\left(#1\right)}
\renewcommandx\Genn[2][usedefault, addprefix=\global, 1=M, 2=n]{\operatorname{Gen}_{#2}(#1)}
\renewcommandx\Gennr[3][usedefault, addprefix=\global, 1=M, 2=n, 3=\mathcal{X}]{\operatorname{Gen}_{#2}^{#3}(#1)}
\renewcommandx\Ext[4][usedefault, addprefix=\global, 1=M, 2=i, 3=\mathcal{C}, 4=X]{\operatorname{Ext}_{#3}^{#2}\left(#1,#4\right)}
\renewcommandx\Tor[4][usedefault, addprefix=\global, 1=M, 2=i, 3=\mathcal{C}, 4=X]{\operatorname{Tor}_{#2}^{#3}\left(#1,#4\right)}
\renewcommandx\Hom[3][usedefault, addprefix=\global, 1=\mathcal{C}, 2=M, 3=N]{\operatorname{Hom}{}_{#1}(#2,#3)}
\renewcommandx\End[2][usedefault, addprefix=\global, 1=\mathcal{C}, 2=M]{\operatorname{End}{}_{#1}(#2)}
\renewcommandx\Homk[4][usedefault, addprefix=\global, 1=\mathcal{K}(\mathcal{C}), 2=\sigma, 3=\omega, 4=1]{\operatorname{Hom}{}_{#1}(#2,#3[#4])}
\renewcommandx\add[1][usedefault, addprefix=\global, 1=\mathcal{M}]{\operatorname{add}#1}
\renewcommandx\soc[1][usedefault, addprefix=\global, 1=M]{\operatorname{soc}#1}
\renewcommandx\rad[1][usedefault, addprefix=\global, 1=M]{\operatorname{rad}#1}
\renewcommandx\K[1][usedefault, addprefix=\global, 1=\mathcal{C}]{\mathcal{K}(#1)}
\renewcommandx\D[1][usedefault, addprefix=\global, 1=R]{\mathcal{D}(#1)}
\renewcommandx\pro[1][usedefault, addprefix=\global, 1=\mathcal{M}]{\operatorname{Prod}#1}
\renewcommandx\pd[1][usedefault, addprefix=\global, 1=M]{\operatorname{pd}\left(#1\right)}
\renewcommandx\h[1][usedefault, addprefix=\global, 1=x]{\operatorname{hgt}\left(#1\right)}
\renewcommandx\Findim[1][usedefault, addprefix=\global, 1=\mathcal{C}]{\operatorname{Findim}\left(#1\right)}
\renewcommandx\findim[1][usedefault, addprefix=\global, 1=\mathcal{C}]{\operatorname{findim}\left(#1\right)}
\newcommandx\Findimr[2][usedefault, addprefix=\global, 1=\mathcal{C}, 2=\mathcal{X}]{\operatorname{Findim}_{#2}\left(#1\right)}
\newcommandx\findimr[2][usedefault, addprefix=\global, 1=\mathcal{C}, 2=\mathcal{X}]{\operatorname{findim}_{#2}\left(#1\right)}
\renewcommandx\cores[3][usedefault, addprefix=\global, 1=N, 2=P, 3=f]{0\rightarrow#1\overset{#3_{0}}{\rightarrow}#2_{0}\overset{#3_{1}}{\rightarrow}...\overset{#3_{n}}{\rightarrow}#2_{n}\rightarrow...}
\renewcommandx\res[3][usedefault, addprefix=\global, 1=N, 2=E, 3=f]{...\rightarrow#2^{n}\overset{#3_{n}}{\rightarrow}...\overset{#3_{1}}{\rightarrow}#2^{0}\overset{#3_{0}}{\rightarrow}#1\rightarrow0}
\renewcommandx\tr[7][usedefault, addprefix=\global, 1=\eta, 2=X, 3=Y, 4=Z, 5=u, 6=v, 7=w]{#1:\quad#2\overset{#5}{\rightarrow}#3\overset{#6}{\rightarrow}#4\overset{#7}{\rightarrow}T#2}
\renewcommandx\trp[7][usedefault, addprefix=\global, 1=\eta', 2=X', 3=Y', 4=Z', 5=u', 6=v', 7=w']{#1:\quad#2\overset{#5}{\rightarrow}#3\overset{#6}{\rightarrow}#4\overset{#7}{\rightarrow}T#2}
\renewcommandx\tro[7][usedefault, addprefix=\global, 1=\eta, 2=X, 3=Y, 4=Z, 5=u, 6=v, 7=w]{#1=(#2,#3,#4,#5,#6,#7)}
\renewcommandx\trop[7][usedefault, addprefix=\global, 1=\eta', 2=X', 3=Y', 4=Z', 5=u', 6=v', 7=w']{#1=(#2,#3,#4,#5,#6,#7)}
\renewcommandx\catri[3][usedefault, addprefix=\global, 1=\mathcal{C}, 2=T, 3=\triangle]{\left(#1,#2,#3\right)}
\renewcommandx\Cogennr[3][usedefault, addprefix=\global, 1=M, 2=n, 3=\mathcal{X}]{\operatorname{Cogen}_{#2}^{#3}(#1)}
\renewcommandx\Gennr[3][usedefault, addprefix=\global, 1=M, 2=n, 3=\mathcal{X}]{\operatorname{Gen}_{#2}^{#3}(#1)}
\renewcommandx\Hom[2][usedefault, addprefix=\global, 1=M, 2=N]{\operatorname{Hom}(#1{},#2{})}
\global\long\def\im#1{\operatorname{Im}#1}
\renewcommandx\Add[1][usedefault, addprefix=\global, 1=\mathcal{M}]{\operatorname{Add}#1}
\renewcommandx\p[2][usedefault, addprefix=\global, 1=\mathcal{A}, 2=\mathcal{B}]{\left(#1,#2\right)}
\renewcommandx\ts[2][usedefault, addprefix=\global, 1=\mathcal{V}, 2=\mathcal{V}]{\left(#1^{\leq0},#2^{\geq0}\right)}
\renewcommandx\cts[2][usedefault, addprefix=\global, 1=\mathcal{U}, 2=\mathcal{U}]{\left(#1_{\geq0},#2_{\leq0}\right)}
\renewcommandx\Press[2][usedefault, addprefix=\global, 1=R, 2=\kappa]{\operatorname{Pres}_{\leq#2}\left(#1\right)}
\renewcommandx\Summ[1][usedefault, addprefix=\global, 1=\mathcal{M}]{\operatorname{Summ}#1}
\renewcommandx\Coprod[1][usedefault, addprefix=\global, 1=\mathcal{M}]{\operatorname{Coprod}#1}
\renewcommandx\Prodd[1][usedefault, addprefix=\global, 1=\mathcal{M}]{\operatorname{Prodd}#1}
\renewcommandx\pd[1][usedefault, addprefix=\global, 1=M]{\operatorname{pd}#1}
\renewcommandx\Gpd[3][usedefault, addprefix=\global, 1=M, 2=\mathcal{X}, 3=\mathcal{Y}]{\operatorname{Gpd}_{\left(#2,#3\right)}#1}
\renewcommandx\WGpd[3][usedefault, addprefix=\global, 1=M, 2=\mathcal{X}, 3=\mathcal{Y}]{\operatorname{WGpd}_{\left(#2,#3\right)}#1}
\renewcommandx\Gid[3][usedefault, addprefix=\global, 1=M, 2=\mathcal{X}, 3=\mathcal{Y}]{\operatorname{Gid}_{\left(#2,#3\right)}#1}
\renewcommandx\id[1][usedefault, addprefix=\global, 1=M]{\operatorname{id}#1}
\renewcommandx\pdr[2][usedefault, addprefix=\global, 1=M, 2=\mathcal{A}]{\operatorname{pd}_{#2}\left(#1\right)}
\renewcommandx\idr[2][usedefault, addprefix=\global, 1=M, 2=\mathcal{B}]{\operatorname{id}_{#2}\left(#1\right)}
\renewcommandx\Ker[1][usedefault, addprefix=\global, 1=M]{\operatorname{Ker}#1}
\renewcommandx\Ann[1][usedefault, addprefix=\global, 1=M]{\operatorname{Ann}#1}
\renewcommandx\im[1][usedefault, addprefix=\global, 1=M]{\operatorname{Im}#1}
\renewcommandx\Cok[1][usedefault, addprefix=\global, 1=M]{\operatorname{Coker}#1}
\renewcommandx\Mod[1][usedefault, addprefix=\global, 1=R]{\operatorname{Mod}#1}
\renewcommandx\Mor[1][usedefault, addprefix=\global, 1=R]{\operatorname{Mor}#1}
\renewcommandx\Proj[1][usedefault, addprefix=\global, 1=R]{\operatorname{Proj}#1}
\renewcommandx\Gp[2][usedefault, addprefix=\global, 1=\mathcal{X}, 2=\mathcal{Y}]{\mathcal{GP}_{(#1,#2)}}
\newcommandx\Gpm[1][usedefault, addprefix=\global, 1=\mbox{Add}M]{\mathcal{GP}_{#1}}
\renewcommandx\WGp[2][usedefault, addprefix=\global, 1=\mathcal{X}, 2=\mathcal{Y}]{\mathcal{WGP}_{(#1,#2)}}
\newcommandx\WGpm[1][usedefault, addprefix=\global, 1=\mbox{Add}M]{\mathcal{WGP}_{#1}}
\renewcommandx\Gi[2][usedefault, addprefix=\global, 1=\mathcal{X}, 2=\mathcal{Y}]{\mathcal{GI}_{(#1,#2)}}
\newcommandx\Gim[1][usedefault, addprefix=\global, 1=\mbox{Add}M]{\mathcal{GI}_{#1}}
\renewcommandx\WGi[2][usedefault, addprefix=\global, 1=\mathcal{X}, 2=\mathcal{Y}]{\mathcal{WGI}_{(#1,#2)}}
\newcommandx\WGim[1][usedefault, addprefix=\global, 1=\mbox{Add}M]{\mathcal{WGI}_{#1}}
\renewcommandx\proj[1][usedefault, addprefix=\global, 1=R]{\operatorname{proj}#1}
\renewcommandx\Inj[1][usedefault, addprefix=\global, 1=R]{\operatorname{Inj}#1}
\renewcommandx\inj[1][usedefault, addprefix=\global, 1=R]{\operatorname{inj}#1}
\renewcommandx\suc[5][usedefault, addprefix=\global, 1=N, 2=M, 3=K, 4=, 5=]{0\rightarrow#1\overset{#4}{\rightarrow}#2\overset{#5}{\rightarrow}#3\rightarrow0}
\renewcommandx\Cogen[1][usedefault, addprefix=\global, 1=M]{\operatorname{Cogen}#1}
\renewcommandx\Pres[3][usedefault, addprefix=\global, 1=T, 2=\mathcal{D}, 3=n]{\operatorname{Pres}_{#2}^{#3}#1}
\renewcommandx\Gen[1][usedefault, addprefix=\global, 1=M]{\operatorname{Gen}#1}
\renewcommandx\Genn[2][usedefault, addprefix=\global, 1=M, 2=n]{\operatorname{Gen}_{#2}(#1)}
\renewcommandx\Ext[4][usedefault, addprefix=\global, 1=M, 2=i, 3=\mathcal{C}, 4=X]{\operatorname{Ext}_{#3}^{#2}\left(#1,#4\right)}
\renewcommandx\Tor[4][usedefault, addprefix=\global, 1=M, 2=i, 3=R, 4=X]{\operatorname{Tor}_{#2}^{#3}\left(#1,#4\right)}
\renewcommandx\Hom[3][usedefault, addprefix=\global, 1=\mathcal{C}, 2=M, 3=N]{\operatorname{Hom}{}_{#1}(#2,#3)}
\renewcommandx\End[2][usedefault, addprefix=\global, 1=R, 2=M]{\operatorname{End}{}_{#1}(#2)}
\renewcommandx\Homk[4][usedefault, addprefix=\global, 1=\mathcal{K}(R), 2=\sigma, 3=\omega, 4=1]{\operatorname{Hom}{}_{#1}(#2,#3[#4])}
\renewcommandx\add[1][usedefault, addprefix=\global, 1=\mathcal{M}]{\operatorname{add}#1}
\renewcommandx\soc[1][usedefault, addprefix=\global, 1=M]{\operatorname{soc}#1}
\renewcommandx\rad[1][usedefault, addprefix=\global, 1=M]{\operatorname{rad}#1}
\renewcommandx\K[1][usedefault, addprefix=\global, 1=R]{\mathcal{K}(#1)}
\renewcommandx\D[1][usedefault, addprefix=\global, 1=R]{\mathcal{D}(#1)}
\renewcommandx\pro[1][usedefault, addprefix=\global, 1=\mathcal{M}]{\operatorname{Prod}#1}
\renewcommandx\pd[1][usedefault, addprefix=\global, 1=M]{\operatorname{pd}\left(#1\right)}
\renewcommandx\h[1][usedefault, addprefix=\global, 1=x]{\operatorname{hgt}\left(#1\right)}
\renewcommandx\Findim[1][usedefault, addprefix=\global, 1=R_{R}]{\operatorname{Findim}#1}
\renewcommandx\findim[1][usedefault, addprefix=\global, 1=R_{R}]{\operatorname{findim}#1}
\renewcommandx\ortoder[1][usedefault, addprefix=\global, 1=\mathcal{M}]{#1^{\bot}}
\renewcommandx\ortoizq[1][usedefault, addprefix=\global, 1=\mathcal{M}]{^{\bot}#1}
\renewcommandx\ortoderi[2][usedefault, addprefix=\global, 1=\mathcal{M}, 2=i]{#1{}^{\bot_{#2}}}
\renewcommandx\ortoizqi[2][usedefault, addprefix=\global, 1=\mathcal{M}, 2=i]{^{\bot_{#2}}#1}
\renewcommandx\cores[3][usedefault, addprefix=\global, 1=N, 2=P, 3=f]{0\rightarrow#1\overset{#3_{0}}{\rightarrow}#2_{0}\overset{#3_{1}}{\rightarrow}...\overset{#3_{n}}{\rightarrow}#2_{n}\rightarrow...}
\renewcommandx\res[3][usedefault, addprefix=\global, 1=N, 2=E, 3=f]{...\rightarrow#2^{n}\overset{#3_{n}}{\rightarrow}...\overset{#3_{1}}{\rightarrow}#2^{0}\overset{#3_{0}}{\rightarrow}#1\rightarrow0}
\newcommandx\colim[1][usedefault, addprefix=\global, 1=F]{\operatorname{colim}#1}
\begin{document}

\title[Yoneda Ext]{The Yoneda Ext  and arbitrary coproducts in abelian categories}
\author[A. Argud{\'i}n Monroy]{Alejandro Argud{\'i}n Monroy}
\address{Instituto de Matem{\'a}ticas, Universidad Nacional Aut{\'o}noma de M{\'e}xico, Circuito Exterior, Ciudad Universitaria, C.P.04510 Mexico City, Mexico.}
\email{argudin@ciencias.unam.mx}

\thanks{The author thanks the Project PAPIIT-Universidad Nacional Aut\'onoma de M\'exico IN103317.}
\subjclass[2020]{Primary 18E99 , 18G15, 18A30}
\keywords{Extensions , Yoneda ext , coproducts , products , Ab4 categories}

\begin{abstract}
There are well known identities  involving the Ext bifunctor, coproducts,
and products in Ab4  abelian categories with enough projectives.
 Namely, for every such category $\mathcal{A}$, given
an object $X$ and a set of objects $\{ A_i \}_{i\in I}$, the following isomorphism can be built
$ \Ext[\bigoplus_{i\in I}A_{i}][n][\mathcal{A}]\cong\prod_{i\in I}\Ext[A_{i}][n][\mathcal{A}] $,
where $\operatorname{Ext}^n$  is the $n$-th derived functor of the Hom functor.
The goal of this paper is to show a similar isomorphism
for the $n$-th Yoneda Ext, which is a functor equivalent to $\operatorname{Ext}^n$ that can be defined in more general contexts.  The desired isomorphism is constructed
explicitly by using colimits,  in Ab4  abelian categories with not necessarily
enough projectives nor injectives, answering a question made by R. Colpi and K R. Fuller in \cite{colpi2007tilting}. Furthermore, the isomorphisms constructed are
used to characterize Ab4 categories. A dual result is also stated.
\end{abstract}
\maketitle

\section{Introduction }

The study of extensions is a theory that has developed from multiplicative
groups \cite{Schreirer1926,Holder1893}, with applications ranging
from representations of central simple algebras \cite{Brauer1928,Hasse1932}
to topology \cite{Eilenberg1942}.

In this article we will focus on extensions in an abelian category
$\mathcal{C}$. In this context, an extension of an object $A$ by
an object $C$ is a short exact sequence
\[
\suc[A][][C]
\]
up to equivalence, where two exact sequences are equivalent if there
is a morphism from one to another with identity morphisms at the ends.
This kind of approach was first made by R. Baer in 1934. On his work
\cite{Baer1934,Baer1935}, Baer defined an addition on the class $\Ext[C][1][\,][A]$
of extensions of an abelian group $A$ by an abelian  group $C$.
His construction can be easily extended to abelian categories, where
it is used to show that the class $\Ext[C][1][][A]$ has a natural
structure of abelian group. For this reason usually $\Ext[C][1][][A]$
is called the group of extensions of $A$ by $C$.

Later on, H. Cartan and S. Eilenberg \cite{Cartan-Eilemberg}, using
methods of homological algebra, showed that the first derived
functor of the $\Hom[\mathcal{C}][C][-]$ functor, or $\Hom[][-][A]$
functor, is isomorphic to $\Ext[C][1][][-]$, or respectively to $\Ext[-][1][\mathcal{C}][A]$.
This result marked the beginning of a series of research works looking
for ways of constructing the derived functors of the Hom functor without
using projective or injective objects, with the spirit that resolutions
should be only a calculation tool for derived functors. 

One of this attempts, registered in the work of D. Buchsbaum, B. Mitchell,
S. Schanuel, S. Mac Lane, M.C.R. Butler, and G. Horrocks \cite{maclanehomology,buchbaumext,Extrel,mitchell},
was based in the ideas of N. Yoneda \cite{Yoneda1954,Yoneda1960},
defining what is known today as the theory of $n$-extensions and
the functor called as the Yoneda Ext. An $n$-extension of an object
$A$ by an object $C$ is an exact sequence of length $n$
\[
\suc[A][M_{1}\rightarrow\cdots\rightarrow M_{n}][C]
\]
up to equivalence, where the equivalence of exact sequences of length
$n>1$ is defined in a similar way as was defined for length $1$.
In this theory, the Baer sum can be extended to $n$-extensions, proving
that the class $\Ext[C][n][\mathcal{C}][A]$ of $n$-extensions of
$A$ by $C$ is an abelian group.

Recently, the generalization of homological techniques such as Gorenstein
or tilting objects to abstract contexts \cite{tilcotiltcorresp,relgor,colpi2007tilting,vcoupek2017cotilting},
such as abelian categories that do not necessarily have projectives
or injectives, claim for the introduction of an Ext functor that can
be used without restraints. The only problem is that it is not clear if the rich
properties of the homological Ext are also valid for the
Yoneda Ext. The goal of this work is to make a next step by exploring
some properties that the Yoneda Ext shares with the homological Ext.

Namely, we will explore the following property that is well known
for module categories:
\begin{thm}
\cite[Proposition 7.21]{rotman} Let $R$ be a ring, $M\in\Mod$, and $\{N_{i}\}_{i\in I}$
be a set of $R$-modules. Then, there exist an isomorphism
\[
\Ext[\bigoplus_{i\in I}N_{i}][n][R][M]\cong\prod_{i\in I}\Ext[N_{i}][n][R][M]\mbox{.}
\]

\end{thm}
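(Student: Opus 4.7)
The plan is the classical derived-functor computation. First, for each $i \in I$ I would choose a projective resolution $P_\bullet^{(i)} \to N_i$ in $\operatorname{Mod}(R)$. Since $\operatorname{Mod}(R)$ is a Grothendieck category (in particular Ab4), arbitrary coproducts of exact sequences are exact and coproducts of projective modules are projective, so $\bigoplus_{i\in I} P_\bullet^{(i)} \to \bigoplus_{i\in I} N_i$ is a projective resolution of the coproduct.

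Second, I would apply the contravariant functor $\operatorname{Hom}_R(-, M)$ to this resolution. The universal property of the coproduct gives, in each degree $k$, a natural isomorphism
\[
\operatorname{Hom}_R\!\left(\bigoplus_{i\in I} P_k^{(i)},\, M\right) \;\cong\; \prod_{i\in I} \operatorname{Hom}_R\!\left(P_k^{(i)},\, M\right),
\]
and these assemble into an isomorphism of cochain complexes between $\operatorname{Hom}_R(\bigoplus_i P_\bullet^{(i)}, M)$ and $\prod_i \operatorname{Hom}_R(P_\bullet^{(i)}, M)$.

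Third, I would use that $\operatorname{Mod}(R)$ is Ab4${}^{\ast}$ (arbitrary products are exact), so the cohomology functor commutes with products; combining this with the previous step yields
\[
\Ext[\bigoplus_{i\in I} N_i][n][R][M] = H^n\!\left(\operatorname{Hom}_R(\textstyle\bigoplus_i P_\bullet^{(i)}, M)\right) \cong \prod_{i\in I} H^n\!\left(\operatorname{Hom}_R(P_\bullet^{(i)}, M)\right) = \prod_{i\in I}\Ext[N_i][n][R][M].
\]

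There is no genuine obstacle at this level of generality: the three ingredients — enough projectives, the conversion of coproducts into products by $\operatorname{Hom}$, and the exactness of arbitrary products — are all standard features of $\operatorname{Mod}(R)$. The point of this section is pedagogical: to highlight precisely which of these ingredients fails, and must be replaced, once one moves to the Yoneda Ext in an Ab4 abelian category without enough projectives. In that setting no resolution-based computation is possible, and the isomorphism must be produced directly from $n$-fold extensions via an explicit colimit construction, which is the main task undertaken in the remainder of the paper.
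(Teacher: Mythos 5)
Your proof is correct and is essentially the argument the paper has in mind: this theorem is cited from Rotman without proof, and the standard derived-functor computation you give (coproduct of projective resolutions, $\operatorname{Hom}$ turning coproducts into products, exactness of products when taking cohomology) is exactly that cited proof. Your closing remark also correctly identifies which ingredient (enough projectives) the rest of the paper is designed to dispense with.
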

The proof of such theorem  can be extended to Ab4  abelian
categories with enough projectives. Our goal
will be to prove an analogue result for the Yoneda Ext without assuming
the existence of enough projectives.

Let us now describe the contents of this paper. Section 2 is devoted
to review the basic results of the theory of extensions by following
the steps of B. Mitchell in \cite{mitchell}. In section
3 we prove the desired theorem. More precisely, we show that in an
Ab4 abelian category we can build the desired bijections explicitly
by using colimits. Finally, in section 4 we use the bijections constructed in section 3
to characterize Ab4 categories.

\section{Extensions}

In this section we will remember the basic theory of extensions. As
was mentioned before, the theory of $n$-extensions was created by
Nobuo Yoneda in \cite{Yoneda1954}. In such paper he worked in a category
of modules and most of the results are related with the homological
tools built by projective and injective modules. Since our goal is
to work in an abelian category without depending on the existence
of projective or injective objects, we refer the reader to the work
of Barry Mitchell \cite{mitchell} for an approach in abelian categories
without further assumptions. Throughout this paper, $\mathcal{C}$
will denote an abelian category. 
\begin{defn}
\cite[Section 1]{mitchell} Let $C\in\mathcal{C}$, and $\alpha:A\rightarrow B$,
$\alpha':A'\rightarrow B'$ be morphisms in $\mathcal{C}$. We set the
following notation:
\begin{enumerate}
\item $\nabla_{C}:=\left(\begin{smallmatrix}1_{C} & 1_{C}\end{smallmatrix}\right):C\oplus C\rightarrow C\mbox{;}$
\item $\Delta_{C}:=\left(\begin{smallmatrix}1_{C}\\
1_{C}
\end{smallmatrix}\right):C\rightarrow C\oplus C\mbox{;}$
\item $\alpha\oplus\alpha':=\left(\begin{smallmatrix}\alpha & 0\\
0 & \alpha'
\end{smallmatrix}\right):A\oplus A'\rightarrow B\oplus B'\mbox{.}$
\end{enumerate}
\end{defn}

\subsection{1-Extensions}

Let us begin by recalling some basic facts and notation about 1-extensions.
\begin{defn}
\cite[Section 1]{mitchell} Let $\alpha:N\rightarrow N'$, $\beta:M\rightarrow M'$,
and $\gamma:K\rightarrow K'$ be morphisms in $\mathcal{C}$, and consider the following 
 short exact sequences in $\mathcal{C}$
 \[
 \eta:\;\suc[][][][f][g]\mbox{ and }\eta':\;\suc[N'][M'][K'][f'][g']\mbox{.}
\]
\begin{enumerate}
\item We say that $(\alpha,\beta,\gamma):\eta\rightarrow\eta'$  is a morphism
of short exact sequences if 
\[
\beta f=f'\alpha\;\mbox{and}\;\gamma g=g'\beta\mbox{.}
\]

\item We denote by $\eta\oplus\eta'$ to the short exact sequence 
\[
\suc[N\oplus N'][M\oplus M'][K\oplus K'][f\oplus f'][g\oplus g']\mbox{.}
\]

\end{enumerate}
\end{defn}

\begin{defn}
\cite[Section 1]{mitchell} For $N,K\in\mathcal{C}$, let $\mathcal{E}_{\mathcal{C}}(K,N)$
denote the class of short exact sequences of the form $\suc\mbox{.}$\end{defn}
\begin{rem}
Let $A,C\in\mathcal{C}$ and $\eta,\eta'\in\mathcal{E}_{\mathcal{C}}(C,A)$.
Consider the relation $\eta\equiv\eta'$ given by the existence of
a short exact sequence morphism $(1_{A},\beta,1_{C}):\eta\rightarrow\eta'\mbox{.}$
By the snake lemma, we know that $\beta$ is an isomorphism, and hence
$\equiv$ is an equivalence relation on $\mathcal{E}_{\mathcal{C}}(C,A)$.\end{rem}
\begin{defn}
\cite[Section 1]{mitchell} Consider $A,C\in\mathcal{C}$. 
\begin{enumerate}
\item Let $\Ext[C][1][][A]:=\mathcal{E}_{\mathcal{C}}(C,A)/\equiv\mbox{;}$
\item Each object of $\Ext[C][1][][A]$ is refered as an extension from
$A$ to $C$.
\item Every extension from $A$ to $C$ will be denoted with a capital letter
$E$, or by $\overline{\eta}$, in case $\eta$ is a representative
of the class $E$.
\item Given $\overline{\eta}\in\Ext[C][1][][A]$ and $\overline{\eta'}\in\Ext[C'][1][][A']$,
we will call extension morphism from $\overline{\eta}$ to $\overline{\eta'}$,
to every short exact sequence morphism $\eta\rightarrow\eta'$.
\item If $(\alpha,\beta,\gamma):E\rightarrow E'$ and $(\alpha',\beta',\gamma'):E'\rightarrow E''$
are extension morphisms, we define the composition morphism as 
\[
(\alpha',\beta',\gamma')(\alpha,\beta,\gamma):=(\alpha'\alpha,\beta'\beta,\gamma'\gamma)\mbox{.}
\]

\end{enumerate}
\end{defn}
\begin{rem}
An essential comment made by B. Mitchell in \cite{mitchell}
is that the class  $\Ext[C][1][][A]$ may not be a set (see \cite[Chapter 6, Exercise A]{freyd1964abelian}
for an example). Considering this fact,
we should be cautious when we talk about correspondences between extensions
classes. Nevertheless, by simplicity we will say that a correspondence
\[
\Phi:\Ext[C'][1][][A']\rightarrow\Ext[C][1][][A]
\]
is a function, if it associates to each $\overline{\eta}\in\Ext[C'][1][][A']$
a single element $\Phi(\overline{\eta})$ in $\Ext[C][1][][A]$. 
\end{rem}
Remember the following result.\\
\begin{minipage}[t]{0.58\columnwidth}%
\begin{prop}
\cite[Lemma 1.2]{maclanehomology}\label{prop:pb:operar a izquierda}
Consider a morphism $\alpha:X\rightarrow K$  and an exact sequence $\suc[][][][f][g]$
 in $\mathcal{C}$. If $(E,\alpha',g')$ is the
pullback diagram of the morphisms $g$ and $\alpha$, then there is
an exact short sequence $\eta_{\alpha}$ and a morphism $(1,\alpha',\alpha):\eta_{\alpha}\rightarrow\eta$.\end{prop}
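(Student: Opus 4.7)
The plan is to take $\eta_\alpha$ to be the pullback short exact sequence
\[
\eta_\alpha:\;0\longrightarrow N\overset{f'}{\longrightarrow}E\overset{g'}{\longrightarrow}X\longrightarrow 0,
\]
where $(E,\alpha',g')$ is the given pullback, so that $\alpha':E\to M$, $g':E\to X$, and $g\alpha'=\alpha g'$. The first step is to construct $f':N\to E$: the pair of morphisms $f:N\to M$ and $0:N\to X$ satisfies $gf=0=\alpha\cdot 0$, so the universal property of the pullback yields a unique $f'$ with $\alpha' f' = f$ and $g' f' = 0$.

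Next I would verify that $\eta_\alpha$ is exact. Monicity of $f'$ is immediate since $\alpha' f' = f$ is monic. The morphism $g'$ is an epimorphism: this is the standard fact that pullbacks preserve epimorphisms in an abelian category, verified by taking any $\beta$ with $\beta g'=0$, lifting along $g$ via the pullback property, and concluding $\beta=0$. For exactness at $E$, let $k:K'\to E$ be the kernel of $g'$. Then $g\alpha' k = \alpha g' k = 0$, so $\alpha' k$ factors as $fh$ for a unique $h:K'\to N$; the pullback's uniqueness applied to $f'h$ and $k$, which agree on compositions with $\alpha'$ (both give $fh$) and with $g'$ (both give $0$), yields $k=f'h$, so $\operatorname{Ker}g'\subseteq\operatorname{Im}f'$. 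The reverse inclusion is $g' f'=0$.

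Finally, $(1_N,\alpha',\alpha)$ is a morphism $\eta_\alpha\to\eta$ because $\alpha' f'=f\cdot 1_N$ and $g\alpha'=\alpha g'$ are built into the construction. The only delicate point in the argument is the exactness at $E$; the remaining parts follow formally from the pullback's universal property. Essentially, the proposition records the classical assertion that pulling back a short exact sequence along any morphism produces a short exact sequence in an abelian category.
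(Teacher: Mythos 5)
Your proof is correct and is essentially the classical argument: the paper itself offers no proof of this proposition, citing it directly from Mac Lane, and your construction of $f'$ via the universal property, the monicity of $f'$ from $\alpha'f'=f$, and the kernel comparison for exactness at $E$ are exactly the standard steps. The only point you gloss is the epimorphy of $g'$: the phrase ``lifting along $g$ via the pullback property'' does not by itself produce the factorization you need, since the pullback property builds maps \emph{into} $E$; to make it precise one should use that $E$ is the kernel of $(g,-\alpha):M\oplus X\rightarrow K$ (so that a test morphism $\beta$ with $\beta g'=0$ yields $(0,\beta)$ vanishing on that kernel, hence factoring through the epimorphism $(g,-\alpha)$ and forcing $\beta=0$), but this is the standard fact you invoke and does not affect the correctness of the overall argument.
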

\end{minipage}\hfill{}%
\fbox{\begin{minipage}[t]{0.37\columnwidth}%
\[
\begin{tikzpicture}[-,>=to,shorten >=1pt,auto,node distance=1cm,main node/.style=,x=1cm,y=1cm, font=\scriptsize]

   \node[main node] (C) at (0,0)      {$K$};
   \node[main node] (X0) [left of=C]  {$M$};
   \node[main node] (X1) [left of=X0]  {$N$};

   \node[main node] (X) [above of=C]  {$X$};
   \node[main node] (E) [left of=X]  {$E$};
   \node[main node] (X2) [left of=E]  {$N$};

   \node[main node] (03) [right of=C]    {$0$};
   \node[main node] (04) [left of=X1]    {$0$};

   \node[main node] (05) [right of=X]    {$0$};
   \node[main node] (06) [left of=X2]    {$0$};

\draw[->, thick]   (X0)  to node  {$g$}  (C);
\draw[->, thick]   (X1)  to node  {$$}  (X0);
\draw[->, thick]   (C)  to node  {$$}  (03);
\draw[->, thick]   (04)  to node  {$$}  (X1);

\draw[->, thick]   (X)  to node  {$\alpha$}  (C);
\draw[-, double]   (X1)  to node  {$$}  (X2);
\draw[->, thin]   (E)  to node  {$\alpha '$}  (X0);

\draw[->, thin]   (X)  to node  {$$}  (05);
\draw[->, thin]   (E)  to  node  {$g'$}  (X);
\draw[->, thin]   (X2)  to  node   {$$}  (E);
\draw[->, thin]   (06)  to  node   {$$}  (X2);

\end{tikzpicture}
\]%
\end{minipage}}

Of course the construction described above defines a correspondence
between the extension classes.
\begin{prop}
\cite[Corollary 1.2.]{mitchell}\label{cor:caracterizacion accion a izquierda}\label{cor:caracterizacion accion a derecha}
Let $\eta\in\mathcal{E}_{\mathcal{C}}(C,A)$ and $\gamma\in\Hom[\mathcal{C}][C'][C]$.
Then, the correspondence $\Phi_{\gamma}:\Ext[C][1][][A]\rightarrow\Ext[C'][1][][A]$,
$\overline{\eta}\mapsto\overline{\eta_{\gamma}}$, is a function.
\end{prop}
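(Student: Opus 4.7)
The claim is really a well-definedness statement: if $\eta,\eta'\in\mathcal{E}_{\mathcal{C}}(C,A)$ satisfy $\eta\equiv\eta'$, then the pullback extensions $\eta_\gamma$ and $\eta'_\gamma$ produced by the preceding proposition satisfy $\eta_\gamma\equiv\eta'_\gamma$. The plan is to build an extension morphism $(1_A,\phi,1_{C'}):\eta_\gamma\rightarrow\eta'_\gamma$ by invoking the universal property of the pullback defining $\eta'_\gamma$, and then to quote the snake-lemma remark recalled just above to conclude that $\phi$ is automatically an isomorphism.

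Setting up notation, I write $\eta:0\rightarrow A\overset{f}{\rightarrow}M\overset{g}{\rightarrow}C\rightarrow 0$ and $\eta':0\rightarrow A\overset{f'}{\rightarrow}M'\overset{g'}{\rightarrow}C\rightarrow 0$ with witness $(1_A,\beta,1_C)$, so that $\beta f=f'$ and $g'\beta=g$. The preceding proposition applied along $\gamma$ to each of $\eta$ and $\eta'$ yields short exact sequences $\eta_\gamma:0\rightarrow A\overset{j}{\rightarrow}E\overset{h}{\rightarrow}C'\rightarrow 0$ and $\eta'_\gamma:0\rightarrow A\overset{j'}{\rightarrow}E'\overset{h'}{\rightarrow}C'\rightarrow 0$, together with morphisms $\alpha:E\rightarrow M$ and $\alpha':E'\rightarrow M'$ fitting into pullback squares and providing short exact sequence morphisms $(1_A,\alpha,\gamma):\eta_\gamma\rightarrow\eta$ and $(1_A,\alpha',\gamma):\eta'_\gamma\rightarrow\eta'$. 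The key observation is that the pair $(\beta\alpha,h)$ satisfies $g'(\beta\alpha)=g\alpha=\gamma h$, which is exactly the compatibility condition for the pullback defining $E'$; hence there is a unique $\phi:E\rightarrow E'$ with $\alpha'\phi=\beta\alpha$ and $h'\phi=h$. To upgrade $\phi$ to an extension morphism one further needs $\phi j=j'$, and this follows from a second application of the uniqueness clause, since $\alpha'(\phi j)=\beta\alpha j=\beta f=f'=\alpha' j'$ and $h'(\phi j)=hj=0=h' j'$.

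Combining these pieces, $(1_A,\phi,1_{C'}):\eta_\gamma\rightarrow\eta'_\gamma$ is an extension morphism with identities on both ends, and the snake-lemma remark then guarantees that $\phi$ is an isomorphism, so $\overline{\eta_\gamma}=\overline{\eta'_\gamma}$. I do not expect any substantive obstacle; the whole argument is a single pullback universal-property chase, and the only mild point of care is applying the universal property twice---first to produce $\phi$, and then, with $h'$ and $\alpha'$ as probes, to verify compatibility with the distinguished monomorphism from $A$ into the pullback.
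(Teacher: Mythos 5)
Your argument is correct: the well-definedness of $\Phi_{\gamma}$ is exactly a pullback universal-property chase, and both applications of uniqueness (to produce $\phi$ and to verify $\phi j=j'$) are carried out properly, after which the snake-lemma remark closes the loop. The paper itself gives no proof here --- it simply cites Mitchell's Corollary 1.2 --- and your argument is the standard one underlying that reference (it also disposes of the implicit dependence on the choice of pullback, by taking $\beta=1$), so there is nothing to add.
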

By duality, given a morphism $\alpha:N\rightarrow X$ and an exact
sequence
\[
\eta:\:\suc[][][][f]\mbox{,}
\]
the pushout of the morphisms $f$ and $\alpha$, gives us an exact
sequence $\eta^{\alpha}$ together with a morphism $(\alpha,\alpha',1):\eta\rightarrow\eta^{\alpha}$.
Moreover, we also have that the correspondence $\Phi^{\alpha}:\Ext[K][1][][N]\rightarrow\Ext[K][1][][X]$,
$\overline{\eta}\mapsto\overline{\eta^{\alpha}}$, is a function.
\begin{defn}
\cite[Section 1]{mitchell} For $\alpha:A\rightarrow A'$ and $\gamma:C'\rightarrow C$ morphisms in $\mathcal{C}$, 
and $E\in\Ext[C][1][][A]$, we set $E\gamma:=\Phi_{\gamma}(E)$,
and $\alpha E:=\Phi^{\alpha}(E)$.
\end{defn}
As we have described, there exists a natural action of the morphisms
on the extension classes. These actions are associative and respect
identities.
\begin{lem}
\cite[Lemma 1.3]{mitchell}\label{lem:propiedades1} Let $E\in\Ext[C][1][][A]$,
$\alpha:A\rightarrow A'$, $\alpha':A'\rightarrow A''$, $\gamma:C'\rightarrow C$,
and $\gamma':C''\rightarrow C'$ be morphisms in $\mathcal{C}$. Then, 
\begin{enumerate}
\item $1_{A}E=E$ and $E1_{C}=E$;
\item $(\alpha'\alpha)E=\alpha'(\alpha E)$ and $E(\gamma\gamma')=(E\gamma)\gamma'$; 
\item $(\alpha E)\gamma=\alpha(E\gamma)$.
\end{enumerate}
\end{lem}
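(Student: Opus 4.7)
The plan is to prove each of the three parts by exhibiting, for fixed representatives of the extensions involved, a morphism of short exact sequences of the form $(1_{?},\beta,1_{?})$; by the remark preceding Definition~2.4, the snake lemma then forces $\beta$ to be an isomorphism, so the two representatives lie in the same class in $\operatorname{Ext}^{1}$. No further verification is needed once such a morphism is produced.

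For part (a), fix a representative $\eta:\,\suc[A][M][C][f][g]$ of $E$. The pullback of $g$ along $1_{C}$ is, up to the canonical iso, $(M,1_{M},g)$, so the short exact sequence furnished by Proposition~\ref{prop:pb:operar a izquierda} is $\eta$ itself, giving $E\,1_{C}=E$. Dually, the pushout of $f$ along $1_{A}$ yields $(M,f,1_{M})$, so $1_{A}E=E$.

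For part (b), I would invoke the pasting lemma for pullbacks: given consecutive morphisms $\gamma':C''\to C'$ and $\gamma:C'\to C$, pulling back $g$ first along $\gamma$ and then the resulting map along $\gamma'$ yields, up to canonical iso, the pullback of $g$ along $\gamma\gamma'$. This produces an extension morphism $(1_{A},\beta,1_{C''})$ between concrete representatives of $(E\gamma)\gamma'$ and $E(\gamma\gamma')$, settling the corresponding identity of (b); the dual pasting lemma for pushouts gives $(\alpha'\alpha)E=\alpha'(\alpha E)$.

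For part (c), which I expect to be the main obstacle, I would start from $\eta$ and build both sides simultaneously. Form the pullback $(P,\alpha_{M},g')$ of $(g,\gamma)$, producing a representative $0\to A\xrightarrow{k}P\xrightarrow{g'}C'\to 0$ of $E\gamma$; then push out $k$ along $\alpha$ to obtain $\alpha(E\gamma):\,0\to A'\to Q\to C'\to 0$. Symmetrically, form the pushout $(N,\beta_{M},f')$ of $(f,\alpha)$, producing $\alpha E:\,0\to A'\to N\xrightarrow{\pi_{N}}C\to 0$, and pull back $\pi_{N}$ along $\gamma$ to obtain $(\alpha E)\gamma$ with middle term $P'$. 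The universal property of $P'$, applied to the pair $(\beta_{M}\alpha_{M},g')$ (which commute over $C$ since $\pi_{N}\beta_{M}\alpha_{M}=g\alpha_{M}=\gamma g'$), yields a canonical morphism $P\to P'$; a routine diagram chase shows that it agrees on $A$ with the composite $A\xrightarrow{\alpha}A'\hookrightarrow P'$. The pushout property of $Q$ then produces a unique $\tau:Q\to P'$ fitting into an extension morphism $(1_{A'},\tau,1_{C'}):\alpha(E\gamma)\to(\alpha E)\gamma$, and the snake lemma completes the proof.
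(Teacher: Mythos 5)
Your argument is correct, but note that the paper does not actually prove this lemma: it is recalled verbatim from Mitchell with the citation \cite[Lemma 1.3]{mitchell}, so there is no in-text proof to match against. Parts (a) and (b) are handled exactly as one would expect (uniqueness of pullbacks/pushouts up to canonical isomorphism, and the pasting lemma for composable pullback squares, plus their duals), and your verification for (c) is sound: the map $P\to P'$ exists because $\pi_{N}\beta_{M}\alpha_{M}=g\alpha_{M}=\gamma g'$, it restricts on kernels to $\alpha$ because $\alpha_{M}k=f$ and $\beta_{M}f=f'\alpha$, and the induced $\tau:Q\to P'$ is then an equivalence with fixed ends by the snake lemma. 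The one remark worth making is that your explicit construction in (c) is a hands-on unpacking of the factorization lemma that the paper does state later as Lemma \ref{lem:pb/po}: applying that lemma to the composite morphism of short exact sequences $(\alpha,\beta_{M}\alpha_{M},\gamma):\eta_{\gamma}\rightarrow\eta^{\alpha}$ (obtained by composing $(1,\alpha_{M},\gamma):\eta_{\gamma}\rightarrow\eta$ with $(\alpha,\beta_{M},1):\eta\rightarrow\eta^{\alpha}$) yields $\eta^{\alpha}\gamma=\alpha\eta_{\gamma}$ in one line. Your route costs more diagram chasing but is self-contained and does not presuppose that factorization result, which is a reasonable trade-off given that the paper states Lemma \ref{lem:pb/po} only after this one.
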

Next, we recall the definition of the Baer sum.
\begin{defn}
\cite[Section 1]{mitchell} For $E,E'\in\Ext[C][1][][A]$, the sum extension
of $E$ and $E'$ is $E+E':=\nabla_{A}\left(E\oplus E'\right)\Delta_{C}\mbox{.}$
\end{defn}
This sum operation is well behaved with the actions before described
and gives a structure of abelian group to the extension classes.
\begin{thm}
\label{thm:ext1 es grupo}\label{cor:0E=00003D0}\label{lem:propiedades2}\cite[Lemma 1.4 and Theorem 1.5.]{mitchell}
For any $A,C\in\mathcal{C}$, we have that the pair $\left(\Ext[C][1][][A],+\right)$
is an abelian group, where the identity element is the extension 
given by the class of exact sequences that split. Furthermore, let $E\in\Ext[C][1][][A]$,
$E'\in\Ext[C'][1][][A']$, $\alpha\in\Hom[][A][X]$, $\alpha'\in\Hom[][A'][X']$,
$\gamma\in\Hom[][Y][C]$ and $\gamma'\in\Hom[][Y'][C']$. Then, the
following equalities hold true:
\begin{enumerate}
\item $\left(\alpha\oplus\alpha'\right)\left(E\oplus E'\right)=\alpha E\oplus\alpha'E'$;
\item $\left(\alpha+\alpha'\right)E=\alpha E+\alpha'E$;
\item $\alpha\left(E+E'\right)=\alpha E+\alpha E'$;
\item $\left(E\oplus E'\right)\left(\gamma\oplus\gamma'\right)=E\gamma\oplus E'\gamma'$;
\item $E\left(\gamma+\gamma'\right)=E\gamma+E\gamma'$; 
\item $\left(E+E'\right)\gamma=E\gamma+E'\gamma$;
\item $0E=E0=E_{0}$ for every $E\in\Ext[C][1][][A]$.
\end{enumerate}
\end{thm}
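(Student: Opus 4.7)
The plan is to show that $(\operatorname{Ext}^1(C,A),+)$ is an abelian group with identity the class of split exact sequences, and to deduce the seven bilinearity identities, by first extracting four structural identities from the universal properties of pullbacks, pushouts, and biproducts, and then deriving everything else formally using Lemma \ref{lem:propiedades1}.

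The four identities I would establish first are, for all extensions $E, E'$ and composable morphisms $\alpha,\alpha',\gamma,\gamma'$: (A) $(\alpha \oplus \alpha')(E \oplus E') = \alpha E \oplus \alpha' E'$; (B) $(E \oplus E')(\gamma \oplus \gamma') = E\gamma \oplus E'\gamma'$; (C) $\Delta_A \, E = (E \oplus E)\Delta_C$; and (D) $E\,\nabla_C = \nabla_A(E \oplus E)$. Statements (A) and (B) express that a finite direct sum of pushout (resp.\ pullback) squares is again a pushout (resp.\ pullback) square, a standard fact in any abelian category. For (C) and (D) I would construct an explicit equivalence morphism between the two short exact sequences directly from the universal property of the pushout (resp.\ pullback). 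Well-definedness of $+$, $\oplus$, and the two actions on equivalence classes is a snake lemma argument applied to these same constructions.

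With (A)--(D) and Lemma \ref{lem:propiedades1} in hand, the seven identities reduce to routine substitutions. For property (2),
\[
(\alpha+\alpha')E \;=\; \nabla_X(\alpha \oplus \alpha')\Delta_A\,E \;=\; \nabla_X(\alpha \oplus \alpha')(E \oplus E)\Delta_C \;=\; \nabla_X(\alpha E \oplus \alpha' E)\Delta_C \;=\; \alpha E + \alpha' E,
\]
where the second equality is (C) and the third is (A). Properties (3), (5), (6) follow by symmetric substitutions; (1) is precisely (A), (4) is (B), and (7) follows by specialising (2) (or (5)) to $\alpha = 0$ together with the observation that the pushout of $E$ along the zero map splits. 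The abelian group axioms then fall out: commutativity via the twist $\tau_A : A \oplus A \to A \oplus A$ satisfying $\nabla_A \tau_A = \nabla_A$, $\tau_A \Delta_A = \Delta_A$ and $\tau_A(E \oplus E')\tau_C^{-1} = E' \oplus E$; associativity from the coherence of the biproduct together with triple diagonal/codiagonal analogues of (C), (D); the split extension $E_0$ as identity from the direct calculation $E + E_0 = \nabla_A(E \oplus E_0)\Delta_C = E$; and inverses by defining $-E := (-1_A)E$ and invoking property (2).

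The main obstacle is the verification of the diagonal identities (C) and (D): they encode the compatibility between the left (pushout) and right (pullback) actions through the biproduct structure, and must be built by hand from the universal properties. Once they are in place no further subtlety arises, and the whole argument goes through in the abelian category $\mathcal{C}$ without invoking any projective or injective object, which is precisely what the paper needs for its subsequent treatment of the Yoneda Ext.
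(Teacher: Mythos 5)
The paper offers no proof of this theorem---it is quoted from Mitchell [Lemma 1.4 and Theorem 1.5]---and your argument is precisely the standard one given there (and in Mac Lane's \emph{Homology}): establish distributivity of $\oplus$ over the two actions together with the diagonal/codiagonal identities $\Delta_A E=(E\oplus E)\Delta_C$ and $E\nabla_C=\nabla_A(E\oplus E)$, and then derive the group axioms and the seven equalities formally. The sketch is correct; the one point to state carefully is that (7) must rest entirely on the direct observation that pushing out (or pulling back) along a zero morphism yields a split sequence, since specialising (2) to $\alpha=\alpha'=0$ only gives $0E=0E+0E$, which is circular before the group structure is in place---but you do supply that direct observation.
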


\subsection{$n$-Extensions}

We are ready for recalling the definition of $n$-extensions. It is
a well known fact that short exact sequences can be sticked together
in order to contruct a long exact sequence. Following this thought,
the spirit of $n$-extensions is to define a well behaved 1-extensions
composition that constructs long extensions.
\begin{defn}
\cite[Section 3]{mitchell} We will make use of the following considerations. 
\begin{enumerate}
\item For an exact sequence $\eta:\;\suc[A][B_{n-1}\rightarrow\cdots\rightarrow B_{0}][C]$
in $\mathcal{C}$ we say that $\eta$ is an exact sequence of length
$n$, and $A$ and $C$ are the left and right ends of $\eta$, respectively. 
\item Let $\mathcal{E}_{\mathcal{C}}^{n}(L,N)$ denote the class of exact
sequences of length $n$ with $L$ and $N$ as right and left ends.
\item Consider the following exact sequences in $\mathcal{C}$ 
\begin{alignat*}{1}
\eta:\;\suc[N][B_{n-1}\stackrel{f_{n-1}}{\rightarrow}\cdots\stackrel{f_{1}}{\rightarrow}B_{0}][K][\mu][\pi] & \mbox{,}\\
\eta':\;\suc[N'][B'_{n-1}\stackrel{f'_{n-1}}{\rightarrow}\cdots\stackrel{f'_{1}}{\rightarrow}B'_{0}][K'][\mu'][\pi'] & \mbox{.}
\end{alignat*}
 A morphism $\eta\rightarrow$$\eta'$ is a collection of $n+2$ morphisms
$(\alpha,\beta_{n-1},\cdots,\beta_{0},\gamma)$ in $\mathcal{C}$,
where $\alpha:N\rightarrow N'$, $\gamma:K\rightarrow K'$, and $\beta_{i}:B_{i}\rightarrow B'_{i}\:\forall i\in[0,n-1]$
are such that 
\[
\beta_{n-1}\mu=\mu'\alpha\mbox{, }\gamma\pi=\pi'\beta_{0}\mbox{ and }\beta_{i-1}f_{i}=f_{i}'\beta_{i}\:\forall i\in[0,n-1]\mbox{.}
\]
Equivalently, we can say that a morphism of exact sequences of length
$n$ is a commutative diagram
\end{enumerate}

\begin{minipage}[t]{1\columnwidth}%
\[
\begin{tikzpicture}[-,>=to,shorten >=1pt,auto,node distance=1cm,main node/.style=,framed, font=\scriptsize]

   \node[main node] (01) at (0,0)       {$0$};
   \node[main node] (N) [right of=01]   {$N$};
   \node[main node] (B1) [right of=N]   {$B_{n-1}$};
   \node[main node] (C) [right of=B1]   {$\cdots$};
   \node[main node] (B2) [right of=C]   {$B_0$};
   \node[main node] (K) [right of=B2]   {$K$};
   \node[main node] (02) [right of=K]   {$0$};

   \node[main node] (03) [below of=01]       {$0$};
   \node[main node] (N') [right of=03]   {$N'$};
   \node[main node] (B'1) [right of=N']   {$B'_{n-1}$};
   \node[main node] (C') [right of=B'1]   {$\cdots$};
   \node[main node] (B'2) [right of=C']   {$B'_0$};
   \node[main node] (K') [right of=B'2]   {$K'$};
   \node[main node] (04) [right of=K']   {$0$};

\draw[->, thin]   (01)  to node    {$$}    (N);
\draw[->, thin]   (N)  to node    {$$}      (B1);
\draw[->, thin]   (B1)  to node    {$$}      (C);
\draw[->, thin]   (C)  to node    {$$}      (B2);
\draw[->, thin]   (B2)  to node    {$$}    (K);
\draw[->, thin]   (K)  to node    {$$}    (02);

\draw[->, thin]   (03)  to node    {$$}    (N');
\draw[->, thin]   (N')  to node    {$$}      (B'1);
\draw[->, thin]   (B'1)  to node    {$$}      (C');
\draw[->, thin]   (C')  to node    {$$}      (B'2);
\draw[->, thin]   (B'2)  to node    {$$}    (K');
\draw[->, thin]   (K')  to node    {$$}    (04);

\draw[->, thin]   (N)  to node    {$$}      (N');
\draw[->, thin]   (B1)  to node    {$$}      (B'1);
\draw[->, thin]   (B2)  to node    {$$}    (B'2);
\draw[->, thin]   (K)  to node    {$$}    (K');

\end{tikzpicture}
\]%
\end{minipage}

\end{defn}
In the following lines, we define an equivalence relation for studying
the classes of exact sequences of length $n$. As we did for the case
with $n=1$, we start by saying that two exact sequences $\eta,\eta'\in\mathcal{E}_{\mathcal{C}}^{n}(C,A)$
are related, denoted  by $\eta\preceq\eta'$, if there is
a morphism $(1_{A},\beta_{n-1}\cdots,\beta_{0},1_{C}):\eta\rightarrow\eta'\mbox{.}$
In this case, we say also that this morphism has fixed ends. Observe
that, in contrast with the case $n=1$, this relation needs not to
be symmetric. Thus, for achieving our goal, we most consider the equivalence
relation $\equiv$ induced by $\preceq$. Namely, we write $\eta\equiv\eta'$
if there are exact sequences $\eta_{1},\cdots,\eta_{k}$ such that
\[
\eta=\eta_{1}\mbox{,}\qquad\eta_{i}\preceq\eta_{i+1}\mbox{ or }\eta_{i+1}\preceq\eta_{i},\mbox{ and }\qquad\eta'=\eta_{k}\mbox{.}
\]

\begin{defn}
\cite[Section 9]{Hilton-Stammbach} For $n\geq1$ and $A,C\in\mathcal{C}$, 
we consider the class $\Ext[C][n][][A]:=\mathcal{E}_{\mathcal{C}}^{n}(C,A)/\equiv\mbox{,}$
whose elements will be called extensions of length $n$ with $C$
and $A$ as right and left ends. Let $\overline{\eta}$ denote the
equivalence class of $\eta\in\mathcal{E}_{\mathcal{C}}^{n}(C,A)$.
An extension morphism from $\overline{\eta}$ to $\overline{\eta'}$
is just a morphism from $\eta$ to $\eta'$. \end{defn}
\begin{rem}
The definition of the equivalence relation above might seem naive.
But actually the relation is built with the purpose of making the
composition of extensions associate properly when there is a morphism
acting in the involved extensions \cite[Section 3]{mitchell}\cite[Section 5]{maclanehomology}.
In the following lines, we will discuss briefly such matter. 

Observe how in general, for $\eta\in\mathcal{E}_{\mathcal{C}}^{1}(C,A)$,
$\eta'\in\mathcal{E}_{\mathcal{C}}^{1}(D,C')$ and  $\beta:C\rightarrow C'$ in $\mathcal{C}$,
it is false that $\left(\eta\beta\right)\eta'=\eta\left(\beta\eta'\right)$.
The only affirmation that can be made is that there is an extension
morphism $\left(\eta\beta\right)\eta'\rightarrow\eta\left(\beta\eta'\right)$.
To show such morphism, we remember that $\beta$ induces morphisms $\eta\beta\rightarrow\eta\qquad$ and $\qquad\eta'\rightarrow\beta\eta'\mbox{.}$
Hence, we can build the morphisms 
\[
\left(\eta\beta\right)\eta'\rightarrow\eta\eta'\qquad\mbox{and}\qquad\eta\eta'\rightarrow\eta\left(\beta\eta'\right)\mbox{,}
\]
whose composition gives the wanted morphism. Therefore, even if we have the inequality $\left(\eta\beta\right)\eta'\neq\eta\left(\beta\eta'\right)$
we can conclude that $\overline{\left(\eta\beta\right)\eta'}=\overline{\eta\left(\beta\eta'\right)}$.\end{rem}
\begin{defn}
\cite[Section 3]{mitchell} Consider the following exact sequences of
length $n$ and $m$, respectively 
\begin{alignat*}{1}
\eta:\;\suc[N][B_{n}\rightarrow\cdots\rightarrow B_{1}][K][\mu][\pi] & \mbox{,}\\
\eta':\;\suc[K][B'_{m}\rightarrow\cdots\rightarrow B'_{1}][L][\mu'][\pi'] & .
\end{alignat*}

The composition sequence $\eta\eta'$, of $\eta$ with $\eta'$, is
the exact sequence
\[
\suc[N][B_{n}\rightarrow\cdots\rightarrow B_{1}\stackrel{\mu'\pi}{\rightarrow}B'_{m}\rightarrow\cdots\rightarrow B'_{1}][L][\mu][\pi']\mbox{.}
\]
\end{defn}
\begin{rem}
Note that each exact sequence in $\mathcal{C}$ 
\[
\kappa:\:\suc[A][B_{n}\rightarrow\cdots\rightarrow B_{1}][C]
\]
can be written as a composition of $n$ short exact sequences $\kappa=\eta_{n}\cdots\eta_{1}$,
where 
\[
\eta_{i}:=\;\suc[K_{i+1}][B_{i}][K_{i}]\mbox{,}
\]
with $K_{n+1}:=A$, $K_{1}:=C$ and $K_{i}=\im[(B_{i}\rightarrow B_{i-1})]\:\forall i\in[2,n-1]$.
We will refer to such factorization of $\kappa$ as its natural decomposition.
\end{rem}
Of course, the composition of exact sequences induces a composition
of extensions.
\begin{lem}
\cite[Proposition 3.1]{mitchell}\label{lem:composicion esta bien definido}
Let $m,n>0$, and $A,C,D\in\mathcal{C}$. Then, the correspondence
$\Phi:\Ext[C][n][][A]\times\Ext[D][m][][C]\rightarrow\Ext[D][n+m][][A]$,
$(\overline{\eta},\overline{\eta'})\mapsto\overline{\eta\eta'}$,
is a function.
\end{lem}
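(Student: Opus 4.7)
The goal is to show that the composition $\overline{\eta}\cdot\overline{\eta'}:=\overline{\eta\eta'}$ does not depend on the choice of representatives. Fix $\overline{\eta}\in\Ext[C][n][][A]$ and $\overline{\eta'}\in\Ext[D][m][][C]$. My plan is to reduce the problem to two separate claims:
\begin{itemize}
\item[(L)] if $\eta\preceq\eta_{1}$ in $\mathcal{E}_{\mathcal{C}}^{n}(C,A)$, then $\eta\eta'\preceq\eta_{1}\eta'$ in $\mathcal{E}_{\mathcal{C}}^{n+m}(D,A)$;
\item[(R)] if $\eta'\preceq\eta'_{1}$ in $\mathcal{E}_{\mathcal{C}}^{m}(D,C)$, then $\eta\eta'\preceq\eta\eta'_{1}$ in $\mathcal{E}_{\mathcal{C}}^{n+m}(D,A)$.
\end{itemize}
Granting (L) and (R), the result follows by a straightforward induction on the length of a chain $\eta=\kappa_{1},\kappa_{2},\ldots,\kappa_{k}=\eta_{1}$ witnessing $\eta\equiv\eta_{1}$ (and similarly for $\eta'\equiv\eta'_{1}$), plus transitivity of $\equiv$ to combine the two sides into $\eta\eta'\equiv\eta_{1}\eta_{1}'$.

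To prove (L), write $\eta$ as $\suc[A][B_{n}\to\cdots\to B_{1}][C][\mu][\pi]$ and $\eta_{1}$ as $\suc[A][B_{n}^{(1)}\to\cdots\to B_{1}^{(1)}][C][\mu_{1}][\pi_{1}]$, and let $(1_{A},\beta_{n},\ldots,\beta_{1},1_{C}):\eta\to\eta_{1}$ be a morphism with fixed ends. Write $\eta'$ as $\suc[C][B'_{m}\to\cdots\to B'_{1}][D][\mu'][\pi']$. I would then simply define
\[
(1_{A},\beta_{n},\ldots,\beta_{1},1_{B'_{m}},\ldots,1_{B'_{1}},1_{D}):\;\eta\eta'\longrightarrow\eta_{1}\eta'\mbox{.}
\]
All squares away from the join coincide with the original data; the only thing to check is commutativity at the junction, namely $\mu'\pi_{1}\beta_{1}=1_{B'_{m}}\cdot\mu'\pi$. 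This reduces to $\pi_{1}\beta_{1}=\pi$, which is exactly the rightmost square of $(1_{A},\beta_{n},\ldots,\beta_{1},1_{C}):\eta\to\eta_{1}$ (the fixed-right-end condition). Hence $\eta\eta'\preceq\eta_{1}\eta'$. Claim (R) is proved in the dual way: given $(1_{C},\beta'_{m},\ldots,\beta'_{1},1_{D}):\eta'\to\eta'_{1}$, take the morphism whose first $n$ components are identities and whose last $m+2$ components are those of the given morphism; commutativity at the junction becomes $\mu'_{1}\pi=\beta'_{m}\mu'\pi$, which follows from the fixed-left-end condition $\mu'_{1}=\beta'_{m}\mu'$ of the morphism $\eta'\to\eta'_{1}$.

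The proof is mostly bookkeeping, and I do not anticipate a genuine obstacle. The only delicate point is recognising that one cannot ask for $\eta\eta'=\eta_{1}\eta_{1}'$ or even for a direct single morphism with fixed ends between arbitrary representatives; one has to pass through the symmetric closure of $\preceq$, which is precisely why the definition of $\equiv$ is forced to be the equivalence relation generated by $\preceq$ (as emphasized in the preceding remark). Once this conceptual point is kept in view, the proof consists of verifying the single commutative square at the junction in (L) and (R) and then stringing the chains together.
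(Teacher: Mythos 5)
Your proof is correct and is essentially the standard argument: the paper itself gives no proof of this lemma, simply citing Mitchell's Proposition 3.1, and your reduction to the two one-sided claims (L) and (R) followed by the chain/transitivity argument through the equivalence relation generated by $\preceq$ is exactly how that result is established. The key verification --- commutativity of the single square at the junction, which you correctly derive from the fixed-right-end condition $\pi_{1}\beta_{1}=\pi$ in (L) and the fixed-left-end condition $\mu'_{1}=\beta'_{m}\mu'$ in (R) --- is carried out correctly.
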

We can now define without ambiguity the composition of extensions.
\begin{defn}
Let $E\in\Ext[C][n][][A]$ and $E'\in\Ext[D][m][][C]$. For $E=\overline{\eta}$
and $E'=\overline{\eta'}$, we define the composition extension $EE'$
of $E$ with $E'$, as the extension $EE':=\overline{\eta\eta'}$. If $\eta=\eta_{n}\cdots\eta_{1}$
is the natural decomposition of $\eta$, the induced extension factorization
$E=\overline{\eta_{n}}\cdots\overline{\eta_{1}}$ is known as a natural
decomposition of $E$.
\end{defn}
In the same way, an $n$-extension can be factored into simpler extensions;
a morphism of $n$-extensions can be factored into a composition of
$n$ simpler morphisms. The next lemma shows the basic fact in this
matter.\\
\begin{minipage}[t]{0.6\columnwidth}%
\begin{lem}
\cite[Lemma 1.1]{mitchell}\label{lem:pb/po}\label{cor:aE=00003DE'b}
Consider a morphism of exact sequences  $(\alpha,\beta,\gamma):\eta'\rightarrow\eta$, with 
\begin{alignat*}{1}
\eta: & \quad\suc[A][B][C][f][g]\mbox{ and }\\
\eta': & \quad\suc[A'][B'][C'][f'][g']\mbox{.}
\end{alignat*}
Then, $\eta\gamma=\alpha\eta'$ and $(\alpha,\beta,\gamma)$ factors
through $\eta\gamma$ as
\[
(\alpha,\beta,\gamma)=(1,\beta',\gamma)(\alpha,\beta'',1)\mbox{.}
\]
\end{lem}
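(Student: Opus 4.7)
My plan is to realise $\eta\gamma$ concretely via the pullback of $g$ along $\gamma$ and $\alpha\eta'$ via the pushout of $f'$ along $\alpha$, and then exploit the universal properties of these constructions.

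Write $\eta\gamma:\; 0\to A\stackrel{\iota}{\to}E\stackrel{q}{\to}C'\to 0$, where $E$ is the pullback of $g$ along $\gamma$ with projections $p:E\to B$ and $q:E\to C'$ (so that $gp=\gamma q$) and $\iota:A\to E$ is induced by the pair $(f,0)$. By Proposition~\ref{cor:caracterizacion accion a derecha} this is a representative of $\eta\gamma$ and comes with a canonical morphism $(1_A,p,\gamma):\eta\gamma\to\eta$; I take $\beta':=p$. The hypothesis $(\alpha,\beta,\gamma):\eta'\to\eta$ gives $g\beta=\gamma g'$, so the pullback universal property supplies a unique $\beta'':B'\to E$ with $p\beta''=\beta$ and $q\beta''=g'$. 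To see that $(\alpha,\beta'',1_{C'}):\eta'\to\eta\gamma$ is a morphism of exact sequences I only need $\beta''f'=\iota\alpha$; this follows by postcomposing with $p$ and $q$, using $\beta f'=f\alpha$ and $g'f'=0$. The factorisation is then a one-line computation: $(1_A,p,\gamma)(\alpha,\beta'',1_{C'})=(\alpha,p\beta'',\gamma)=(\alpha,\beta,\gamma)$.

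For the equality $\eta\gamma=\alpha\eta'$ in $\operatorname{Ext}^{1}(C',A)$, I would construct $\alpha\eta'$ dually as the pushout $E''$ of $f'$ along $\alpha$, with structure morphism $(\alpha,\alpha'',1_{C'}):\eta'\to\alpha\eta'$. The maps $\iota:A\to E$ and $\beta'':B'\to E$ satisfy $\iota\alpha=\beta''f'$, so the pushout universal property produces a unique $\theta:E''\to E$ that assembles into a morphism $(1_A,\theta,1_{C'}):\alpha\eta'\to\eta\gamma$. The snake lemma (as in the remark following Definition~2.3) forces $\theta$ to be an isomorphism, hence $\overline{\alpha\eta'}=\overline{\eta\gamma}$.

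The main obstacle is essentially bookkeeping: one has to chase several small diagrams to confirm that the universally defined maps fit together as required, and both original hypotheses $\beta f'=f\alpha$ and $\gamma g'=g\beta$ must be invoked when checking that $(\alpha,\beta'',1_{C'})$ is a genuine morphism of short exact sequences. Once those verifications are made, everything else is mechanical.
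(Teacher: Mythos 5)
Your proof is correct and follows the standard route — realising $\eta\gamma$ as the pullback of $g$ along $\gamma$, obtaining $\beta''$ from the pullback's universal property, and comparing with the pushout construction of $\alpha\eta'$ — which is exactly the argument behind the cited \cite[Lemma 1.1]{mitchell} and the factorisation diagram displayed beside the lemma. All the verifications you flag (joint monicity of the pullback projections for $\beta''f'=\iota\alpha$, joint epicity of the pushout injections for the right-hand square of $(1,\theta,1)$, and the snake lemma making $\theta$ invertible) go through as you describe.
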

\end{minipage}\hfill{}%
\fbox{\begin{minipage}[t]{0.35\columnwidth}%
\[
\begin{tikzpicture}[-,>=to,shorten >=1pt,auto,node distance=1cm,main node/.style=, font=\scriptsize]

   \node[main node] (I1)   at (0,0)            {$0$};
   \node[main node] (A1)  [right of=I1]        {$A'$};
   \node[main node] (B1)  [right of=A1]         {$B'$};
   \node[main node] (C1)  [right of=B1]         {$C'$};
   \node[main node] (D1) [right of=C1]         {$0$};

   \node[main node] (I2)  [below of=I1]         {$0$};
   \node[main node] (A2)  [below of=A1]         {$A$};
   \node[main node] (B2)  [below of=B1]         {$E$};
   \node[main node] (C2)  [below of=C1]         {$C'$};
   \node[main node] (D2)  [below of=D1]         {$0$};

   \node[main node] (I3)  [below of=I2]         {$0$};
   \node[main node] (A3)  [below of=A2]         {$A$};
   \node[main node] (B3)  [below of=B2]         {$B$};
   \node[main node] (C3)  [below of=C2]         {$C$};
   \node[main node] (D3)  [below of=D2]         {$0$};

\draw[->, thin]   (I1)  to  node     {$$}         (A1);
\draw[->, thin]   (A1)  to  node     {$f'$}         (B1);
\draw[->, thin]   (B1)  to  node     {$g'$}         (C1);
\draw[->, thin]   (C1)  to  node     {$$}         (D1);

\draw[->, thin]   (I2)  to  node     {$$}         (A2);
\draw[->, thin]   (A2)  to  node     {$$}         (B2);
\draw[->, thin]   (B2)  to  node     {$$}         (C2);
\draw[->, thin]   (C2)  to  node     {$$}         (D2);

\draw[->, thin]   (I3)  to  node     {$$}         (A3);
\draw[->, thin]   (A3)  to  node     {$f$}         (B3);
\draw[->, thin]   (B3)  to  node     {$g$}         (C3);
\draw[->, thin]   (C3)  to  node     {$$}         (D3);

\draw[->, thin]   (A1)  to  node     {$\alpha$}         (A2);
\draw[->, thin]   (B1)  to  node     {$\beta ''$}         (B2);
\draw[-, double]   (C1)  to  node     {$$}         (C2);

\draw[-, double]   (A2)  to  node     {$$}         (A3);
\draw[->, thin]   (B2)  to  node     {$\beta '$}         (B3);
\draw[->, thin]   (C2)  to  node     {$\gamma $}         (C3);

\end{tikzpicture}
\]%
\end{minipage}}

In general, we can make the following affirmation.
\begin{cor}
\label{cor:asociatividad con morfismos}Let $\eta,\eta'\in\mathcal{E}_{\mathcal{C}}^{n}(C,A)$
be exact sequences with natural decompositions $\eta=\eta_{n}\cdots\eta_{1}$
and $\eta'=\eta'_{n}\cdots\eta'_{1}$. Then, the following statements
hold true.
\begin{enumerate}
\item There is an exact sequence morphism $(\alpha,\beta_{n-1},\cdots,\beta_{0},\gamma):\eta\rightarrow\eta'$
if, and only if, there is a collection of extension morphisms 
\[
(\alpha_{i},\beta_{i-1},\alpha_{i-1}):\overline{\eta{}_{i}}\rightarrow\overline{\eta'_{i}}\:\forall i\in[1,n]
\]
where $\alpha_{n}=\alpha$ and $\alpha_{0}=\gamma$.
\item If there is an exact sequence morphism $(\alpha,\beta_{n-1},\cdots,\beta_{0},\gamma):\eta\rightarrow\eta'$,
then there is a collection of morphisms $\alpha_{n-1},\cdots,\alpha_{1}$
in $\mathcal{C}$ satisfying the following equalities:

\begin{enumerate}
\item $\overline{\eta'_{n}}\cdots\overline{\eta'_{1}}\gamma=\alpha\overline{\eta_{n}}\cdots\overline{\eta_{1}}$,
\item $\overline{\eta'_{i}}\cdots\overline{\eta'_{1}}\gamma=\alpha_{i}\overline{\eta_{i}}\cdots\overline{\eta_{1}}\:\forall i\in[1,n-1]$, and
\item $\overline{\eta'_{n}}\cdots\overline{\eta'_{i+1}}\alpha_{i}=\alpha\overline{\eta_{n}}\cdots\overline{\eta_{i+1}}\:\forall i\in[1,n-1]$. 
\end{enumerate}
\end{enumerate}
\end{cor}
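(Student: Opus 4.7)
The plan for (a) is to move between the global length-$n$ morphism and the short extension morphisms via the natural decomposition. I would write the natural decompositions $\eta=\eta_{n}\cdots\eta_{1}$ and $\eta'=\eta'_{n}\cdots\eta'_{1}$ with intermediate objects $K_{i}$ and $K'_{i}$ (the images $\mathrm{Im}(B_{i}\to B_{i-1})$ in $\eta$, and analogously in $\eta'$), declaring $K_{n}:=A$, $K_{0}:=C$, $\alpha_{n}:=\alpha$ and $\alpha_{0}:=\gamma$. For the forward implication, given $(\alpha,\beta_{n-1},\ldots,\beta_{0},\gamma):\eta\to\eta'$, I would define each intermediate $\alpha_{i}:K_{i}\to K'_{i}$ (for $1\leq i\leq n-1$) by the universal property of the image factorization: the composite $B_{i}\to B_{i-1}\to B'_{i-1}$ factors through $K'_{i}$, and its restriction to the image $K_{i}$ produces the unique such $\alpha_{i}$. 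Commutativity of the individual squares of the original diagram then immediately packages $(\alpha_{i},\beta_{i-1},\alpha_{i-1})$ as a morphism of the corresponding short exact sequence. For the backward implication, I would simply stack the middle maps $\beta_{i-1}$; adjacent squares of the length-$n$ diagram commute because each short extension morphism already guarantees commutativity of its own two squares, and the shared-end conditions $\alpha_{n}=\alpha$, $\alpha_{0}=\gamma$ are exactly the gluing data needed.

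For (b), the plan is to apply Lemma \ref{lem:pb/po} to each $(\alpha_{i},\beta_{i-1},\alpha_{i-1}):\eta_{i}\to\eta'_{i}$ obtained from part (a), yielding the building-block identity $\overline{\eta'_{i}}\,\alpha_{i-1}=\alpha_{i}\,\overline{\eta_{i}}$. Then (b2) follows by upward induction on $i$: the base case $i=1$ is $\overline{\eta'_{1}}\gamma=\overline{\eta'_{1}}\alpha_{0}=\alpha_{1}\overline{\eta_{1}}$, and the inductive step reads
\[
\overline{\eta'_{i+1}}\cdots\overline{\eta'_{1}}\gamma=\overline{\eta'_{i+1}}\bigl(\alpha_{i}\overline{\eta_{i}}\cdots\overline{\eta_{1}}\bigr)=\bigl(\overline{\eta'_{i+1}}\alpha_{i}\bigr)\overline{\eta_{i}}\cdots\overline{\eta_{1}}=\alpha_{i+1}\overline{\eta_{i+1}}\cdots\overline{\eta_{1}}.
\]
A symmetric downward induction on $i$ (starting from $\overline{\eta'_{n}}\alpha_{n-1}=\alpha\,\overline{\eta_{n}}$) gives (b3), and (b1) is exactly the $i=n$ case of the (b2) chain, since $\alpha_{n}=\alpha$.

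The main delicate point is the one flagged in the remark just before the formal definition of composition of extensions: composition of exact sequences is only associative up to equivalence, and it interacts with the left/right action of morphisms only up to a noncanonical morphism. Every bracketing and every passage of an $\alpha_{i}$ through consecutive factors carried out in the induction above is therefore legitimate only at the level of equivalence classes $\overline{\,\cdot\,}$, which is precisely the level at which the corollary is stated; this is where Lemma \ref{lem:composicion esta bien definido} (well-definedness of composition on classes) silently does the real work. Beyond this bookkeeping, part (a) is a plain invocation of the universal property of the image in an abelian category, and part (b) is a purely formal iteration of the single identity $\overline{\eta'_{i}}\alpha_{i-1}=\alpha_{i}\overline{\eta_{i}}$ supplied by Lemma \ref{lem:pb/po}.
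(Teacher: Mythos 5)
Your proposal is correct and follows essentially the same route as the paper, whose entire proof is the single line ``It follows from \ref{lem:pb/po}'': part (a) is the standard induced maps on images, and part (b) iterates the identity $\overline{\eta'_{i}}\alpha_{i-1}=\alpha_{i}\overline{\eta_{i}}$ supplied by that lemma, with the bracket-shuffling justified at the level of equivalence classes exactly as you note. Your write-up simply makes explicit the steps the paper leaves implicit.
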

\begin{proof}
It follows from \ref{lem:pb/po}.
\end{proof}
By Lemma \ref{lem:composicion esta bien definido}, the following
actions are well defined.
\begin{defn}
\cite[Section 3]{mitchell} Consider $\eta,\eta'\in\mathcal{E}_{\mathcal{C}}^{n}(C,A)$,
$E:=\overline{\eta}\in\Ext[C][n][][A]$, $E':=\overline{\eta'}\in\Ext[C][n][][A]$, and let 
$\eta=\eta_{n}\cdots\eta_{1}$ and $\eta'=\eta'_{n}\cdots\eta'_{1}$
be the natural decompositions of $\eta$ and $\eta'$.
\begin{enumerate}
\item Given $\alpha\in\Hom[][A][A']$, we define $\alpha E:=\alpha\overline{\eta_{n}}\cdots\overline{\eta_{1}}\mbox{.}$
\item Given $\gamma\in\Hom[][C'][C]$, we define $E\gamma:=\overline{\eta_{n}}\cdots\overline{\eta_{1}}\gamma\mbox{.}$
\item We define the sum of extensions of length $n$ in the following way
\[
E+E':=\nabla_{A}\left(E\oplus E'\right)\Delta_{C}\mbox{.}
\]

\end{enumerate}
\end{defn}
Most of the properties, proved earlier for extensions of length 1,
can be naturally extended, as can be seen in the following lines.
\begin{cor}
\cite[Lemma 3.2 an Theorem 3.3]{mitchell}\label{cor:props comps extn}\label{thm:extn es grupo}
Let $n>0$.
\begin{enumerate}
\item Let  $E\in\Ext[C][n][][A]$, $E'\in\Ext[D][m][][C']$, $\beta\in\Hom[][C'][C]$,
$\beta'\in\Hom[][C''][C']$, $\alpha\in\Hom[][A][A']$, and $\alpha'\in\Hom[][A'][A'']$.
Then the following equalities hold true: 

\begin{enumerate}
\item $\left(E\beta\right)E'=E\left(\beta E'\right)$;
\item $1_{A}E=E=E1_{C}$;
\item $E\left(\beta\beta'\right)=\left(E\beta\right)\beta'$; 
\item $\left(\alpha'\alpha\right)E=\alpha'\left(\alpha E\right)$. 
\end{enumerate}
\item Let  $E\in\Ext[C][n][][A]$, $E'\in\Ext[C'][n][][A']$, $F\in\Ext[D][m][][C]$, 
$F'\in\Ext[D'][m][][C']$,   $\alpha\in\Hom[][A][X]$, $\alpha'\in\Hom[][A'][X']$, $\gamma\in\Hom[][Y][C]$,
and $\gamma'\in\Hom[][Y'][C']$. Then the following equalities hold
true: 

\begin{enumerate}
\item $(\alpha\oplus\alpha')\left(E\oplus E'\right)=\alpha E\oplus\alpha'E'$
and $\left(E\oplus E'\right)\left(\gamma\oplus\gamma'\right)=E\gamma\oplus E'\gamma'$;
\item $\left(E\oplus E'\right)\left(F\oplus F'\right)=EF\oplus E'F'$;
\item $\left(E+E'\right)F=EF+E'F$ and $E\left(F+F'\right)=EF+EF'$;
\item $(\alpha+\alpha')E=\alpha E+\alpha'E$ and $E\left(\gamma+\gamma'\right)=E\gamma+E\gamma'$;
and
\item $\alpha\left(E+E'\right)=\alpha E+\alpha E'$ and $\left(E+E'\right)\gamma=E\gamma+E'\gamma$.
\end{enumerate}
\item The pair $\left(\Ext[C][n][][A],+\right)$ is an abelian group, where
the identity element is the extension $E_{0}$ given by the exact
sequence, in case $n\geq2$, 
\[
\suc[A][A\stackrel{0}{\rightarrow} \cdots \stackrel{0}{\rightarrow} C][C][1][1]  \mbox{ .} 
\]

\end{enumerate}
\end{cor}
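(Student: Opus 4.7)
The overall plan is to reduce everything to the already established case $n=1$ by systematically exploiting the natural decomposition $E = \overline{\eta_n}\cdots\overline{\eta_1}$ together with Lemma \ref{lem:composicion esta bien definido}, which makes composition well defined on equivalence classes. The guiding idea is that every identity to be proved either involves one or two points of the decomposition chain (the left end for $\alpha$-actions, the right end for $\gamma$-actions, or an internal juncture for the associativity $(E\beta)E'=E(\beta E')$), so the action of morphisms and of direct sums only modifies finitely many of the short exact sequences making up $E$.

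For part (1), the key tool is the remark that for three $1$-extensions $\eta,\eta'$ and a morphism $\beta$, although $(\eta\beta)\eta'$ and $\eta(\beta\eta')$ need not be equal, there is an extension morphism $(\eta\beta)\eta'\to\eta(\beta\eta')$, hence $\overline{(\eta\beta)\eta'}=\overline{\eta(\beta\eta')}$ in $\Ext[\,][n+1][\mathcal{C}]$. Applied at the $1$-extension meeting point of the two natural decompositions, this yields (a). Identity (b) follows because left- and right-action by identities on the outermost $1$-extension of a decomposition is trivial by Lemma \ref{lem:propiedades1}(1). For (c) and (d), apply Lemma \ref{lem:propiedades1}(2) to the innermost (resp.\ outermost) short exact sequence in the decomposition, while leaving the other factors unchanged; well-definedness of composition absorbs the result.

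For part (2), each identity is established by splitting both sides into natural decompositions and checking that the decomposition of the left-hand side and the right-hand side coincide factor by factor, applying Theorem \ref{thm:ext1 es grupo} at each factor. Concretely: identities (a), (d), (e) concern only one factor (the outermost for $\alpha$-actions or innermost for $\gamma$-actions), so they reduce immediately to the $n=1$ statements in Theorem \ref{thm:ext1 es grupo}; identity (b) concerns the compatibility of $\oplus$ with composition, which holds factor by factor; identity (c) then follows from (a)--(b) combined with the definition $E+E'=\nabla_A(E\oplus E')\Delta_C$ and the bilinearity already proved in the $n=1$ case. For part (3), associativity and commutativity of $+$ follow formally from the identities in (2) together with part (1), precisely as in the proof of Theorem \ref{thm:ext1 es grupo}. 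To identify $E_0$ as the neutral element, one exhibits an explicit extension morphism from $E+E_0$ to $E$: the natural decomposition of $E_0$ has each constituent $1$-extension split, so Theorem \ref{thm:ext1 es grupo}(7) disposes of $n=1$, and a diagonal/codiagonal bookkeeping shows the general case. Inverses are obtained as $(-1_A)E$, with $(-1_A)E+E=((-1_A)+1_A)E=0\cdot E=E_0$ by (2)(d) and Theorem \ref{thm:ext1 es grupo}(7).

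The main obstacle will be the bookkeeping in part (3): identifying $E_0$ as the neutral element for $n\geq 2$. The delicate point is that $\nabla_A(E\oplus E_0)\Delta_C$ must be shown equivalent to $E$ not just at the level of the first and last short exact sequence, but coherently across all $n$ intermediate kernels. The right way to handle this is to use Corollary \ref{cor:asociatividad con morfismos}, which guarantees that a morphism of $n$-extensions with identities at the ends factors compatibly through all the intermediate natural decompositions; combined with the fact that $E_0$ is a composition of split short exact sequences, this reduces the problem to providing a morphism with fixed ends, which is straightforward by induction on $n$. Once the neutral element is confirmed, the remaining group axioms follow mechanically from part (2).
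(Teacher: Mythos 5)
The paper itself does not prove this statement --- it is imported from Mitchell (Lemma 3.2 and Theorem 3.3 of \cite{mitchell}) --- so there is no in-paper argument to compare against; your overall strategy of reducing to the $n=1$ case of Theorem \ref{thm:ext1 es grupo} via natural decompositions, with Lemma \ref{lem:composicion esta bien definido} guaranteeing well-definedness, is exactly the standard (Mitchell) route, and parts (1) and (2) are essentially sound as sketched. One ordering caveat in (2): the identity $(\alpha+\alpha')E=\alpha E+\alpha'E$ does not reduce ``immediately'' to the outermost factor, because after applying Theorem \ref{thm:ext1 es grupo} there you are left needing $(G+G')H=GH+G'H$, i.e.\ item (c), which you only derive afterwards. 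Either prove (c) first by the $\nabla/\Delta$ manipulation --- using Corollary \ref{cor:asociatividad con morfismos}(2) to convert the morphism of sequences $F\to F\oplus F$ with ends $\Delta_C$ and $\Delta_D$ into the identity $\Delta_C F=(F\oplus F)\Delta_D$ --- or prove (d) and (e) directly by that same device; as written the dependencies are circular.

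The genuine gap is in part (3), in the treatment of inverses. You assert $(-1_A)E+E=((-1_A)+1_A)E=0_A\cdot E=E_0$ ``by (2)(d) and Theorem \ref{thm:ext1 es grupo}(7)'', but item (7) of that theorem only says $0E=E_0$ for $1$-extensions. For $n\geq 2$ the definition gives $0_A\cdot E=(0_A\overline{\eta_n})\,\overline{\eta_{n-1}}\cdots\overline{\eta_1}$, a composite whose leftmost factor is split but whose remaining $n-1$ factors are still those of $E$; identifying this class with $E_0$ is precisely the nontrivial content of the zero-element claim, and no argument for it is supplied. It can be repaired: use (1)(a) together with Theorem \ref{thm:ext1 es grupo}(7) to write $\overline{\sigma}\,\overline{\eta_{n-1}}=(\overline{\sigma}\,0_{K_n})\overline{\eta_{n-1}}=\overline{\sigma}\,(0_{K_n}\overline{\eta_{n-1}})$ and thus propagate the splitting down the entire chain, then exhibit an explicit fixed-ends morphism from the canonical representative of $E_0$ (whose middle terms are $A,0,\dots,0,C$) into the resulting composite of split sequences. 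By contrast, your worry about the neutral element is less serious than you suggest: a direct computation with the pushout along $\nabla_A$ and the pullback along $\Delta_C$ produces a representative of $E+E_0$ isomorphic to $\eta$ itself, so a single morphism with fixed ends suffices there and no zig-zag is needed.
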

We conclude this section with the following theorem that focus on
characterizing the trivial extensions.
\begin{thm}
\cite[Theorem 4.2]{mitchell}\label{thm:E=00003D0} Let $n>1$ and $\eta\in\mathcal{E}_{\mathcal{C}}^{n}(C,A)$
with a natural decomposition $\eta=\eta_{n}\cdots\eta_{1}$. Then
, the following statements hold true:
\begin{enumerate}
\item $\overline{\eta}=0$; 
\item there is an exact sequence $\kappa$$\in\mathcal{E}_{\mathcal{C}}^{n}(C,A)$
and a pair of morphisms with fixed ends $0\leftarrow\kappa\rightarrow\eta\mbox{.}$
\item there is an exact sequence $\kappa'\in\mathcal{E}_{\mathcal{C}}^{n}(C,A)$
and a pair of morphisms with fixed ends $0\rightarrow\kappa'\leftarrow\eta\mbox{.}$

\end{enumerate}
\end{thm}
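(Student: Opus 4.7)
I will establish the three listed conditions as equivalent; this is the standard characterisation of trivial Yoneda extensions. The implications $(2)\Rightarrow(1)$ and $(3)\Rightarrow(1)$ are essentially definitional: every fixed-end morphism is a generator of the equivalence relation $\equiv$, so from a span $0\leftarrow\kappa\to\eta$ one reads off $\overline{\kappa}=\overline{\eta}$ and $\overline{\kappa}=\overline{0}=0$, forcing $\overline{\eta}=0$; the cospan case of (3) is symmetric.

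The substantive direction is $(1)\Rightarrow(2)$. Unpacking $\overline{\eta}=0$ supplies a finite zigzag $\eta=\kappa_{0},\kappa_{1},\dots,\kappa_{m}=0$ whose consecutive terms are joined by fixed-end morphisms in one direction or the other. I would prove by induction on $m$ the following stronger auxiliary claim: whenever $\sigma,\tau\in\mathcal{E}_{\mathcal{C}}^{n}(C,A)$ are joined by a zigzag of length $\leq m$, there exists $\mu\in\mathcal{E}_{\mathcal{C}}^{n}(C,A)$ together with fixed-end morphisms $\mu\to\sigma$ and $\mu\to\tau$. The case $m=1$ is immediate by taking $\mu$ to be whichever endpoint is the source of the arrow. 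For the inductive step, apply the hypothesis to the sub-zigzag between $\sigma$ and $\kappa_{m-1}$ to obtain some $\mu'$ with fixed-end morphisms $\mu'\to\sigma$ and $\mu'\to\kappa_{m-1}$, then combine the latter with the final arrow of the zigzag. Two of the four combinatorial cases collapse to compositions of codirectional arrows, one is already a span, and only the remaining ``peak'' configuration $\mu'\to\kappa_{m-1}\leftarrow\tau$ requires real work: it must be resolved into a span $\mu'\leftarrow\mu\to\tau$ via a pullback of $n$-extensions with fixed ends, after which the composite $\mu\to\mu'\to\sigma$ and the direct $\mu\to\tau$ are the morphisms sought. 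Applied with $\sigma=\eta$ and $\tau=0$, this produces the desired $\kappa$ in (2).

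The pivotal technical obstacle is therefore the existence of such a pullback inside $\mathcal{E}_{\mathcal{C}}^{n}(C,A)$. For $n=1$ it is a direct consequence of the pullback proposition for short exact sequences recalled earlier, applied to a pair of morphisms with common target. For $n>1$ I would proceed by a secondary induction on $n$: take the natural decompositions of the three $n$-extensions involved and use the factorisation corollary for morphisms of $n$-extensions to split each of the two given fixed-end morphisms into compatible morphisms between the rightmost short sequences of these decompositions. The $n=1$ case supplies a pullback short sequence whose left end becomes the new common target at the next level; the lemma factoring a morphism of short exact sequences through a pullback guarantees that each original morphism factors through this pullback with the correct intermediate end, so the inductive hypothesis (now for length $n-1$) applies to the remaining truncated sequences. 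Splicing the two pieces by composition of extensions delivers the desired $n$-extension $\mu$ with the required fixed-end morphisms.

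Finally, $(1)\Rightarrow(3)$ is obtained by the formal dual argument: every pullback is replaced by a pushout, the direction of every arrow in the zigzag reduction is reversed, and the dual form of the factorisation lemma plays the corresponding role. This produces the $\kappa'$ of (3) with fixed-end morphisms $\eta\to\kappa'$ and $0\to\kappa'$, as required.
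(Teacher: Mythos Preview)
The paper does not supply its own proof of this theorem; it is quoted verbatim from Mitchell with the citation \cite[Theorem 4.2]{mitchell} and no argument is given. So there is nothing in the present paper to compare your proof against beyond noting that Mitchell's proof is the intended reference.

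Your overall strategy---reducing an arbitrary zigzag witnessing $\overline\eta=0$ to a single span (for (2)) or cospan (for (3)) by repeatedly converting a ``peak'' $\mu'\to\kappa_{m-1}\leftarrow\tau$ into a ``valley'' $\mu'\leftarrow\mu\to\tau$---is exactly the standard one and is how the result is established in Mitchell and in Mac~Lane's \emph{Homology}. The implications $(2)\Rightarrow(1)$ and $(3)\Rightarrow(1)$ are, as you say, immediate from the definition of $\equiv$.

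Where your write-up is thin is the actual construction of the span $\mu$ from a cospan of $n$-extensions with fixed ends. Your inductive sketch (``do the rightmost short exact sequence first, then recurse on the length-$(n-1)$ truncations'') does not quite close as written: after handling the rightmost stage, the truncated sequences $\mu'_n\cdots\mu'_2$ and $\tau_n\cdots\tau_2$ have \emph{different} right ends (the images $K'_2$ and $L_2$, say), so the inductive hypothesis---which is stated for extensions with \emph{common} fixed ends---does not apply directly. One has to first modify the truncations (e.g.\ by pulling back along the projections from a common object such as $K'_2\oplus L_2$, or the fibre product $K'_2\times_{M_2}L_2$) and then check that the resulting pair still admits a cospan with fixed ends to which the induction applies. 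Alternatively, and this is closer to what Mitchell does, one can form the degreewise pullback of $\mu'$ and $\tau$ over $\kappa$ and verify directly that the resulting complex is exact; this avoids the induction on $n$ altogether but requires a short diagram chase. Either route works, but your sketch leaves this genuine bookkeeping step implicit.
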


\section{Additional structure in Abelian Categories}

In this section we will approach our problem dealing with arbitrary
products and coproducts. Of course, an abelian category does not necessarily
have arbitrary products and coproducts. Hence, we will review briefly
the theory of abelian categories with additional structure introduced
by A. Grothendieck in \cite{Ab}. For further reading we suggest \cite[Section 2.8]{Popescu}.

\subsection{Limits and colimits}
$ $
\begin{defn}
\cite[Section 1.4.]{Popescu} Let $\mathcal{C}$ and $I$ be categories, where
$I$ is small (that is the class of objects of $I$ is a set). Let
$F:I\rightarrow\mathcal{C}$ be a functor and $X\in\mathcal{C}$.
A family of morphisms $\left\{ \alpha_{i}:F(i)\rightarrow X\right\} _{i\in I}$
in $\mathcal{C}$ is co-compatible with $F$, if $\alpha_{i}=\alpha_{j}F(\lambda)$
for every $\lambda:i\rightarrow j$ in $I$.\\
\begin{minipage}[t]{0.7\columnwidth}%
The colimit (or inductive limit) of $F$ is an object $\colim$ in $\mathcal{C}$
with a co-compatible family of morphisms 
\[
\left\{ \mu_{i}:F(i)\rightarrow\colim\right\} _{i\in I}\mbox{,}
\]
such that for every co-compatible family of morphisms $\left\{ \gamma_{i}:F(i)\rightarrow X\right\} _{i\in I}$,
there is a unique morphism $\gamma:\colim\rightarrow X$ such that
$\gamma_{i}=\gamma\mu_{i}$ for every $i\in I$.
\end{minipage}\hfill{}%
\fbox{\begin{minipage}[t]{0.25\columnwidth}%
\[
\begin{tikzpicture}[-,>=to,shorten >=1pt,auto,node distance=3cm,main node/.style=, font=\scriptsize]

\coordinate (A) at (150:1.4cm);
\coordinate (B) at (30:1.4cm);
\coordinate (C) at (270:1.4cm);

\node (Fi) at (barycentric cs:A=1,C=0,B=0 ) {$F(i) $};
\node (Fj) at (barycentric cs:A=0,C=0,B=1 ) {$F(j) $};
\node (X) at (barycentric cs:A=0,C=1,B=0 ) {$X $};
\node (L) at (barycentric cs:A=1,C=1,B=1 ) {$\operatorname{colim}F $};

\draw[->, thin]  (Fi)  to  node  {$F( \lambda )$} (Fj);
\draw[->, thin]  (Fi)  to  node [below left] {$\gamma _i$} (X);
\draw[->, thin]  (Fj)  to  node [below right] {$\gamma _j$} (X);
\draw[->, thin]  (Fi)  to  node  {$\mu _i$} (L);
\draw[->, thin]  (Fj)  to [above left] node  {$\mu _j$} (L);
\draw[->, dashed]  (L)  to [above left]  node  {$\gamma$} (X);

\end{tikzpicture}
\]%
\end{minipage}}
\end{defn}

Let $I$ be a small category and $\lambda:i\rightarrow j$ be a morphism
in $I$. The following notation will be useful $s(\lambda):=i$ and
$t(\lambda):=j$.
\begin{prop}
\label{prop:construccion colimites}\cite[Proposition 8.4]{ringsofQuotients}
Let $\mathcal{C}$ be a preadditive category with coproducts and cokernels,
$I$ be a small category, $F:I\rightarrow\mathcal{C}$ be a functor,
and
\[
u_{k}:F(k)\rightarrow\bigoplus_{i\in I}F(i)\quad\forall k\in I\mbox{,}\quad v_{\lambda}:F(s(\lambda))\rightarrow\bigoplus_{\gamma\in H}F(s(\gamma))\quad\forall\lambda\in H:=\mbox{Hom}_{I}
\]
be the respective canonical inclusions into the coproducts. Then,\\
\begin{minipage}[t]{0.48\columnwidth}%
{\footnotesize
\[
 \colim=\Cok[\left(\bigoplus_{\gamma\in H}F(s(\gamma))\overset{\varphi}{\rightarrow}\bigoplus_{i\in I}F(i)\right)]\mbox{,}
\]}
where $\varphi$ is the morphism induced by the universal property
of coproducts applied to the family of morphisms
\[
\left\{ \varphi_{\lambda}:=u_{s(\lambda)}-u_{t(\lambda)}F(\lambda)\right\} _{\lambda\in H}\mbox{.}
\]
\end{minipage}\hfill{}%
\begin{minipage}[t]{0.49\columnwidth}%
\[
\begin{tikzpicture}[-,>=to,shorten >=1pt,auto,node distance=1cm,main node/.style=,framed, font=\scriptsize]

   \node[main node] (1) at (0,0)            {$\bigoplus_{\gamma\in H}F(s(\gamma))$};
   \node[main node] (2) at (3,0)        {$\bigoplus_{i\in I}F(i)$};
   \node[main node] (3) [below of=1]        {$F(s( \lambda))$};
   \node[main node] (4) [below of=2]       {$F(s( \lambda)) \oplus F(t( \lambda))$};

\draw[->, dashed]   (1)  to node    {$\varphi$}    (2);
\draw[->, thin]   (3)  to [below] node    {$\left(\begin{smallmatrix}1\\-F(\lambda)\end{smallmatrix}\right)$}    (4);
\draw[->, thin]   (3)  to node    {$v_{\lambda}$}    (1);
\draw[->, thin]   (4)  to [left] node    {$\left(\begin{smallmatrix}u_{s(\lambda)} & u_{t(\lambda)}\end{smallmatrix}\right)$}    (2);

\end{tikzpicture}
\]%
\end{minipage}
\end{prop}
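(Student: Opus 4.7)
The plan is to verify the universal property of the colimit directly. Write $C:=\Cok[\varphi]$ and let $p:\bigoplus_{i\in I}F(i)\rightarrow C$ denote the canonical projection. Define the candidate co-compatible family by $\mu_{i}:=p\,u_{i}$ for each $i\in I$. The proof then splits into three routine verifications: (i) the $\mu_{i}$ are co-compatible with $F$, (ii) any co-compatible family factors through $C$, and (iii) this factorization is unique.

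For (i), fix a morphism $\lambda:i\rightarrow j$ in $I$. Observe that by definition of $\varphi$ via the universal property of the coproduct, we have $\varphi\,v_{\lambda}=\varphi_{\lambda}=u_{s(\lambda)}-u_{t(\lambda)}F(\lambda)=u_{i}-u_{j}F(\lambda)$. Since $p\,\varphi=0$, composing with $v_{\lambda}$ yields $p\,u_{i}=p\,u_{j}F(\lambda)$, that is $\mu_{i}=\mu_{j}F(\lambda)$, as required.

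For (ii), let $\{\gamma_{i}:F(i)\rightarrow X\}_{i\in I}$ be co-compatible. The universal property of the coproduct yields a unique morphism $\gamma':\bigoplus_{i\in I}F(i)\rightarrow X$ with $\gamma' u_{i}=\gamma_{i}$ for every $i$. Then for each $\lambda\in H$,
\[
\gamma'\varphi\,v_{\lambda}=\gamma'(u_{s(\lambda)}-u_{t(\lambda)}F(\lambda))=\gamma_{s(\lambda)}-\gamma_{t(\lambda)}F(\lambda)=0,
\]
where the last equality is exactly the co-compatibility of the family. Invoking once more the universal property of the coproduct $\bigoplus_{\gamma\in H}F(s(\gamma))$, we conclude $\gamma'\varphi=0$. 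Hence $\gamma'$ factors uniquely through the cokernel $p$ as $\gamma'=\gamma\,p$ for a unique $\gamma:C\rightarrow X$, and then $\gamma\,\mu_{i}=\gamma\,p\,u_{i}=\gamma' u_{i}=\gamma_{i}$.

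For (iii), if $\gamma_{1},\gamma_{2}:C\rightarrow X$ both satisfy $\gamma_{k}\mu_{i}=\gamma_{i}$ for all $i$, then $\gamma_{1}p\,u_{i}=\gamma_{2}p\,u_{i}$ for all $i$, so the uniqueness part of the coproduct's universal property gives $\gamma_{1}p=\gamma_{2}p$; since $p$ is an epimorphism (being a cokernel), $\gamma_{1}=\gamma_{2}$. There is no serious obstacle here; the argument is purely formal. The only point that deserves care is the bookkeeping step $\varphi\,v_{\lambda}=\varphi_{\lambda}$, which is a direct consequence of how $\varphi$ is assembled from the family $\{\varphi_{\lambda}\}_{\lambda\in H}$ via the coproduct's universal property.
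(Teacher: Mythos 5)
Your proof is correct and is the standard argument; the paper itself does not prove this proposition but only cites it from Stenström's \emph{Rings of Quotients}, and your verification of the colimit universal property via the cokernel of $\varphi$ (co-compatibility from $p\varphi=0$, factorization from $\gamma'\varphi=0$, uniqueness from $p$ being an epimorphism) is exactly the expected proof, valid in any preadditive category with coproducts and cokernels. The only cosmetic blemish is the notational clash in step (iii) between the two candidate factorizations $\gamma_{1},\gamma_{2}$ and the given family $\{\gamma_{i}\}_{i\in I}$.
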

The dual notion of colimit is the limit.
\begin{defn}
\cite[Section 1.4.]{Popescu} Let $\mathcal{C}$ and $I$ be categories, with
$I$ small. Let $F:I\rightarrow\mathcal{C}$ be a functor and $X\in\mathcal{C}$.
A family of morphisms $\left\{ \alpha_{i}:X\rightarrow F(i)\right\} _{i\in I}$
in $\mathcal{C}$ is compatible with $F$, if $\alpha_{j}=F(\lambda)\alpha_{i}$
for every $\lambda:i\rightarrow j$ in $I$.\\
\begin{minipage}[t]{0.7\columnwidth}%
The limit (or projective limit) of $F$ is an object $\lim F$ in $\mathcal{C}$
together with a compatible family of morphisms 
\[
\left\{ \mu_{i}:\lim F\rightarrow F(i)\right\} _{i\in I}
\]
such that for any compatible family of morphisms $\left\{ \gamma_{i}:X\rightarrow F(i)\right\} _{i\in I}$
there is a unique  $\gamma \in \operatorname{Hom}_{\mathcal{C}}(X, \lim F )$ such that
$\gamma_{i}=\mu_{i}\gamma$ for every $i\in I$.
\end{minipage}\hfill{}%
\fbox{\begin{minipage}[t]{0.25\columnwidth}%
\[
\begin{tikzpicture}[-,>=to,shorten >=1pt,auto,node distance=3cm,main node/.style=, font=\scriptsize]

\coordinate (A) at (150:1.4cm);
\coordinate (B) at (30:1.4cm);
\coordinate (C) at (270:1.4cm);

\node (Fi) at (barycentric cs:A=1,C=0,B=0 ) {$F(i) $};
\node (Fj) at (barycentric cs:A=0,C=0,B=1 ) {$F(j) $};
\node (X) at (barycentric cs:A=0,C=1,B=0 ) {$X $};
\node (L) at (barycentric cs:A=1,C=1,B=1 ) {$\operatorname{lim}F $};

\draw[->, thin]  (Fi)  to  node  {$F( \lambda )$} (Fj);
\draw[->, thin]  (X)  to  node [below left] {$\gamma _i$} (Fi);
\draw[->, thin]  (X)  to  node [below right] {$\gamma _j$} (Fj);
\draw[->, thin]  (L)  to  node [above right] {$\mu _i$} (Fi);
\draw[->, thin]  (L)  to [above left] node  {$\mu _j$} (Fj);
\draw[->, dashed]  (X)  to [above left] node  {$\gamma$} (L);

\end{tikzpicture}
\]%
\end{minipage}}
\end{defn}
\begin{prop}
\label{prop:construccion limites}\cite[Proposition 8.2]{ringsofQuotients}
Let $\mathcal{C}$ be a preadditive category with products and kernels,
$I$ be an small category, $F:I\rightarrow\mathcal{C}$ be a functor,
and 
\[
u_{k}:\prod_{i\in I}F(i)\rightarrow F(k)\quad\forall k\in I,\quad v_{\lambda}:\prod_{\gamma\in H}F(t(\gamma))\rightarrow F(t(\lambda))\quad\forall\lambda\in H:=\mbox{Hom}_{I}
\]
be the respective canonical proyections out of the products. Then,\\
\begin{minipage}[t]{0.46\columnwidth}%
{\small
\[
\lim F=\Ker[\left(\prod_{i\in I}F(i)\overset{\varphi}{\rightarrow}\prod_{\gamma\in H}F(t(\gamma))\right)]\mbox{,}
\]}
where $\varphi$ is the morphism induced by the universal property
of products applied to the family of morphisms
\[
\left\{ \varphi_{\lambda}:=F(\lambda)u_{s(\lambda)}-u_{t(\lambda)}\right\} _{\lambda\in H}\mbox{.}
\]
\end{minipage}\hfill{}%
\begin{minipage}[t]{0.49\columnwidth}%
\[
\begin{tikzpicture}[-,>=to,shorten >=1pt,auto,node distance=1.5cm,main node/.style=,framed]

   \node[main node] (1) at (0,0)            {$\prod_{i\in I}F(i)$};    \node[main node] (2) at (3,0)        {$\prod_{\gamma\in H}F(t(\gamma))$};    \node[main node] (3) [below of=1]        {$F(s( \lambda)) \oplus F(t( \lambda))$};    \node[main node] (4) [below of=2]       {$F(t( \lambda))$};
\draw[->, dashed]   (1)  to node    {$\varphi$}    (2); \draw[->, thin]   (1)  to [left] node    {$\left(\begin{smallmatrix}u_{s(\lambda)}\\u_{t(\lambda)}\end{smallmatrix}\right)$}    (3); \draw[->, thin]   (2)  to node    {$v_{\lambda}$}    (4); \draw[->, thin]   (3)  to [below] node    {$\left(\begin{smallmatrix} -F(\lambda) & 1\end{smallmatrix}\right)$}    (4);
   
   \end{tikzpicture}
\]%
\end{minipage}\end{prop}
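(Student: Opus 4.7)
The plan is to verify directly that $K:=\operatorname{Ker}(\varphi)$, together with an appropriate family of morphisms to the $F(i)$, satisfies the universal property of the limit. Let $\iota:K\to\prod_{i\in I}F(i)$ denote the kernel inclusion, and for each $k\in I$ define $\mu_{k}:=u_{k}\iota:K\to F(k)$.

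First I would check that $\{\mu_{k}\}_{k\in I}$ is compatible with $F$. By the universal property of the product, $\varphi$ is characterised by $v_{\lambda}\varphi=\varphi_{\lambda}=F(\lambda)u_{s(\lambda)}-u_{t(\lambda)}$, so composing with $\iota$ gives
\[
\bigl(F(\lambda)u_{s(\lambda)}-u_{t(\lambda)}\bigr)\iota=v_{\lambda}\varphi\iota=0
\]
for every $\lambda\in H$, hence $F(\lambda)\mu_{s(\lambda)}=\mu_{t(\lambda)}$.

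Next, given a compatible family $\{\gamma_{i}:X\to F(i)\}_{i\in I}$, the universal property of the product produces a unique $\tilde{\gamma}:X\to\prod_{i\in I}F(i)$ with $u_{i}\tilde{\gamma}=\gamma_{i}$ for all $i$. Compatibility of the $\gamma_{i}$ yields $v_{\lambda}\varphi\tilde{\gamma}=F(\lambda)\gamma_{s(\lambda)}-\gamma_{t(\lambda)}=0$ for every $\lambda\in H$, so a second application of the universal property of $\prod_{\gamma\in H}F(t(\gamma))$ forces $\varphi\tilde{\gamma}=0$. The universal property of the kernel then provides a unique $\gamma:X\to K$ with $\iota\gamma=\tilde{\gamma}$, and this $\gamma$ satisfies $\mu_{i}\gamma=u_{i}\iota\gamma=\gamma_{i}$ for every $i\in I$.

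Uniqueness is obtained by running the argument backwards: if $\gamma':X\to K$ also satisfies $\mu_{i}\gamma'=\gamma_{i}$ for all $i$, then $u_{i}(\iota\gamma')=\gamma_{i}=u_{i}\tilde{\gamma}$, so $\iota\gamma'=\tilde{\gamma}$ by the universal property of the product, and the universal property of the kernel forces $\gamma'=\gamma$. There is no real obstacle here beyond the bookkeeping of which universal property supplies each factorisation at each step; the whole argument is a purely formal diagram chase, and the statement is in fact entirely dual to Proposition \ref{prop:construccion colimites}, so one could alternatively deduce it by passing to the opposite category.
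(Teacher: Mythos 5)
Your argument is correct and complete: it is the standard verification that $\operatorname{Ker}(\varphi)$ with the morphisms $\mu_k=u_k\iota$ satisfies the universal property of the limit, and the paper itself gives no proof here but cites the result from the literature, where essentially this same diagram chase (or its dual, as you note) is carried out. Nothing further is needed.
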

\begin{defn}
Let $I$ be a small category and $\mathcal{C}$ be an abelian category.
It is said that a family of objects and morphisms $\left(M_{i},f_{\alpha}\right)_{i\in I,\alpha\in\mbox{Hom}_{I}}$
is a direct system, if there is a functor $F:I\rightarrow\mathcal{C}$
such that $F(i)=M_{i}\,\forall i\in I$ and $F(\alpha)=f_{\alpha}$ for every $\alpha\in\mbox{Hom}_{I}$.
\end{defn}

\subsection{Ab3 and Ab4 Categories}
\begin{defn}
\cite[Section 2.8.]{Popescu} An Ab3 category is an abelian category satisfying
the following condition:
\begin{description}
\item [{(Ab3)}] For every set of objects $\left\{ A_{i}\right\} _{i\in I}$
in $\mathcal{C}$, the coproduct $\bigoplus_{i\in I}A_{i}$ exists.
\end{description}
\end{defn}
We remember the following well known fact.
\begin{prop}
\cite[Section 2.8.]{Popescu} Let $\mathcal{C}$ be an Ab3 category and 
\[
\left\{ X'_{i}\overset{f_{i}}{\rightarrow}X_{i}\overset{g_{i}}{\rightarrow}X''_{i}\rightarrow0\right\} _{i\in I}
\]
be a set of exact sequences in $\mathcal{C}$. Then, 
\[
\bigoplus_{i\in I}X'_{i}\overset{\bigoplus_{i\in I}f_{i}}{\rightarrow}\bigoplus_{i\in I}X_{i}\overset{\bigoplus_{i\in I}g_{i}}{\rightarrow}\bigoplus_{i\in I}X''_{i}\rightarrow0
\]
is an exact sequence in $\mathcal{C}$.
\end{prop}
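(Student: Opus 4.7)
The plan is to reduce the claim to the fact that the coproduct functor preserves cokernels. First I would recast the hypothesis: the exactness of $X'_{i}\overset{f_{i}}{\rightarrow}X_{i}\overset{g_{i}}{\rightarrow}X''_{i}\rightarrow0$ is equivalent to $(X''_{i},g_{i})$ being a cokernel of $f_{i}$. Symmetrically, the conclusion is equivalent to $(\bigoplus_{i\in I}X''_{i},\bigoplus_{i\in I}g_{i})$ being a cokernel of $\bigoplus_{i\in I}f_{i}$. So the whole statement follows once I show that coproducts preserve cokernels.

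To verify this preservation, I would use universal properties directly. Write $\iota_{i}^{X'}$, $\iota_{i}^{X}$, $\iota_{i}^{X''}$ for the canonical inclusions into $\bigoplus_{i\in I}X'_{i}$, $\bigoplus_{i\in I}X_{i}$, $\bigoplus_{i\in I}X''_{i}$, respectively. The composition $(\bigoplus g_{i})(\bigoplus f_{i})$ is the unique morphism out of $\bigoplus X'_{i}$ whose restriction along each $\iota_{i}^{X'}$ equals $g_{i}f_{i}=0$, hence is the zero morphism. For the universal property, suppose $h:\bigoplus X_{i}\rightarrow Y$ satisfies $h(\bigoplus f_{i})=0$. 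Precomposing with $\iota_{i}^{X'}$ yields $(h\iota_{i}^{X})f_{i}=0$ for every $i\in I$, so the cokernel property of $g_{i}$ produces a unique $\bar{h}_{i}:X''_{i}\rightarrow Y$ with $\bar{h}_{i}g_{i}=h\iota_{i}^{X}$. The universal property of the coproduct then assembles the $\bar{h}_{i}$ into a unique $\bar{h}:\bigoplus X''_{i}\rightarrow Y$ with $\bar{h}\iota_{i}^{X''}=\bar{h}_{i}$ for every $i$. From the chain of equalities $\bar{h}(\bigoplus g_{i})\iota_{i}^{X}=\bar{h}\iota_{i}^{X''}g_{i}=\bar{h}_{i}g_{i}=h\iota_{i}^{X}$ and the uniqueness clause of the coproduct of $X_{i}$, I conclude $\bar{h}(\bigoplus g_{i})=h$; uniqueness of $\bar{h}$ is built into the construction.

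There is no real obstacle in this proof; the only care required is to keep the three distinct families of canonical inclusions straight and to apply the cokernel property of each $g_{i}$ together with the coproduct universal property in the correct order. A shorter and more abstract alternative is to notice that the coproduct is a colimit (cf. Proposition \ref{prop:construccion colimites}), hence it is left adjoint to the diagonal functor $\mathcal{C}\rightarrow\mathcal{C}^{I}$ when $I$ is treated as a discrete category, so it preserves all colimits, in particular cokernels, and the statement follows at once.
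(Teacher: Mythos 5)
Your argument is correct. Note that the paper does not actually prove this proposition; it records it as a well-known fact with a citation to Popescu, so there is no in-paper proof to compare against. Your reduction --- exactness of $X'_{i}\rightarrow X_{i}\rightarrow X''_{i}\rightarrow 0$ is the statement that $g_{i}$ is a cokernel of $f_{i}$, and coproducts preserve cokernels --- is the standard route, and both of your justifications (the direct diagram chase with the three families of inclusions, and the observation that $\bigoplus$ is left adjoint to the diagonal and hence preserves all colimits) are sound. The only point worth tightening in the hands-on version is the uniqueness clause of the cokernel property for $\bigoplus g_{i}$: given another $\bar{h}'$ with $\bar{h}'(\bigoplus g_{i})=h$, you should note that $\bar{h}'\iota_{i}^{X''}g_{i}=h\iota_{i}^{X}=\bar{h}_{i}g_{i}$ and invoke that each $g_{i}$ is an epimorphism to get $\bar{h}'\iota_{i}^{X''}=\bar{h}_{i}$, whence $\bar{h}'=\bar{h}$; this is a one-line addition rather than a gap.
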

In general, it is not possible to prove that $\bigoplus_{i\in I}f_{i}$
is a monomorphism if each $f_{i}$ is a monomorphism. For this reason, 
the following Grothendieck's condition arised. 
\begin{defn}
\cite[Proposition 8.3.]{Popescu} An Ab4 category is an Ab3 category $\mathcal{C}$
satisfying the following condition:
\begin{description}
\item [{(Ab4)}] for every set of monomorphisms $\left\{ f_{i}:X_{i}\rightarrow Y_{i}\right\} _{i\in I}$
in $\mathcal{C}$, the morphism $\bigoplus_{i\in I}f_{i}$ is a monomorphism.
\end{description}
\end{defn}
We will refer to the dual condition as Ab4{*}.
\begin{rem}
Let $\mathcal{C}$ be an Ab4 category. Then, for every sets of objects
$\{A_{i}\}_{i\in I}$ and $\{B_{i}\}_{i\in I}$ in $\mathcal{C}$,
the correspondence 
\begin{align*}
C:\prod_{i\in I}\Ext[A_{i}][n][][B_{i}] & \rightarrow\Ext[\bigoplus_{i\in I}A_{i}][n][][\bigoplus_{i\in I}B_{i}]\mbox{, }(\overline{\eta_{i}})\mapsto\overline{\bigoplus_{i\in I}\eta_{i}}\mbox{,}
\end{align*}
is a well defined morphism of abelian groups.
\end{rem}

\subsection{Ext groups and arbitrary products and coproducts}

We are finally ready to proceed in our goal's direction.
\begin{lem}
\label{lem:n-pushouy de monos} Let $\mathcal{C}$ be an Ab4 category,
and 
\[
\left\{ \eta_{i}:\quad\suc[B][A_{i}][C_{i}][f_{i}][g_{i}]\right\} _{i\in I}
\]
be a set of short exact sequences in $\mathcal{C}$. Then, there is
a short exact sequence 
\[
\eta:\quad\suc[B][\mbox{colim}(f_{i})][\bigoplus_{i\in I}C_{i}][f][g]
\]
such that $\eta\mu_{i}=\eta_{i}\:\forall i\in I$, where $\left\{ \mu_{i}:C_{i}\rightarrow\bigoplus_{i\in I}C_{i}\right\} _{i\in I}$
is the family of canonical inyections into the coproduct. \end{lem}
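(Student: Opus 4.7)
The plan is to realise $\operatorname{colim}(f_i)$ as a pushout and obtain $\eta$ by pushing out the coproduct of the sequences $\eta_i$.

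First, by a direct check of universal properties, $\operatorname{colim}(f_i)$ coincides with the pushout $P$ of the pair
\[
\bigoplus_{i\in I} f_i :\bigoplus_{i\in I} B\longrightarrow \bigoplus_{i\in I} A_i \quad \text{and}\quad \nabla_B:\bigoplus_{i\in I} B\longrightarrow B,
\]
where $\nabla_B$ is the codiagonal induced by the family $\{1_B\}_{i\in I}$. Indeed, a cocone over $\{f_i\}_{i\in I}$ with vertex $X$ is the same datum as a morphism $B\to X$ together with a morphism $\bigoplus_{i\in I} A_i\to X$ whose composition with $\bigoplus f_i$ agrees with the composite of $B\to X$ with $\nabla_B$.

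Next, combining the preceding proposition on coproducts of right-exact sequences with Ab4 applied to the family of monomorphisms $\{f_i\}_{i\in I}$, the sequence
\[
0\to \bigoplus_{i\in I} B \overset{\bigoplus f_i}{\longrightarrow} \bigoplus_{i\in I} A_i \overset{\bigoplus g_i}{\longrightarrow} \bigoplus_{i\in I} C_i\to 0
\]
is exact. Using the standard fact that, in any abelian category, the pushout of a short exact sequence $0\to A\to B\to C\to 0$ along a morphism $A\to A'$ produces a short exact sequence $0\to A'\to B'\to C\to 0$, I form the pushout of the sequence above along $\nabla_B$ and obtain a short exact sequence
\[
\eta:\ 0\to B\overset{f}{\longrightarrow} P\overset{g}{\longrightarrow} \bigoplus_{i\in I} C_i\to 0,
\]
whose middle term $P$ is precisely $\operatorname{colim}(f_i)$ by the first paragraph.

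Finally, to verify $\eta\mu_i=\eta_i$, let $\alpha_i:A_i\to P$ denote the canonical morphism into the colimit. By construction $\alpha_i f_i=f$, and $g$ was obtained from the pushout in such a way that $g\alpha_i=\mu_i g_i$. Hence $(1_B,\alpha_i,\mu_i):\eta_i\to\eta$ is a morphism of short exact sequences, and applying Lemma \ref{lem:pb/po} with $\alpha=1_B$ and $\gamma=\mu_i$ gives $\eta\mu_i = 1_B\eta_i = \eta_i$.

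The main obstacle is ensuring that $f$ is a monomorphism at the pushout step: this is precisely where Ab4 is used essentially, because if $\bigoplus f_i$ failed to be monic the induced pushout map $f$ could fail to be so as well, and the short exact sequence $\eta$ would not exist. Once this monomorphicity is granted, the rest follows by routine diagram-chasing and standard universal-property arguments.
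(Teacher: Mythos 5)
Your proof is correct, and it reaches the conclusion by a mechanism that differs from the paper's. The paper computes $\operatorname{colim}(f_{i})$ through the explicit cokernel presentation of \ref{prop:construccion colimites}, assembles the coproduct of the trivial sequences $0\rightarrow B\rightarrow B\rightarrow 0\rightarrow 0$ and the coproduct of the $\eta_{i}$ into a commutative diagram, and extracts $\eta$ from the snake lemma; Ab4 enters there to make the middle vertical map (whose relevant component is $\bigoplus_{i\in I}f_{i}$) a monomorphism, so that the induced sequence of cokernels is short exact. You instead identify $\operatorname{colim}(f_{i})$ with the pushout of $\bigoplus_{i\in I}f_{i}$ along the codiagonal $\nabla_{B}$ --- a correct observation, since a cocone under the direct system $\{f_{i}\}_{i\in I}$ is the same datum as a cocone under that span --- and then invoke the already-established fact (the dual of \ref{prop:pb:operar a izquierda}) that pushing out a short exact sequence along a morphism out of its left end yields a short exact sequence. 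In the paper's notation your construction says precisely that $\overline{\eta}=\nabla_{B}\,\overline{\left(\bigoplus_{i\in I}\eta_{i}\right)}$, which is arguably cleaner: it reuses the extension calculus of Section 2 rather than re-running a snake-lemma computation, and it makes the verification $\overline{\eta}\mu_{i}=\overline{\eta_{i}}$ a one-line application of \ref{lem:pb/po}. What the paper's version buys in exchange is that the colimit cocone morphisms $(1,\mu'_{i},\mu_{i}):\eta_{i}\rightarrow\eta$ appear directly from the colimit construction, in the form that is reused in Section 4. Both arguments use Ab4 in exactly the same place, namely to guarantee that $\bigoplus_{i\in I}f_{i}$ is a monomorphism, and you correctly flag this as the essential point.
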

\begin{proof}
Consider the set $\left\{ f_{i}:B\rightarrow A_{i}\right\} _{i\in I}$
as a direct system. Observe that the set of morphisms of exact sequences
\[
\left\{ (1_{B},f_{i},0):\beta\rightarrow\eta_{i}\right\} _{i\in I}\quad\mbox{with }\quad\beta:=\quad\suc[B][B][0][1][0]\mbox{,}
\]
is a direct system of exact sequences. We will consider the colimit
of such system and prove that, as result, we get a short exact sequence.
To this end, we observe that $(B,1_{i}:B\rightarrow B)_{i\in I}$
is the colimit of the system $\left\{ 1_{i}:B\rightarrow B\right\} _{i\in I}$
and that $(\bigoplus_{i\in I}C_{i},\mu_{i}:C_{i}\rightarrow\bigoplus_{i\in I}C_{i})$
is the colimit of the system $\left\{ 0:0\rightarrow C_{i}\right\} _{i\in I}$.\\
\begin{minipage}[t]{0.33\columnwidth}%
Hence, by \ref{prop:construccion colimites} we build the diagram
beside, where the columns are the morphism mentioned in \ref{prop:construccion colimites},
the upper and central rows are coproducts of the sequences $\beta$
and $\eta_{i}$ respectively, and the bottom row is the result of
the colimits. Thus, by the snake lemma we get the exact sequence %
\end{minipage}\hfill{}%
\fbox{\begin{minipage}[t]{0.62\columnwidth}%
\[
\begin{tikzpicture}[-,>=to,shorten >=1pt,auto,node distance=2.5cm,main node/.style=,x=2cm,y=.9cm, font=\scriptsize]

   \node[main node] (01) at (3,1)            {$\scriptstyle 0$};
   \node[main node] (02) at (0.3,0)            {$\scriptstyle 0$};
   \node[main node] (03) at (3.6,0)            {$\scriptstyle 0$};
   \node[main node] (04) at (0.3,-1)            {$\scriptstyle 0$};
   \node[main node] (05) at (3.6,-1)            {$\scriptstyle 0$};
   \node[main node] (06) at (1,-3)            {$\scriptstyle 0$};
   \node[main node] (07) at (2,-3)            {$\scriptstyle 0$};
   \node[main node] (08) at (3,-3)            {$\scriptstyle 0$};
   \node[main node] (09) at (3.6,-2)            {$\scriptstyle 0$};
   \node[main node] (X) at (2,-.5)            {$\scriptstyle $};

   \node[main node] (1) at (1,0)            {$\scriptstyle \bigoplus _{i\in I} B_i$};
   \node[main node] (2) at (2,0)            {$\scriptstyle \bigoplus _{i\in I} B_i$};
   \node[main node] (3) at (3,0)            {$\scriptstyle 0$};
   \node[main node] (4) at (1,-1)            {$\scriptstyle B \oplus ( \bigoplus _{i\in I} B_i )$};
   \node[main node] (5) at (2,-1)            {$\scriptstyle B \oplus ( \bigoplus _{i\in I} A_i )$};
   \node[main node] (6) at (3,-1)            {$\scriptstyle \bigoplus _{i\in I} C_i$};
   \node[main node] (7) at (1,-2)            {$\scriptstyle B$};
   \node[main node] (8) at (2,-2)            {$\scriptstyle \operatorname{colim}(f_i)$};
   \node[main node] (9) at (3,-2)            {$\scriptstyle \bigoplus _{i\in I} C_i$};

\draw[->, thin]   (01)  to node    {$$}    (3);
\draw[->, thin]   (02)  to node    {$$}    (1);
\draw[->, thin]   (1)  to node    {$$}    (2);
\draw[->, thin]   (2)  to node    {$$}    (3);
\draw[->, thin]   (3)  to node    {$$}    (03);

\draw[->, thin]   (04)  to node    {$$}    (4);
\draw[->, thin]   (4)  to node    {$$}    (5);
\draw[->, thin]   (5)  to node    {$$}    (6);
\draw[->, thin]   (6)  to node    {$$}    (05);

\draw[->, thin]   (7)  to node    {$$}    (8);
\draw[->, thin]   (8)  to node    {$$}    (9);
\draw[->, thin]   (9)  to node    {$$}    (09);

\draw[->, thin]   (1)  to node    {$$}    (4);
\draw[->, thin]   (4)  to node    {$$}    (7);
\draw[->, thin]   (7)  to node    {$$}    (06);

\draw[->, thin]   (2)  to node    {$$}    (5);
\draw[->, thin]   (5)  to node    {$$}    (8);
\draw[->, thin]   (8)  to node    {$$}    (07);

\draw[->, thin]   (01)  to node    {$$}    (3);
\draw[->, thin]   (3)  to node    {$$}    (6);
\draw[->, thin]   (6)  to node    {$$}    (9);
\draw[->, thin]   (9)  to node    {$$}    (08);

\draw[-, thin]   (01) ..controls (4,-.3).. (X);
\draw[->, thin]   (X) ..controls (0,-.7)..   (7);

\draw[-, thin]   (01) ..controls (4,-.3).. (2,-.5);
\draw[->, thin]   (2,-.5) ..controls (0,-.7)..   (7);

\end{tikzpicture}
\]%
\end{minipage}}\\
 
\[
\eta:\quad\suc[B][\mbox{colim}(f_{i})][\bigoplus_{i\in I}C_{i}]\mbox{.}
\]
Furthermore, the families of morphisms associated to such colimits
give us the exact sequence morphisms $(1,\mu'_{i},\mu_{i}):\eta_{i}\rightarrow\eta\;\forall i\in I\mbox{,}$
which proves the statement. \end{proof}
\begin{prop}
\label{prop:ext1 y coprods arb}Let $\mathcal{C}$ be an Ab4 category and
$\{A_{i}\}_{i\in I}$ a set of objects in $\mathcal{C}$.
Consider the coproduct with the canonical
inclusions $\left(\mu_{i}:A_{i}\rightarrow\bigoplus_{i\in I}A_{i}\right)_{i\in I}$.
Then, the correspondence $\Psi:\Ext[\bigoplus A_{i}][1][][B]\rightarrow\prod_{i\in I}\Ext[A_{i}][1][][B]$, defined by
$E\mapsto\left(E\mu_{i}\right)_{i\in I}$, is an isomorphism for every $B\in \mathcal{C}$. \end{prop}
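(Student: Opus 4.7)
The plan is to verify in three steps that $\Psi$ is a well-defined group homomorphism, that it is surjective, and that it is injective.

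First, $\Psi$ is a homomorphism of abelian groups: componentwise well-definedness is immediate from Proposition \ref{cor:caracterizacion accion a derecha}, and additivity follows from Theorem \ref{lem:propiedades2}(6), since $(E+E')\mu_i = E\mu_i + E'\mu_i$ holds for each $i\in I$.

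For surjectivity, given $(E_i)_{i\in I} \in \prod_{i\in I}\Ext[A_i][1][][B]$ with representatives
\[
\eta_i:\quad 0\rightarrow B\xrightarrow{f_i} A'_i\rightarrow A_i\rightarrow 0,
\]
Lemma \ref{lem:n-pushouy de monos} produces a short exact sequence $\eta:\; 0 \to B \to \operatorname{colim}(f_i) \to \bigoplus_{i\in I} A_i \to 0$ together with morphisms $(1_B, \mu'_i, \mu_i): \eta_i \to \eta$ for each $i\in I$. By Lemma \ref{cor:aE=00003DE'b} combined with Lemma \ref{lem:propiedades1}(1), each such morphism forces $\overline{\eta}\mu_i = 1_B\cdot\overline{\eta_i} = E_i$, so that $\Psi(\overline{\eta}) = (E_i)_{i\in I}$.

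For injectivity, suppose $\Psi(\overline{\eta}) = 0$ with $\eta:\; 0 \to B \to X \xrightarrow{g} \bigoplus_{i\in I} A_i \to 0$. By Theorem \ref{thm:ext1 es grupo} the zero element of $\Ext[A_i][1][][B]$ is the class of split sequences, so each pullback $\eta\mu_i:\; 0 \to B \to X_i \xrightarrow{p_i} A_i \to 0$ admits a section $s_i$ of $p_i$. Proposition \ref{prop:pb:operar a izquierda} supplies a canonical map $\pi_i: X_i \to X$ with $g\pi_i = \mu_i p_i$, so $\sigma_i := \pi_i s_i : A_i \to X$ satisfies $g\sigma_i = \mu_i$. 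The universal property of the coproduct yields a unique morphism $\sigma: \bigoplus_{i\in I} A_i \to X$ with $\sigma\mu_i = \sigma_i$ for all $i$; since both $g\sigma$ and $1_{\bigoplus A_i}$ compose with every $\mu_i$ to give $\mu_i$, uniqueness forces $g\sigma = 1_{\bigoplus A_i}$. Hence $\eta$ splits and $\overline{\eta} = 0$.

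The main obstacle is the surjectivity, where the Ab4 hypothesis is essential: one must assemble infinitely many middle terms $A'_i$ into a single object sitting between $B$ and $\bigoplus_{i\in I} A_i$, and this is precisely what Lemma \ref{lem:n-pushouy de monos} delivers through the snake lemma applied to its colimit construction. Once that lemma is available, the rest of the argument reduces to routine categorical manipulations with pullbacks, coproducts, and splittings.
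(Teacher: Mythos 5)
Your proposal is correct and follows essentially the same route as the paper: a three-step argument establishing that $\Psi$ is a homomorphism, surjective via Lemma \ref{lem:n-pushouy de monos} (the Ab4 colimit construction), and injective by lifting the sections of the split pullbacks $\eta\mu_i$ through the coproduct universal property to produce a global section of $g$. The only differences are cosmetic — your citations for the homomorphism property and the explicit verification that the morphisms $(1_B,\mu'_i,\mu_i)$ force $\overline{\eta}\mu_i=\overline{\eta_i}$ are details the paper leaves implicit.
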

\begin{proof}
We will proceed by proving the following steps:
\begin{enumerate}
\item The correspondence $\Psi$ is an abelian group morphism.
\item $\Psi$ is injective.
\item Given $(\overline{\eta_{i}})\in\prod_{i\in I}\Ext[A_{i}][1][][B]$, there is
$E\in\Ext[\bigoplus A_{i}][1][][B]$ such that $\Psi(E)=(\overline{\eta_{i}})$.
\end{enumerate}

Clearly, proving these statements are enough to conclude the desired
proposition.
\begin{enumerate}
\item It follows by \ref{cor:asociatividad con morfismos}.
\item Suppose that $E$ is an extension with representative 
\[
\eta:\;\suc[B][C][\bigoplus_{i\in I}A_{i}][f][g]
\]
such that $E\mu_{i}=0$  $\forall i\in I$. Suppose that $(1,p_{i},\mu_{i}):E\mu_{i}\rightarrow E$
is the morphism induced by $\mu_{i}$, and that each extension $E\mu_{i}$
has as representative the exact \\
\begin{minipage}[t]{0.45\columnwidth}%
sequence $\eta_{i}:\;\suc[B][C_{i}][A_{i}][f_{i}][g_{i}]\mbox{.}$
By definition, there is a morphism $h_{i}:A_{i}\rightarrow C_{i}$
such that $g_{i}h_{i}=1_{A_{i}}$. Thus, by the coproduct universal
property, there is a unique morphism $h:\bigoplus_{i\in I}A_{i}\rightarrow C$
such that $h\mu_{i}=p_{i}h_{i}$ $\forall i\in\{1,2\}$. Therefore,
by 
\[
gh\mu_{i}=gp_{i}h_{i}=\mu_{i}g_{i}h_{i}=\mu_{i}\:\forall i\in I,
\]
\end{minipage}\hfill{}%
\fbox{\begin{minipage}[t]{0.45\columnwidth}%
\[
\begin{tikzpicture}[-,>=to,shorten >=1pt,auto,node distance=1.3cm,main node/.style=,x=1.8cm,y=1.5cm, font=\scriptsize]

   \node[main node] (1) at (0,0)        {$0$};
   \node[main node] (2) at (0.5,0)    {$B$};
   \node[main node] (3) [right of=2]    {$C_i$};
   \node[main node] (4) [right of=3]    {$A_i$};
   \node[main node] (5) at (2.6,0)    {$0$};
   \node[main node] (6) [below of=1]    {$0$};
   \node[main node] (7) [below of=2]    {$B$};
   \node[main node] (8) [right of=7]    {$C$};
   \node[main node] (9) [right of=8]    {$\bigoplus _{i\in I} A_i$};
   \node[main node] (0) [below of=5]    {$0$};

\draw[->, thin]   (1)  to node  [above]{$$}     (2);
\draw[->, thin]   (2)  to node  [below]{$f_i$}  (3);
\draw[->, thin]   (3)  to node  [below]{$g_i$}  (4);
\draw[->, thin]   (4)  to node  [above]{$$}     (5);

\draw[->, thin]   (6)  to node  [below]{$$}     (7);
\draw[->, thin]   (7)  to node  [above]{$f$}  (8);
\draw[->, thin]   (8)  to node  [above]{$g$}  (9);
\draw[->, thin]   (9)  to node  [below]{$$}     (0);

\draw[-, double]   (2)  to node  [above]{$$}    (7);
\draw[->, thin]   (3)  to node  {$p_i$}    (8);
\draw[->, thin]   (4)  to node  {$\mu _i$}     (9);

\draw[->, thin]   (4)    to [bend right=30] node   [above]{$h_i$}  (3);
\draw[->, dashed]   (9)  to [bend left=30]  node   [below]{$h$}    (8);
   
\end{tikzpicture}
\]%
\end{minipage}}\\
 we have that $gh=1_{\bigoplus_{i\in I}A_{i}}$ by the coproduct universal
property; and thus, $E=0$.
\item It follows by \ref{lem:n-pushouy de monos}.
\end{enumerate}
\end{proof}
\begin{thm}
\label{prop:extn coprods arb} Let $\mathcal{C}$ be an Ab4 category,
$n\geq1$, and $\{A_{i}\}_{i\in I}$ be a set of objects in $\mathcal{C}$.
 Consider the coproduct $\bigoplus _{i \in I} A_i$ and the canonic inclusions  $\left(\mu_{i}:A_{i}\rightarrow\bigoplus_{i\in I}A_{i}\right)_{i\in I}$.
Then, the correspondence $\Psi_{n}:\Ext[\bigoplus A_{i}][n][][B]\rightarrow\prod_{i\in I}\Ext[A_{i}][n][][B]$,
$E\mapsto\left(E\mu_{i}\right)_{i\in I}$, is an isomorphism of abelian
groups for every $B\in \mathcal{C}$.\end{thm}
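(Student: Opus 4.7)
My plan is to exhibit an explicit inverse $\Phi$ to $\Psi_{n}$, in direct analogy with the $n=1$ case in Proposition \ref{prop:ext1 y coprods arb}. Given a family $(\overline{\eta_{i}})_{i\in I}\in\prod_{i\in I}\operatorname{Ext}^{n}(A_{i},B)$ with representatives $\eta_{i}\colon 0\to B\to M_{i,n-1}\to\cdots\to M_{i,0}\to A_{i}\to 0$, the Ab4 hypothesis applied at each middle node guarantees that the term-by-term coproduct
\[
\bigoplus_{i\in I}\eta_{i}\colon\ 0\to\bigoplus_{i\in I}B\to\bigoplus_{i\in I}M_{i,n-1}\to\cdots\to\bigoplus_{i\in I}M_{i,0}\to\bigoplus_{i\in I}A_{i}\to 0
\]
is again exact. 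Pushing it out along the codiagonal $\nabla_{B}\colon\bigoplus_{i\in I}B\to B$ produces an $n$-extension, and I define $\Phi((\overline{\eta_{i}})):=\nabla_{B}\cdot\overline{\bigoplus_{i\in I}\eta_{i}}\in\operatorname{Ext}^{n}(\bigoplus_{i\in I}A_{i},B)$. Well-definedness on equivalence classes reduces to the observation that the coproduct of morphisms with fixed ends is again a morphism with fixed ends and that the left action respects $\equiv$.

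To verify $\Psi_{n}\circ\Phi=\operatorname{id}$, I use the canonical injections $\iota_{i}\colon B\to\bigoplus_{j}B$ and $\mu_{i}\colon A_{i}\to\bigoplus_{j}A_{j}$, together with the canonical injections on every middle term, to obtain a morphism of $n$-extensions $(\iota_{i},\ldots,\mu_{i})\colon\eta_{i}\to\bigoplus_{j}\eta_{j}$. Corollary \ref{cor:asociatividad con morfismos}(2)(a) then gives $\overline{\bigoplus_{j}\eta_{j}}\cdot\mu_{i}=\iota_{i}\cdot\overline{\eta_{i}}$. Combining this with the associativity $(\alpha E)\gamma=\alpha(E\gamma)$ from Corollary \ref{cor:props comps extn}(1) and with $\nabla_{B}\iota_{i}=1_{B}$, I compute
\[
\Phi((\overline{\eta_{j}}))\mu_{i}=\nabla_{B}\cdot\overline{\bigoplus_{j}\eta_{j}}\cdot\mu_{i}=(\nabla_{B}\iota_{i})\cdot\overline{\eta_{i}}=\overline{\eta_{i}},
\]
so the $i$-th component of $\Psi_{n}(\Phi((\overline{\eta_{j}})))$ is indeed $\overline{\eta_{i}}$.

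To verify $\Phi\circ\Psi_{n}=\operatorname{id}$, I fix $E=\overline{\eta}\in\operatorname{Ext}^{n}(\bigoplus A_{i},B)$. Each pullback extension $\eta\mu_{i}$ comes equipped with a canonical morphism $(1_{B},\ldots,\mu_{i})\colon\eta\mu_{i}\to\eta$. Assembling these morphisms through the universal property of the coproduct on both ends and on every middle term packages them into a single morphism of $n$-extensions
\[
(\nabla_{B},\ldots,1_{\bigoplus_{i}A_{i}})\colon\bigoplus_{i\in I}\eta\mu_{i}\to\eta,
\]
whose right-end component is $1_{\bigoplus_{i}A_{i}}$ because the family $(\mu_{i})_{i\in I}$ induces the identity on $\bigoplus_{i}A_{i}$ by the coproduct universal property. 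A second application of Corollary \ref{cor:asociatividad con morfismos}(2)(a) yields $E\cdot 1_{\bigoplus_{i}A_{i}}=\nabla_{B}\cdot\overline{\bigoplus_{i\in I}\eta\mu_{i}}$, whence $E=\Phi(\Psi_{n}(E))$. Additivity of $\Psi_{n}$ is immediate from Corollary \ref{cor:props comps extn}(2)(e).

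The main obstacle is careful bookkeeping: first, ensuring that the coproduct of $n$-exact sequences stays exact, which is precisely the role of Ab4 already exploited in Lemma \ref{lem:n-pushouy de monos}; and second, checking that the maps produced by coproduct universal properties really commute with \emph{all} the middle differentials, so that they are genuine morphisms of $n$-extensions and the formalism of Section 2 — in particular the key identity $E'\gamma=\alpha E$ of Corollary \ref{cor:asociatividad con morfismos}(2)(a) — can be applied uniformly.
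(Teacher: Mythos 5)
Your proof is correct, and it takes a genuinely different route from the paper's. You build a single explicit two-sided inverse $\Phi=(\overline{\eta_i})_{i\in I}\mapsto \nabla_B\cdot\overline{\bigoplus_{i\in I}\eta_i}$ and verify both composites uniformly from the identity $\overline{\eta'}\gamma=\alpha\overline{\eta}$ attached to a morphism of $n$-extensions (Corollary \ref{cor:asociatividad con morfismos}); in particular injectivity falls out of $\Phi\circ\Psi_n=\operatorname{id}$ with no extra work. The paper instead argues injectivity and surjectivity separately: injectivity goes through the characterization of trivial $n$-extensions by zig-zags of fixed-end morphisms to split sequences (Theorem \ref{thm:E=00003D0}), and surjectivity through a recursion on the natural decomposition in which $\Psi_1^{-1}$ is applied only to the leftmost short exact factors and term-wise coproducts to the rest. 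The two inverse constructions in fact coincide, since the pushout of $\bigoplus_{i}\kappa^i_n$ along $\nabla_B$ is exactly the colimit of Lemma \ref{lem:n-pushouy de monos} and the left action only alters the leftmost factor; so the difference is in the verification, where your argument is shorter and avoids the case analysis. Two small points you should make explicit to match the paper's level of rigour: (i) well-definedness of $\overline{\bigoplus_i\eta_i}$ on equivalence classes is not quite immediate from ``coproducts of fixed-end morphisms are fixed-end,'' because the zig-zags witnessing $\eta_i\equiv\eta_i'$ may point in different directions for different $i$; one needs the two-step characterization of $\equiv$ (as in Theorem \ref{thm:E=00003D0}) to align them before taking coproducts — the paper itself asserts this well-definedness in a remark, so the appeal is legitimate but worth flagging; (ii) the compatibility $(\alpha E)\gamma=\alpha(E\gamma)$ for $n\geq 2$ is not listed verbatim in Corollary \ref{cor:props comps extn}(1), though it follows at once since the two actions touch different factors of the natural decomposition.
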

\begin{proof}
We will proceed by proving the following statements:
\begin{enumerate}
\item The correspondence $\Psi_{n}$ is a morphism of abelian groups;
\item $\Psi_{n}$ is injective;
\item For every  $(\overline{\eta_{i}})\in\prod_{i\in I}\Ext[A_{i}][n][][B]$,
there is  $E\in \Ext[\bigoplus _{i \in I} A_{i}][n][][B]$ such that $\Psi_{n}(E)=(\overline{\eta_{i}})$.
\end{enumerate}
It is worth to mention that the result was already proved in \ref{prop:ext1 y coprods arb}
for $n=1$. Furthermore, in the proof of \ref{prop:ext1 y coprods arb}(c)
it was shown explicitly the inverse function of $\Psi_{1}$. We will
denote such correspondence as $\Psi_{1}^{-1}$.
\begin{enumerate}
\item It follows by \ref{cor:asociatividad con morfismos}.
\item Let $\overline{\eta}$ be an extension with a natural decomposition
$\overline{\eta}=\overline{\eta_{n}}\cdots\overline{\eta_{1}}$ such
that $\overline{\eta}\mu_{i}=0\:\forall i\in I$. By \ref{thm:E=00003D0}
this means that for every $i\in I$ there is a pair of exact sequences
morphisms with fixed ends $\eta\mu_{i}\leftarrow\kappa_{i}\rightarrow0$.
Suppose that each exact sequence $\kappa_{i}$ has the natural decomposition $\kappa_{i}=\kappa(i)_{n}\cdots\kappa(i)_{1}\mbox{.}$ It follows from the morphism $\kappa_{i}\rightarrow0$ that
\[
\kappa(i)_{n}:=\kappa'_{i}:\quad\suc[B][Y_{i}][X_{i}][f_{i}][g_{i}]
\]
is a splitting exact sequence. Let $(\overline{\kappa'_{i}}):=(\overline{\kappa'_{i}})_{i\in I}\in\prod_{i\in I}\Ext[X_{i}][1][][B]\mbox{.}$  By \ref{prop:ext1 y coprods arb}(c), we know that  $\Psi_{1}^{-1}(\overline{\kappa'_{i}})\in\Ext[\bigoplus_{i\in I}X_{i}][1][][B]$
is an extension such that $\Psi_{1}^{-1}(\overline{\kappa'_{i}})\mu'_{i}=\kappa'_{i}\forall i\in I$,
where each $\mu'_{i}:X_{i}\rightarrow\bigoplus_{i\in I}X_{i}$ is
 the canonical inclusion. Let $\kappa:=\Psi_{1}^{-1}(\overline{\kappa'_{i}})\left(\overline{\bigoplus_{i\in I}\kappa(i)_{n-1}}\right)\cdots\left(\overline{\bigoplus_{i\in I}\kappa(i)_{1}}\right)\mbox{.}$ We will show that there is a pair of exact sequence morphisms with
fixed ends $\eta\leftarrow\kappa\rightarrow0$,
which will prove (b) by \ref{thm:E=00003D0}. Indeed, by the fact
that for every $i\in I$ there is a morphism with fixed ends $\eta\mu_{i}\leftarrow\kappa_{i}$,
it follows that there is a morphism with fixed right end $\eta_{n-1}\cdots\eta_{1}\mu_{i}\leftarrow\kappa(i)_{n-1}\cdots\kappa(i)_{1}\mbox{,}$ inducing by the coproduct universal property a morphism with fixed
right end
\[
\eta_{n-1}\cdots\eta_{1}\leftarrow\left(\bigoplus_{i\in I}\kappa(i)_{n-1}\right)\cdots\left(\bigoplus_{i\in I}\kappa(i)_{1}\right)\mbox{.}
\]
Furthermore, by the proof of \ref{prop:ext1 y coprods arb} we know
that $\Psi_{1}^{-1}(\overline{\kappa'_{i}})$ has as representative
the exact sequence $\suc[B][\mbox{colim}(f_{i})][\bigoplus_{i\in I}X_{i}][f][g]\mbox{.}$ Hence, using the colimit universal property, is easy to see that there
is a morphism with fixed left end $\eta_{n}\leftarrow\Psi_{1}^{-1}(\overline{\kappa'_{i}})\mbox{.}$ Therefore, with the last morphisms we can build a morphism with fixed
ends $\eta\leftarrow\kappa\mbox{.}$ For showing the existence of a morphism with fixed ends $\kappa\rightarrow0$,
it is enough to show that $f$ is a splitting monomorphism, which
follows straightforward from the colimit universal property together
with the fact that every $f_{i}$ is a splitting monomorphism.
\item Let $(\overline{\eta_{i}})\in\prod_{i\in I}\Ext[A_{i}][n][][B]$.
We observe the following facts for every $i\in I$. Suppose $\overline{\eta_{i}}=\overline{\kappa_{n}^{i}}\cdots\overline{\kappa_{1}^{i}}$
is a natural decomposition, where 
\[
\kappa_{k}^{i}:\quad\suc[B_{k+1}^{i}][C_{k}^{i}][B_{k}^{i}]\:\forall k\in\{n,\cdots,1\}\mbox{.}
\]
Consider the coproduct canonical inclusions $u_{k}^{i}:B_{k}^{i}\rightarrow\bigoplus_{i\in I}B_{k}^{i}$.
Observe that $u_{1}^{i}=\mu_{i}\,\forall i\in I$. By \ref{cor:aE=00003DE'b}
we can see that $\overline{\left(\bigoplus_{i\in I}\kappa_{k}^{i}\right)}u_{k}^{i}=u_{k+1}^{i}\overline{\kappa_{k}^{i}}$ for all $ k\in\left\{ 1,\cdots,n+1\right\} \mbox{.}$ Hence, by \ref{prop:ext1 y coprods arb}(c), for every $i \in I$ the extension defined as $\overline{\eta}:=\Psi_{1}^{-1}(\kappa_{n}^{i})_{i\in I}\overline{\left(\bigoplus_{i\in I}\kappa^{i}{}_{n-1}\right)}\cdots\overline{\left(\bigoplus_{i\in I}\kappa^{i}{}_{1}\right)}\mbox{,}$ satisfies by recursion the following equalities 
\begin{alignat*}{1}
\overline{\eta}\mu_{i}  =\overline{\eta}u_{1}^{i} & =\Psi_{1}^{-1}(\kappa_{n}^{i})_{i\in I}\overline{\left(\bigoplus_{i\in I}\kappa^{i}{}_{n-1}\right)}\cdots\overline{\left(\bigoplus_{i\in I}\kappa^{i}{}_{1}\right)}u_{1}^{i}\\
 & =\Psi_{1}^{-1}(\kappa_{n}^{i})_{i\in I}\overline{\left(\bigoplus_{i\in I}\kappa^{i}{}_{n-1}\right)}\cdots\overline{\left(\bigoplus_{i\in I}\kappa^{i}{}_{2}\right)}u_{2}\overline{\kappa^{i}{}_{1}}\\
 & =\Psi_{1}^{-1}(\kappa_{n}^{i})_{i\in I}\overline{\left(\bigoplus_{i\in I}\kappa^{i}{}_{n-1}\right)}\cdots\overline{\left(\bigoplus_{i\in I}\kappa^{i}{}_{3}\right)}u_{3}\overline{\kappa{}_{2}^{i}}\overline{\kappa^{i}{}_{1}}\\
 & \vdots\\
 & =\Psi_{1}^{-1}(\kappa_{n}^{i})_{i\in I}\overline{\left(\bigoplus_{i\in I}\kappa^{i}{}_{n-1}\right)}u_{n-1}\overline{\kappa_{n-2}^{i}}\cdots\overline{\kappa_{1}^{i}}\\
 & =\Psi_{1}^{-1}(\kappa_{n}^{i})_{i\in I}u_{n}\overline{\kappa_{n-1}^{i}}\cdots\overline{\kappa_{1}^{i}}\\
 & =\overline{\kappa_{n}^{i}}\overline{\kappa_{n-1}^{i}}\cdots\overline{\kappa_{1}^{i}}\\
 & =\eta_{i}\mbox{.}
\end{alignat*}

\end{enumerate}
\end{proof}

By duality we have the following result.
\begin{thm}
\label{thm:ext vs prod arb} Let $\mathcal{C}$ be an Ab4{*} category, $n\geq1$, and $\{A_{i}\}_{i\in I}$
be a set of objects in $\mathcal{C}$. Consider
the product 
$\left(\pi_{i}:\prod_{i\in I}A_{i}\rightarrow A_{i}\right)_{i\in I}$.
Then, the correspondence $\Phi_{n}:\Ext[B][n][][\prod_{i\in I}A_{i}]\rightarrow\prod_{i\in I}\Ext[B][n][][A_{i}]$,
 $E\mapsto\left(\pi_{i}E\right)_{i\in I}$, is an isomorphism of abelian
groups for every $B \in \mathcal{C}$.
\end{thm}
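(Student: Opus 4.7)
The plan is to reduce the statement to Theorem \ref{prop:extn coprods arb} via passage to the opposite category. First I would observe that $\mathcal{C}^{op}$ is again an abelian category, and that by definition the Ab4* condition on $\mathcal{C}$ is precisely the Ab4 condition on $\mathcal{C}^{op}$. The product $\prod_{i\in I}A_i$ in $\mathcal{C}$, equipped with its canonical projections $\pi_i$, becomes the coproduct of $\{A_i\}_{i\in I}$ in $\mathcal{C}^{op}$, with each $\pi_i$ playing the role of the canonical inclusion.

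The second step is to transport the Yoneda extension theory developed in Section 2 across this duality. An $n$-extension $\eta$ in $\mathcal{C}$ with left end $A$ and right end $B$ yields, after reversing every arrow, an $n$-extension $\eta^{op}$ in $\mathcal{C}^{op}$ with left end $B$ and right end $A$. The relation $\preceq$, its generated equivalence $\equiv$, the composition of extensions, and the Baer sum are all defined in terms that are manifestly self-dual: pullbacks are interchanged with pushouts, monomorphisms with epimorphisms, and finite biproducts are self-dual. Hence one obtains a natural isomorphism of abelian groups
\[
\operatorname{Ext}^{n}_{\mathcal{C}}(B,A) \;\cong\; \operatorname{Ext}^{n}_{\mathcal{C}^{op}}(A,B),
\]
under which the left Yoneda action $\alpha E$ in $\mathcal{C}$ corresponds to the right Yoneda action $E\alpha^{op}$ in $\mathcal{C}^{op}$.

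The main step is then to apply Theorem \ref{prop:extn coprods arb} to the Ab4 category $\mathcal{C}^{op}$, the family $\{A_i\}_{i\in I}$, its coproduct $\prod_{i\in I}A_i$ with canonical inclusions $\pi_i$, and the fixed object $B$. This produces an isomorphism of abelian groups
\[
\Psi_{n}^{op}\colon \operatorname{Ext}^{n}_{\mathcal{C}^{op}}\!\bigl(\textstyle\prod_{i\in I}A_i,\,B\bigr) \;\longrightarrow\; \prod_{i\in I}\operatorname{Ext}^{n}_{\mathcal{C}^{op}}(A_i,B),\qquad F \longmapsto (F\pi_i)_{i\in I}.
\]
Translating across the identification from the previous paragraph, the source becomes $\operatorname{Ext}^{n}_{\mathcal{C}}(B,\prod_{i\in I}A_i)$, the target becomes $\prod_{i\in I}\operatorname{Ext}^{n}_{\mathcal{C}}(B,A_i)$, and the assignment $F\mapsto (F\pi_i)$ becomes $E\mapsto (\pi_i E)$. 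This is exactly the map $\Phi_n$ in the statement, so $\Phi_n$ is an isomorphism of abelian groups.

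The only potentially delicate point is the bookkeeping required to verify that every construction used in Section 2 (natural decompositions, the $\alpha E$ and $E\gamma$ actions, the Baer sum, and the characterization of trivial extensions via zigzags of morphisms with fixed ends) is genuinely self-dual, so that $(-)^{op}$ is an honest isomorphism of Yoneda Ext groups interchanging left and right actions. Since each construction is phrased purely in terms of kernels, cokernels, pullbacks, pushouts, biproducts, and universal properties, this verification is a routine dualization and introduces no new mathematical content; all the real work of the theorem is already contained in Theorem \ref{prop:extn coprods arb}.
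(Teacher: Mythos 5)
Your proposal is correct and follows exactly the route the paper takes: the paper's entire proof of this theorem is the single sentence ``By duality we have the following result,'' and your argument is simply that duality made explicit (Ab4* on $\mathcal{C}$ is Ab4 on $\mathcal{C}^{op}$, products become coproducts, the Yoneda Ext constructions of Section 2 are self-dual with left and right actions interchanged, then apply Theorem \ref{prop:extn coprods arb}). Nothing further is needed.
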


We will end this section introducing an application related to the
tilting theory developed in recent years. Namely, R. Colpi and K.
R. Fuller developed a theory of tilting objects of projective dimension
$\leq1$ for abelian categories in \cite{colpi2007tilting}, and P.
\v{C}oupek and J. {\v{S}}t'ov{\'\i}{\v{c}}ek developed a theory of cotilting
objects of injective dimension $\leq1$ for Grothendieck categories
in \cite{vcoupek2017cotilting}. A fundamental result needed in these
theories is that 
\[
{}\mbox{Ext}_{\mathcal{A}}^{1}\left(\bigoplus_{i\in I}A_{i},X\right)=0{}\mbox{ if and only if }{}\mbox{Ext}_{\mathcal{A}}^{1}(A_{i},X)=0\,\forall i\in I\mbox{.}
\]
Such result is proved showing that, in any Ab3 abelian category $\mathcal{A}$,
there is an injective correspondence $\mbox{Ext}_{\mathcal{A}}^{1}\left(\bigoplus_{i\in I}A_{i},X\right)\rightarrow\prod_{i\in I}\mbox{Ext}_{\mathcal{A}}^{1}(A_{i},X)$ (see \cite[Proposition 8.1, Proposition 8.2]{colpi2007tilting} and
\cite[Proposition A.1]{vcoupek2017cotilting} or the proof of \ref{prop:ext1 y coprods arb}).
Now, for extending the theory to tilting objects of projective dimension
$\leq n$, it is needed a similar result for $\mbox{Ext}^{n}$. But,
it is not known in general if there is an injective correspondence
$\mbox{Ext}_{\mathcal{A}}^{n}\left(\bigoplus_{i\in I}A_{i},X\right)\rightarrow\prod_{i\in I}\mbox{Ext}_{\mathcal{A}}^{n}(A_{i},X)$.

The following result follows from \ref{thm:ext vs prod arb} and \ref{prop:extn coprods arb}.
It is worth to mention that it extends \cite[Corollary 8.3]{colpi2007tilting}
and the dual of \cite[Corollary A.2]{vcoupek2017cotilting} when the
category is Ab4. 
\begin{cor}
Let $\mathcal{C}$ be an abelian category, $n\geq1$, $\{A_{i}\}_{i\in I}$
be a set of objects in $\mathcal{C}$, and $B\in\mathcal{C}$. Then,
the following statements hold true:
\begin{enumerate}
\item If $\mathcal{C}$ is Ab4, then $\Ext[\bigoplus_{i\in I}A_{i}][n][][B]=0$
if and only if $\Ext[A_{i}][n][][B]=0\,\forall i\in I$.
\item If $\mathcal{C}$ is Ab4{*}, then $\Ext[B][n][][\prod_{i\in I}A_{i}]=0$
if and only if $\Ext[B][n][][A_{i}]=0\,\forall i\in I$.
\end{enumerate}
\end{cor}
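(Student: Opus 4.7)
The plan is to deduce both statements directly from the isomorphisms already established in the preceding two theorems, using nothing more than the elementary fact that a product of abelian groups vanishes if and only if each factor vanishes.

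For part (a), I would invoke Theorem \ref{prop:extn coprods arb}, which asserts that under the Ab4 hypothesis the correspondence
\[
\Psi_{n}\colon\Ext[\bigoplus_{i\in I}A_{i}][n][][B]\longrightarrow\prod_{i\in I}\Ext[A_{i}][n][][B],\qquad E\mapsto\left(E\mu_{i}\right)_{i\in I},
\]
is an isomorphism of abelian groups. Since an isomorphism of abelian groups sends the zero group to the zero group and reflects triviality, the left-hand side is trivial if and only if $\prod_{i\in I}\Ext[A_{i}][n][][B]=0$, which in turn happens precisely when $\Ext[A_{i}][n][][B]=0$ for every $i\in I$. This gives the biconditional in (a).

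For part (b), I would apply the dual statement, Theorem \ref{thm:ext vs prod arb}, which under the Ab4* hypothesis yields an isomorphism of abelian groups
\[
\Phi_{n}\colon\Ext[B][n][][\prod_{i\in I}A_{i}]\longrightarrow\prod_{i\in I}\Ext[B][n][][A_{i}],\qquad E\mapsto\left(\pi_{i}E\right)_{i\in I}.
\]
The same triviality-of-products argument then gives the desired equivalence.

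Since the heavy lifting (the construction of $\Psi_n$ and $\Phi_n$ via colimits and limits, and the proofs that they are group isomorphisms) was already carried out in the two preceding theorems, there is no real obstacle here: the corollary is a formal consequence. The only small remark worth including is that the implication from the vanishing of each $\Ext[A_{i}][n][][B]$ to the vanishing of $\Ext[\bigoplus A_{i}][n][][B]$ is the nontrivial direction historically, and it is precisely this direction that fails (or is unknown) in an Ab3 category lacking the Ab4 hypothesis, as noted in the discussion preceding the corollary; thus the Ab4/Ab4* assumption is doing genuine work.
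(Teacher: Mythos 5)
Your proposal is correct and matches the paper exactly: the paper also derives the corollary as an immediate formal consequence of Theorem \ref{prop:extn coprods arb} and Theorem \ref{thm:ext vs prod arb}, via the observation that a product of abelian groups vanishes if and only if every factor does. Nothing further is needed.
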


\section{A characterization of Ab4}

This section is inspired by the comments made by Sergio Estrada during
the Coloquio Latinoamericano de \'Algebra XXIII. The goal is to prove
that if the correspondence $\Psi:\Ext[\bigoplus A_{i}][1][][B]\rightarrow\prod_{i\in I}\Ext[A_{i}][1][][B]$ 
defined above  is always  biyective
for an Ab3 category $\mathcal{C}$, then $\mathcal{C}$ is Ab4. 

Throughout this section for every natural number $ n>0$ we will consider the correspondence $\Psi_{n}:\Ext[\bigoplus X_{i}][n][][Y]\rightarrow\prod_{i\in I}\Ext[X_{i}][n][][Y]$ defined above.

In \ref{lem:n-pushouy de monos}, it was proved that, if $\mathcal{C}$
is Ab4, then given a set of exact sequences 
\[
\left\{ \eta_{i}:\:\suc[B][A_{i}][C_{i}][f_{i}]\right\} _{i\in I}\mbox{,}
\]
it can be built an exact sequence
$\suc[B][\mbox{colim}(f_{i})][\bigoplus_{i\in I}C_{i}][f]$,
where $f$ is part of the co-compatible family of morphisms associated
to $\colim[f_{i}]$. In case $\mathcal{C}$ is only an Ab3 category,
then by doing a similar construction we get an exact sequence 
$B\rightarrow\colim[f_{i}]\rightarrow\bigoplus_{i\in I}C_{i}\rightarrow0$.
Indeed, consider the direct system of exact sequences \\
\begin{minipage}[t]{1\columnwidth}%
\[
\begin{tikzpicture}[-,>=to,shorten >=1pt,auto,node distance=1cm,main node/.style=,x=1.5cm,y=1.5cm]

   \node[main node] (1) at (0,0)      {$0$};
   \node[main node] (2) [right of=1]  {$B$};
   \node[main node] (3) [right of=2]  {$B$};
   \node[main node] (4) [right of=3]  {$0$};
   \node[main node] (5) [right of=4]  {$0$};

   \node[main node] (1') [below of=1]      {$0$};
   \node[main node] (2') [right of=1']  {$B$};
   \node[main node] (3') [right of=2']  {$A_i$};
   \node[main node] (4') [right of=3']  {$C_i$};
   \node[main node] (5') [right of=4']  {$0$};

\draw[->, thin]   (1)  to node  {$$}  (2);
\draw[->, thin]   (2)  to node  {$1$}  (3);
\draw[->, thin]   (3)  to node  {$$}  (4);
\draw[->, thin]   (4)  to node  {$$}  (5);

\draw[->, thin]   (1')  to node  {$$}  (2');
\draw[->, thin]   (2')  to node  {$f_i$}  (3');
\draw[->, thin]   (3')  to node  {$$}  (4');
\draw[->, thin]   (4')  to node  {$$}  (5');

\draw[->, thin]   (2)  to node  {$1$}  (2');
\draw[->, thin]   (3)  to node  {$f_i$}  (3');
\draw[->, thin]   (4)  to node  {$$}  (4');

\end{tikzpicture}
\]%
\end{minipage}\\
Then, we have an exact sequence 
$B\stackrel{f}{\rightarrow}\colim[f_{i}]\stackrel{g}{\rightarrow}\bigoplus_{i\in I}C_{i}\rightarrow0$,
where $f$ and $g$ are induced by the colimit universal property
(see \cite[page 55]{Popescu}). Such exact sequence we shall name
it $\Theta(\eta_{i})$.

As a first step we will show that, even if the category is not Ab4,
if the correspondence $\Psi$ is biyective, then the inverse correspondence
is given by $\Theta$. That is, if $\Psi:\Ext[\bigoplus A_{i}][1][][B]\rightarrow\prod_{i\in I}\Ext[A_{i}][1][][B]$
is biyective, then for every set of exact sequences $\left\{ \eta_{i}:\:\suc[B][A_{i}][C_{i}][f_{i}]\right\} _{i\in I}$,
then the morphism $f$ in $\Phi(\eta_{i})$ is monic, and $\Psi\overline{\Theta(\eta_{i})}=1$. 
\begin{lem}
Let $\mathcal{C}$ be an AB3 category, $\left\{ A_{i}\right\} _{i\in I}$ be
a set of objects in $\mathcal{C}$, and $B\in\mathcal{C}$. Consider
the coproduct $\bigoplus_{i\in I}A_{i}$  with the canonical
inclusions $\left\{ \mu_{i}:A_{i}\rightarrow\bigoplus_{i\in I}A_{i}\right\} _{i\in I}$,
and a set of exact sequences $\left\{ \eta_{i}:\;\suc[B][E_{i}][A_{i}][f_{i}][g_{i}]\right\} _{i\in I}$.
If there is an exact sequence $\eta:\:\suc[B][E][\bigoplus_{i\in I}A_{i}][f'][g']$
such that $\overline{\eta}\mu_{i}=\overline{\eta_{i}}\,\forall i\in I$,
then the morphism $f$ in the exact sequence 
\[
\Theta(\eta_{i}):\:B\stackrel{f}{\rightarrow}\colim[f_{i}]\stackrel{g}{\rightarrow}\bigoplus_{i\in I}C_{i}\rightarrow0
\]
is a monomorphism and $\overline{\eta}=\overline{\Theta(\eta_{i})}$.\end{lem}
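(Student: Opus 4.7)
The plan is to construct a morphism of short exact sequences $\Theta(\eta_{i})\to\eta$ whose end maps are identities, deduce as a byproduct that the morphism $f$ is monic, and then invoke the snake lemma to conclude $\overline{\eta}=\overline{\Theta(\eta_{i})}$.

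The hypothesis $\overline{\eta}\mu_{i}=\overline{\eta_{i}}$ supplies, for each $i\in I$, an extension morphism $(1_{B},p_{i},\mu_{i}):\eta_{i}\to\eta$ (coming from the pullback description of $\eta\mu_{i}$ recalled in Proposition \ref{prop:pb:operar a izquierda}, together with the equivalence of extensions). In particular $p_{i}f_{i}=f'$ for every $i\in I$, so the family $\{p_{i}:E_{i}\to E\}_{i\in I}$ together with $f':B\to E$ forms a co-compatible cocone on the direct system that defines $\operatorname{colim}(f_{i})$. First I would apply its universal property to obtain a unique morphism $h:\operatorname{colim}(f_{i})\to E$ satisfying $h\mu'_{i}=p_{i}$ for every $i$ and $hf=f'$, where $\mu'_{i}:E_{i}\to\operatorname{colim}(f_{i})$ denote the colimit structural morphisms.

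Once $h$ is in hand, monicity of $f$ follows immediately: if $fb=0$, then $f'b=hfb=0$ and, since $f'$ is monic, $b=0$. Hence $\Theta(\eta_{i})$ is a genuine short exact sequence. For the equivalence, I would note that $g$ in $\Theta(\eta_{i})$ is, by construction, the unique morphism out of $\operatorname{colim}(f_{i})$ extending the compatible family $\{\mu_{i}g_{i}\}_{i\in I}$ and vanishing on $f$. A direct calculation shows that $g'h$ obeys the same universal recipe: $g'h\mu'_{i}=g'p_{i}=\mu_{i}g_{i}$ (the last equality being part of the morphism $\eta_{i}\to\eta$) and $g'hf=g'f'=0$. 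By uniqueness $g'h=g$, yielding a morphism $(1_{B},h,1):\Theta(\eta_{i})\to\eta$ of short exact sequences with identities at both ends; the snake lemma then forces $h$ to be an isomorphism, so $\overline{\eta}=\overline{\Theta(\eta_{i})}$.

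The main obstacle is the bookkeeping around the two universal-property invocations: one must carefully identify the direct system indexing $\operatorname{colim}(f_{i})$ and check that both candidate cocones $\{p_{i},f'\}_{i\in I}$ and $\{\mu_{i}g_{i},0\}_{i\in I}$ are genuinely co-compatible with it, so that the uniqueness clauses really do pin down $h$ and $g$. Once this is settled, the rest reduces to standard diagram chases.
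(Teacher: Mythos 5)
Your argument is correct and matches the paper's proof in all essentials: both obtain the comparison morphism $\operatorname{colim}(f_i)\to E$ from the cocone $\{p_i\}\cup\{f'\}$ via the colimit's universal property, deduce monicity of $f$ from $hf=f'$, and then exhibit a morphism of short exact sequences with identities at both ends to conclude the equivalence. The only (immaterial) difference is that you verify the right-hand component is the identity by checking $g'h=g$ against the uniqueness clause of the colimit, whereas the paper first produces a map $\omega'$ on $\bigoplus_{i\in I}A_i$ via the cokernel property and then shows $\omega'=1$ using that each $g_i$ is epic.
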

\begin{proof}
Consider the direct system $\left\{ f_{i}:B\rightarrow E_{i}\right\} _{i\in I}$.
We know that $\overline{\eta}\mu_{i}=\overline{\eta_{i}}\,\forall i\in I$.
Hence, for every $i\in I$ there is a morphism of exact sequences\\
\begin{minipage}[t]{0.5\columnwidth}%
\[
(1,\nu_{i},\mu_{i}):\eta_{i}\rightarrow\eta\mbox{.}
\]
Observe that the set of morphisms $\left\{ \nu_{i}:E_{i}\rightarrow E\right\} _{i\in I}$,
together with the morphism $f':B\rightarrow E$, is a co-compatible
family of morphisms.  %
\end{minipage}\hfill{}%
\fbox{\begin{minipage}[t]{0.45\columnwidth}%
\[
\begin{tikzpicture}[-,>=to,shorten >=1pt,auto,node distance=1.3cm,main node/.style=,x=1.5cm,y=1.5cm]

   \node[main node] (1) at (0,0)      {$0$};
   \node[main node] (2) [right of=1]  {$B$};
   \node[main node] (3) [right of=2]  {$E_i $};
   \node[main node] (4) [right of=3]  {$A_i $};
   \node[main node] (5) [right of=4]  {$0$};

   \node[main node] (1') [below of=1]      {$0$};
   \node[main node] (2') [right of=1']  {$B$};
   \node[main node] (3') [right of=2']  {$E$};
   \node[main node] (4') [right of=3']  {$\bigoplus A_i$};
   \node[main node] (5') [right of=4']  {$0$};

\draw[->, thin]   (1)  to node  {$$}  (2);
\draw[->, thin]   (2)  to [below] node  {$f_i$}  (3);
\draw[->, thin]   (3)  to [below]  node  {$g_i$}  (4);
\draw[->, thin]   (4)  to node  {$$}  (5);

\draw[->, thin]   (1')  to node  {$$}  (2');
\draw[->, thin]   (2')  to [above] node  {$f'$}  (3');
\draw[->, thin]   (3')  to [above] node  {$g'$}  (4');
\draw[->, thin]   (4')  to node  {$$}  (5');

\draw[-, double]   (2)  to node  {$$}  (2');
\draw[->, thin]   (3)  to node  {$\nu_i$}  (3');
\draw[->, thin]   (4)  to node  {$\mu _i$}  (4');

\end{tikzpicture}
\]
\end{minipage}}\\
Therefore, there is a unique morphism $\omega:\colim[f_{i}]\rightarrow E$
such that $\omega\sigma_{i}=\nu_{i}$ for every  $i$ in $ I$ and $\omega f=f'$,
where $\left\{ \sigma_{i}:E_{i}\rightarrow\colim[f_{i}]\right\} _{i\in I}\cup\{f:B\rightarrow\colim[f_{i}]\}$ is\\
\fbox{\begin{minipage}[t]{0.45\columnwidth}%
\[
\begin{tikzpicture}[-,>=to,shorten >=1pt,auto,node distance=1.5cm,main node/.style=,x=1.5cm,y=1.5cm]

   \node[main node] (1) at (-.5,0)      {$0$};
   \node[main node] (2) at (0,0)       {$B$};
   \node[main node] (3) [right of=2]  {$E_i $};
   \node[main node] (4) [right of=3]  {$A_i $};
   \node[main node] (5) at (2.5,0)  {$0$};

   \node[main node] (1') [below of=1]      {$$};
   \node[main node] (2') [below of=2]  {$B$};
   \node[main node] (3') [right of=2']  {$\operatorname{colim}(f_i)$};
   \node[main node] (4') [right of=3']  {$\bigoplus  A_i$};
   \node[main node] (5') [below of=5]  {$0$};

\draw[->, thin]   (1)  to node  {$$}  (2);
\draw[->, thin]   (2)  to [below] node  {$f_i$}  (3);
\draw[->, thin]   (3)  to [below] node  {$g_i$}  (4);
\draw[->, thin]   (4)  to node  {$$}  (5);

\draw[->, thin]   (2')  to node  {$f$}  (3');
\draw[->, thin]   (3')  to node  {$g$}  (4');
\draw[->, thin]   (4')  to node  {$$}  (5');

\draw[-, double]   (2)  to node  {$$}  (2');
\draw[->, thin]   (3)  to node  {$\sigma_i$}  (3');
\draw[->, thin]   (4)  to node  {$\mu _i$}  (4');

\end{tikzpicture}
\]
\end{minipage}}\hfill{}%
\begin{minipage}[t]{0.5\columnwidth}%
 the co-compatible family associated to the colimit. Notice that
$\omega f=f'$ is a monomorphism, so $f$ is also a monomorphism. 

It remains to prove that $\overline{\eta}=\overline{\Theta(\eta_{i})}$.
Observe that, by the cokernel universal property, we can build a morphism
of exact sequences%
\end{minipage}%
\\
\begin{minipage}[t]{0.5\columnwidth}%
\[
(1,\omega,\omega'):\Theta(\eta_{i})\rightarrow\eta\mbox{.}
\]
It is enough to show that $\omega'=1$. With that goal, we see that
\[
\omega'\mu_{i}g_{i}=\omega'g\sigma_{i}=g'\omega\sigma_{i}=g'\nu_{i}=\mu_{i}g_{i}\mbox{.}
\]
\end{minipage}\hfill{}%
\fbox{\begin{minipage}[t]{0.45\columnwidth}%

\[
\begin{tikzpicture}[-,>=to,shorten >=1pt,auto,node distance=1.5cm,main node/.style=,x=1.5cm,y=1.3cm]

   \node[main node] (1) at (-.5,0)      {$0$};
   \node[main node] (2) at (0,0)      {$B$};
   \node[main node] (3) [right of=2]  {$\operatorname{colim}(f_i)$};
   \node[main node] (4) [right of=3]  {$\bigoplus  A_i$};
   \node[main node] (5) at (2.7,0)  {$0$};

   \node[main node] (1') at (-.5,-1)      {$0$};
   \node[main node] (2') at (0,-1)       {$B$};
   \node[main node] (3') [right of=2']  {$E$};
   \node[main node] (4') [right of=3']  {$\bigoplus  A_i$};
   \node[main node] (5') at (2.7,-1)        {$0$};

\draw[->, thin]   (1)  to node  {$$}  (2);
\draw[->, thin]   (2)  to node [below] {$f$}  (3);
\draw[->, thin]   (3)  to node [below] {$g$}  (4);
\draw[->, thin]   (4)  to node  {$$}  (5);

\draw[->, thin]   (1')  to node  {$$}  (2');
\draw[->, thin]   (2')  to node  {$f'$}  (3');
\draw[->, thin]   (3')  to node  {$g'$}  (4');
\draw[->, thin]   (4')  to node  {$$}  (5');

\draw[-, double]   (2)  to node  {$$}  (2');
\draw[->, thin]   (3)  to node  {$\omega$}  (3');
\draw[->, thin]   (4)  to node  {$\omega '$}  (4');

\end{tikzpicture}
\]
\end{minipage}}

Hence, by the fact that $g_{i}$ is an epimorphism, $\omega'\mu_{i}=\mu_{i}\,\forall i\in I$.
Then, by the universal coproduct property we can conclude that $\omega'=1$.

\end{proof}

\begin{cor}
\label{cor:the inverse correspondence} Let $\mathcal{C}$ be an AB3
abelian category, $\left\{ A_{i}\right\} _{i\in I}$ be a set of objects
in $\mathcal{C}$, and $B\in\mathcal{C}$. Consider the coproduct
$\left\{ \mu_{i}:A_{i}\rightarrow\bigoplus_{i\in I}A_{i}\right\} _{i\in I}$,
and the correspondence $\Psi_{1}:\Ext[\bigoplus A_{i}][1][][B]\rightarrow\prod_{i\in I}\Ext[A_{i}][1][][B]$.
If $\Psi_{1}$ is biyective, then the inverse correspondence maps
each $(\overline{\eta_{i}})\in\prod_{i\in I}\Ext[A_{i}][1][][B]$,
with representatives 
\[
\eta_{i}:\;\suc[B][E_{i}][A_{i}][f_{i}]\:\forall i\in I\mbox{,}
\]
to the extension given by the exact sequence 
\[
\suc[B][\mbox{colim}(f_{i})][\bigoplus_{i\in I}A_{i}]\mbox{.}
\]
\end{cor}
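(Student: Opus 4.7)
The plan is to extract this corollary essentially for free from the preceding lemma, which has already done all the real work. Given any tuple $(\overline{\eta_{i}})\in\prod_{i\in I}\Ext[A_{i}][1][][B]$ with representatives $\eta_{i}:\;\suc[B][E_{i}][A_{i}][f_{i}][g_{i}]$, the assumed surjectivity of $\Psi_{1}$ immediately furnishes an extension $\overline{\eta}\in\Ext[\bigoplus A_{i}][1][][B]$ with $\Psi_{1}(\overline{\eta})=(\overline{\eta_{i}})$, i.e.\ a short exact sequence $\eta:\;\suc[B][E][\bigoplus_{i\in I}A_{i}][f'][g']$ such that $\overline{\eta}\mu_{i}=\overline{\eta_{i}}$ for every $i\in I$.

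At this point the hypotheses of the previous lemma are met verbatim, and I would simply invoke it on the data $(\eta_{i},\eta)$. The lemma yields two statements simultaneously: first, the canonical morphism $f:B\to\mbox{colim}(f_{i})$ appearing in $\Theta(\eta_{i})$ is a monomorphism, so that
\[
\suc[B][\mbox{colim}(f_{i})][\bigoplus_{i\in I}A_{i}]
\]
is in fact a short exact sequence and represents an honest element of $\Ext[\bigoplus A_{i}][1][][B]$; and second, $\overline{\eta}=\overline{\Theta(\eta_{i})}$ inside that group.

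Since $\Psi_{1}$ is assumed bijective, $\overline{\eta}$ is the unique preimage of $(\overline{\eta_{i}})$, and the identification just made pins it down as $\Psi_{1}^{-1}((\overline{\eta_{i}}))=\overline{\Theta(\eta_{i})}$, which is precisely the formula claimed. I do not foresee any real obstacle here: the substantive content of the corollary has already been absorbed into the preceding lemma, and all that remains is to combine surjectivity of $\Psi_{1}$ with that lemma to upgrade the statement ``every preimage coincides with $\overline{\Theta(\eta_{i})}$'' into an explicit description of the inverse correspondence.
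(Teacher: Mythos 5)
Your proposal is correct and is exactly the argument the paper intends: surjectivity of $\Psi_{1}$ produces an $\overline{\eta}$ with $\overline{\eta}\mu_{i}=\overline{\eta_{i}}$ for all $i$, the preceding lemma then identifies $\overline{\eta}$ with $\overline{\Theta(\eta_{i})}$ (and shows $f$ is monic), and bijectivity pins this down as the value of the inverse correspondence. Nothing is missing.
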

\begin{thm}
\label{thm:Ab4 vs ext} Let $\mathcal{C}$ be an Ab3 category. Then,
$\mathcal{C}$ is an Ab4 category if, and only if, the correspondence
$\Psi_{1}:\Ext[\bigoplus X_{i}][1][][Y]\rightarrow\prod_{i\in I}\Ext[X_{i}][1][][Y]$
is biyective for every $Y\in\mathcal{C}$ and every set of objects
$\left\{ X_{i}\right\} _{i\in I}$.\end{thm}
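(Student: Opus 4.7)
The forward direction is already established as Proposition \ref{prop:ext1 y coprods arb} (with $n=1$), so I focus on the converse. Assume $\Psi_1$ is bijective for every $Y\in\mathcal{C}$ and every set $\{X_i\}_{i\in I}$, and let $\{f_i: X_i\to Y_i\}_{i\in I}$ be an arbitrary family of monomorphisms in $\mathcal{C}$. The goal is to show that $\bigoplus_i f_i$ is a monomorphism.

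First, I form the short exact sequences $\eta_i: 0\to X_i\stackrel{f_i}{\to}Y_i\to Z_i\to 0$ with $Z_i:=\operatorname{Coker}(f_i)$. Setting $B:=\bigoplus_j X_j$, I push out each $\eta_i$ along the canonical inclusion $\iota_i^X: X_i\to B$. Since pushouts preserve monomorphisms in any abelian category, this produces, for each $i\in I$, a short exact sequence $\tilde{\eta}_i: 0\to B\stackrel{g'_i}{\to}P_i\to Z_i\to 0$, providing a family with common left end $B$.

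Next, I apply Corollary \ref{cor:the inverse correspondence} to the family $(\overline{\tilde{\eta}_i})\in\prod_i\operatorname{Ext}^1(Z_i,B)$: the bijectivity hypothesis on $\Psi_1$ ensures that its inverse is the colimit construction $\Theta$, so $(\overline{\tilde{\eta}_i})$ is the image of a genuine short exact sequence $0\to B\stackrel{f^*}{\to}\operatorname{colim}(g'_i)\to\bigoplus_i Z_i\to 0$. In particular $f^*$ is a monomorphism.

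The crucial remaining step, and the main obstacle, is to identify $\operatorname{colim}(g'_i)$ with $\bigoplus_i Y_i$ in such a way that $f^*$ corresponds to $\bigoplus_i f_i$. Since $(\bigoplus_i f_i)\iota_j^X=\iota_j^Y f_j$ for every $j$, the pushout universal property of each $P_j$ yields a morphism $\tau_j: P_j\to\bigoplus_i Y_i$ satisfying $\tau_j g'_j=\bigoplus_i f_i$, and together these assemble into a co-compatible family for the colimit, defining $\omega:\operatorname{colim}(g'_i)\to\bigoplus_i Y_i$ with $\omega f^*=\bigoplus_i f_i$. The inverse is constructed symmetrically: any co-compatible family $\{\alpha_i: P_i\to T\}$ with common left map $\beta:B\to T$ restricts, via the pushouts $P_i$, to morphisms $Y_i\to T$ that are coherent with the $f_i$ and hence factor uniquely through $\bigoplus_i Y_i$ by the coproduct universal property, producing an inverse of $\omega$. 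Under this isomorphism $f^*$ is exactly $\bigoplus_i f_i$, so the latter is a monomorphism, establishing Ab4. The whole argument hinges on recognizing that the colimit of these specific pushouts is the direct sum $\bigoplus_i Y_i$; this is a clean manipulation of pushout and coproduct universal properties, but it is what converts the abstract statement about $\Theta$ back into the concrete conclusion needed for Ab4.
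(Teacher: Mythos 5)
Your proof is correct and follows essentially the same route as the paper: push out each sequence along the coproduct inclusions to obtain a family with common left end $\bigoplus_i X_i$, invoke Corollary \ref{cor:the inverse correspondence} to realize the tuple as a genuine short exact sequence built from $\operatorname{colim}(g'_i)$, and then identify that colimit with $\bigoplus_i Y_i$ via the pushout and coproduct universal properties so that the monomorphism becomes $\bigoplus_i f_i$. The only difference is presentational: the paper writes out the mutually inverse morphisms $\Lambda$ and $\Lambda'$ explicitly where you sketch the same verification.
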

\begin{proof}
By \ref{prop:ext1 y coprods arb}, it is enough to prove that if $\Psi$
is biyective for every $Y\in\mathcal{C}$ and every set of objects
$\left\{ X_{i}\right\} _{i\in I}$, then $\mathcal{C}$ is Ab4. With
this purpose, we will consider a set of exact sequences $\left\{ \eta_{i}:\:\suc[A_{i}][B_{i}][C_{i}][\alpha_{i}][\beta_{i}]\right\} _{i\in I}$
and prove that the morphism $\bigoplus_{i\in I}\alpha_{i}:\bigoplus _{i\in I}A_i \rightarrow \bigoplus _{i\in I}B_i$ is a monomorphism.
Consider the coproduct $\bigoplus_{i\in I}A_{i}$ and the canonic 
inclusions $\mu_{i}:A_{i}\rightarrow\bigoplus_{i\in I}A_{i}$. \\
\begin{minipage}[t]{0.5\columnwidth}%
 By the dual result of \ref{prop:pb:operar a izquierda}, for every $i$ in $ I$
we have an exact sequence morphism $(\mu_{i},\mu_{i}',1):\eta_{i}\rightarrow\mu_{i}\eta_{i}$,
where 
\[
\mu_{i}\eta_{i}:\:\suc[\bigoplus_{i\in I}A_{i}][E_{i}][C_{i}][f_{i}]\mbox{.}
\]
Consider the correspondence
\end{minipage}\hfill{}%
\fbox{\begin{minipage}[t]{0.45\columnwidth}%

\[
\begin{tikzpicture}[-,>=to,shorten >=1pt,auto,node distance=1.3cm,main node/.style=,x=1.5cm,y=1.5cm]

   \node[main node] (1) at (0,0)      {$0$};
   \node[main node] (2) [right of=1]  {$A_i$};
   \node[main node] (3) [right of=2]  {$B_i$};
   \node[main node] (4) [right of=3]  {$C_i$};
   \node[main node] (5) [right of=4]  {$0$};

   \node[main node] (1') [below of=1]      {$0$};
   \node[main node] (2') [right of=1']  {$\bigoplus _{i\in I} A_i$};
   \node[main node] (3') [right of=2']  {$E_i $};
   \node[main node] (4') [right of=3']  {$C_i$};
   \node[main node] (5') [right of=4']  {$0$};

\draw[->, thin]   (1)  to node  {$$}  (2);
\draw[->, thin]   (2)  to node  {$\scriptstyle \alpha _i$}  (3);
\draw[->, thin]   (3)  to node  {$\scriptstyle \beta _i $}  (4);
\draw[->, thin]   (4)  to node  {$$}  (5);

\draw[->, thin]   (1')  to node  {$$}  (2');
\draw[->, thin]   (2')  to node  {$\scriptstyle f_i$}  (3');
\draw[->, thin]   (3')  to node  {$\scriptstyle $}  (4');
\draw[->, thin]   (4')  to node  {$$}  (5');

\draw[->, thin]   (2)  to node  {$\scriptstyle \mu _i$}  (2');
\draw[->, thin]   (3)  to node  {$\scriptstyle \mu _i '$}  (3');
\draw[-, double]   (4)  to node  {$\scriptstyle $}  (4');

\end{tikzpicture}
\]
\end{minipage}}

\[
\Psi:\Ext[\bigoplus_{i\in I}C_{i}][1][][\bigoplus_{i\in I}A_{i}]\rightarrow\prod_{i\in I}\Ext[C_{i}][1][][\bigoplus_{i\in I}A_{i}]\mbox{.}
\]
By \ref{cor:the inverse correspondence}, we know that  
 $\Psi^{-1}$ maps $(\mu_{i}\overline{\eta_{i}})\in\prod_{i\in I}\Ext[C_{i}][1][][\bigoplus_{i\in I}A_{i}]$
to the extension given by the exact sequence
\[
\suc[\bigoplus_{i\in I}A_{i}][\mbox{colim}(f_{i})][\bigoplus_{i\in I}C_{i}][f]\mbox{.}
\]
\fbox{\begin{minipage}[t]{0.3\columnwidth}%

\[
\begin{tikzpicture}[-,>=to,shorten >=1pt,auto,node distance=1.5cm,main node/.style=,x=1.3cm,y=1.3cm]

\node (1) at (0,1) {$A_i$};
\node (2) at (1,1) {$B_i$};
\node (3) at (0,0) {$\bigoplus A_i$};
\node (4) at (1,0) {$E_i$};

\begin{scope}[xshift=1.5cm,yshift=0cm]
   \node (6)  at (315:1.3cm)          {$X$}  ;
\end{scope}

\draw[->, thin]  (1)  to  node  {$\alpha _i$} (2);
\draw[->, thin]  (2)  to  node  {$\mu ' _i$} (4);
\draw[->, thin]  (1)  to  node  {$\mu  _i$} (3);
\draw[->, thin]  (3)  to  node [below]  {$f  _i$} (4);
\draw[->, dashed]  (3)  to [out=300,in=180] node  {$\alpha$} (6);
\draw[->, thin]  (2)  to [out=330,in=90] node [right] {$g_i$} (6);
\draw[->, dashed]  (4)  to  node [left] {$\gamma _i$} (6);

\end{tikzpicture}
\]
\end{minipage}}\hfill{}%
\begin{minipage}[t]{0.65\columnwidth}%

We will show that $\colim[f_{i}]=\bigoplus_{i\in I}B_{i}$ to conclude
that $\bigoplus_{i\in I}\alpha_{i}$ is a monomorphism. Indeed, consider
a family of morphisms $\left\{ g_{i}:B_{i}\rightarrow X\right\} _{i\in I}$.
By the universal property of the coproduct $\bigoplus_{i\in I}A_{i}$,
there is a unique morphism $\alpha:\bigoplus_{i\in I}A_{i}\rightarrow X$
such that $g_{i}\alpha_{i}=\alpha\mu_{i}\forall i\in I$. Now, by
the universal property of the pushout on the last equality, for every
$i\in I$ there is a unique morphism $\gamma_{i}:E_{i}\rightarrow X$
such that $g_{i}=\gamma_{i}\mu_{i}'$ and $\alpha=\gamma_{i}f_{i}$.%
\end{minipage}\\

Before going further, consider the co-compatible family of morphisms
associated to the colimit $\left\{ u_{k}:E_{k}\rightarrow\colim[f_{i}]\right\} _{k\in I}$.
Observe that, by the universal property of the colimit on the last
equalities, there is a unique morphism $\Lambda:\colim[f_{i}]\rightarrow X$
such that $\Lambda u_{i}=\gamma_{i}\,\forall i\in I$ and $\Lambda f=\alpha$.
\\
\begin{minipage}[t]{0.55\columnwidth}%
In particular, if $X=\bigoplus_{i\in I}B_{i}$ and $\left\{ g_{i}:B_{i}\rightarrow\bigoplus_{i\in I}B_{i}\right\} _{i\in I}$
is the set of canonic inclusions, there is a unique morphism $\Lambda:\colim[f_{i}]\rightarrow\bigoplus_{i\in I}B_{i}$
such that $\Lambda u_{i}=\gamma_{i}\,\forall i\in I$ and $\Lambda f=\alpha$.
Furthermore, by the universal property of the coproduct $\bigoplus_{i\in I}B_{i}$,
there is a unique morphism $\Lambda':\bigoplus_{i\in I}B_{i}\rightarrow\colim[f_{i}]$
such that $\Lambda'g_{i}=u_{i}\mu'_{i}\,\forall i\in I$. 

We shall now prove that $\Lambda$ is an isomorphism and $\Lambda'=\Lambda^{-1}$.
Observe that 
\[
\Lambda\Lambda'g_{i}=\Lambda u_{i}\mu'_{i}=\gamma_{i}\mu'_{i}=g_{i}\,\forall i\in I\mbox{.}
\]
Hence, by the universal property of the coproduct $\bigoplus_{i\in I}B$,
we can conclude that $\Lambda\Lambda'=1$.

Next, we prove that $\Lambda'\Lambda=1$. Observe that %
\end{minipage}\hfill{}%
\fbox{\begin{minipage}[t]{0.4\columnwidth}%

\[
\begin{tikzpicture}[-,>=to,shorten >=1pt,auto,node distance=1.5cm,main node/.style=,x=1.5cm,y=1.5cm]

\node (1) at (0,1) {$A_i$};
\node (2) at (1,1) {$B_i$};
\node (3) at (0,0) {$\bigoplus A_i$};
\node (4) at (1,0) {$E_i$};
\node (5) at (300:1.5cm) {$\operatorname{colim}f_i$};
\node (6) [below of=5] {$X$};
\node (7) [below of=6] {$\operatorname{colim}f_i$};

\draw[->, thin]  (1)  to  node  {$\alpha _i$} (2);
\draw[->, thin]  (2)  to  node  {$\mu ' _i$} (4);
\draw[->, thin]  (1)  to  node  {$\mu  _i$} (3);
\draw[->, thin]  (3)  to  node [below]  {$f  _i$} (4);
\draw[->, thin]  (3)  to  node [below left] {$f  $} (5);
\draw[->, thin]  (4)  to  node  {$u_i  $} (5);
\draw[->, dashed]  (5)  to  node  {$\Lambda$} (6);
\draw[->, dashed]  (6)  to  node  {$\Lambda'$} (7);
\draw[->, thin]  (1)  to [out=210,in=150] node [left] {$$} (6);
\draw[->, thin]  (3)  to [out=240,in=150] node [right] {$\alpha$} (6);
\draw[->, thin]  (2)  to [out=330,in=30] node [above left] {$g_i$} (6);
\draw[->, thin]  (2)  to [out=330,in=30] node [below left] {$u_i \mu ' _i$} (7);
\draw[->, thin]  (4)  to [out=300,in=30] node [left] {$\gamma _i$} (6);

\end{tikzpicture}
\]
\end{minipage}}
\[
u_{i}\mu'_{i}\alpha_{i}=u_{i}f_{i}\mu_{i}=f\mu_{i}\,\forall i\in I,
\]
and also that
\[
u_{i}\mu'_{i}\alpha_{i}=\Lambda'g_{i}\alpha_{i}=\Lambda'\alpha\mu_{i}=\left(\Lambda'\Lambda f\right)\mu_{i}\,\forall i\in I.
\]

Hence, by the last equalities and the universal property of the coproduct
$\bigoplus_{i\in I}A_{i}$, we can conclude that $f=\Lambda'\Lambda f$.
Furthermore, observe that 
\[
(u_{i})\mu'_{i}=u_{i}\mu'_{i}\mbox{ and }\left(u_{i}\right)f_{i}=f\,\forall i\in I\mbox{,}
\]
and also that 
\[
(\Lambda'\Lambda u_{i})\mu'_{i}=\Lambda'\gamma_{i}\mu'_{i}=\Lambda'g{}_{i}=u_{i}\mu'_{i}\mbox{ and }\left(\Lambda'\Lambda u_{i}\right)f_{i}=\Lambda'\Lambda f=f\,\forall i\in I\mbox{.}
\]
Hence, by the last equalities and the universal property of the pushout
$(E_{i},f_{i},\mu'_{i})$, we can conclude that $\Lambda'\Lambda u_{i}=u_{i}$.
Now, it follows from the universal property of the colimit that $\Lambda'\Lambda=1$.
Therefore, $\Lambda$ is an isomorphism and $\Lambda'=\Lambda^{-1}$.

By the last remark, without loss of generality 
$\colim[f_{i}]=\bigoplus_{i\in I}B_{i}$, $\Lambda=1=\Lambda'$, and
$g_{i}=u_{i}\mu'_{i}\,\forall i\in I$. Now, observe that 
\[
f\mu_{i}=g_{i}\alpha_{i}\,\forall i\in I\mbox{.}
\]
Hence, by the universal property of the corpoduct $\bigoplus_{i\in I}A_{i}$,
we can conclude that $f=\bigoplus_{i\in I}\alpha_{i}$. Therefore,
$\bigoplus_{i\in I}\alpha_{i}$ is a monomorphism.
\end{proof}
We have the following equivalences.
\begin{thm}
Let $\mathcal{C}$ be an Ab3 category. Then, the following statements
are equivalent:
\begin{enumerate}
\item $\mathcal{C}$ is an Ab4 category. 
\item The correspondence $\Psi:\Ext[\bigoplus_{i\in I}X_{i}][1][][Y]\rightarrow\prod_{i\in I}\Ext[X_{i}][1][][Y]$
is biyective for every $Y\in\mathcal{C}$ and every set of objects
$\left\{ X_{i}\right\} _{i\in I}$.
\item The correspondence $\Psi_{n}:\Ext[\bigoplus_{i\in I}X_{i}][n][][Y]\rightarrow\prod_{i\in I}\Ext[X_{i}][n][][Y]$
is biyective $\forall Y\in\mathcal{C}$, every set of objects $\left\{ X_{i}\right\} _{i\in I}$,
and $\forall n>0$.
\end{enumerate}
\end{thm}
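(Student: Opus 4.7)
The plan is to prove the equivalences by a short cycle (a)$\Rightarrow$(c)$\Rightarrow$(b)$\Rightarrow$(a), since each implication has already been established (or is trivial) in the preceding material; the theorem essentially packages them together.

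First, the implication (a)$\Rightarrow$(c) is exactly the content of Theorem \ref{prop:extn coprods arb}: assuming $\mathcal{C}$ is Ab4, for every $n\geq 1$, every family $\{X_i\}_{i\in I}$ in $\mathcal{C}$, and every $Y\in\mathcal{C}$, the correspondence $\Psi_n$ is an isomorphism of abelian groups. Nothing additional needs to be verified here; I would simply cite that theorem.

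Second, the implication (c)$\Rightarrow$(b) is immediate by specializing $n=1$, since $\Psi_1 = \Psi$. This requires no argument beyond this remark.

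Third, the implication (b)$\Rightarrow$(a) is precisely Theorem \ref{thm:Ab4 vs ext}. That result already shows that if $\Psi_1$ is bijective for all $Y$ and all families $\{X_i\}$, then one can recover arbitrary coproducts of monomorphisms as monomorphisms, so $\mathcal{C}$ is Ab4. I would cite this theorem directly.

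Since each of the three implications in the cycle is either immediate or has been proved in full detail earlier in the paper, there is no genuinely new obstacle in this theorem; the proof consists entirely of assembling the three references and closing the cycle, so it suffices to write: (a)$\Rightarrow$(c) by \ref{prop:extn coprods arb}, (c)$\Rightarrow$(b) by taking $n=1$, and (b)$\Rightarrow$(a) by \ref{thm:Ab4 vs ext}.
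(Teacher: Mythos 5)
Your proposal is correct and matches the paper's own argument, which simply cites Theorem \ref{prop:extn coprods arb} for (a)$\Rightarrow$(c) and Theorem \ref{thm:Ab4 vs ext} for (b)$\Rightarrow$(a), with (c)$\Rightarrow$(b) being the trivial specialization $n=1$. Nothing further is needed.
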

\begin{proof}
It follows from \ref{prop:extn coprods arb} and \ref{thm:Ab4 vs ext}.
\end{proof}

\begin{rem}
For an example of an  Ab3 category that is not Ab4, see the dual category of \cite[Example A.4]{vcoupek2017cotilting}.
\end{rem}
By duality we have the following result.
\begin{thm}
Let $\mathcal{C}$ be an Ab3{*} category. Then, the following statements
are equivalent:
\begin{enumerate}
\item $\mathcal{C}$ is an Ab4{*} category. 
\item The correspondence $\Psi:\Ext[Y][1][][\prod_{i\in I}X_{i}]\rightarrow\prod_{i\in I}\Ext[Y][1][][X_{i}]$
is biyective for every $Y\in\mathcal{C}$ and every set of objects
$\left\{ X_{i}\right\} _{i\in I}$.
\item The correspondence $\Psi_{n}:\Ext[Y][n][][\prod_{i\in I}X_{i}]\rightarrow\prod_{i\in I}\Ext[Y][n][][X_{i}]$
is biyective $\forall Y\in\mathcal{C}$, every set of objects $\left\{ X_{i}\right\} _{i\in I}$,
and $\forall n>0$.
\end{enumerate}
\end{thm}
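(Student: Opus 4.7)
The plan is to obtain this theorem as a formal consequence of the previous one by passing to the opposite category. If $\mathcal{C}$ is an Ab3$^*$ category, then $\mathcal{C}^{op}$ is an Ab3 category, and $\mathcal{C}$ is Ab4$^*$ precisely when $\mathcal{C}^{op}$ is Ab4. Under this duality, the product $\prod_{i\in I}X_i$ in $\mathcal{C}$ becomes the coproduct $\bigoplus_{i\in I}X_i$ in $\mathcal{C}^{op}$, and the canonical projections $\pi_i:\prod_{i\in I}X_i\rightarrow X_i$ become the canonical inclusions $\mu_i:X_i\rightarrow\bigoplus_{i\in I}X_i$ in $\mathcal{C}^{op}$.

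The key compatibility I would verify is that the left action $\alpha\cdot E$ of a morphism $\alpha$ on an extension $E$ in $\mathcal{C}$ corresponds, under the equivalence $\mathcal{E}^n_{\mathcal{C}}(C,A)\leftrightarrow\mathcal{E}^n_{\mathcal{C}^{op}}(A,C)$ obtained by reversing arrows, to the right action $E^{op}\cdot\alpha^{op}$ in $\mathcal{C}^{op}$ (and dually). This is essentially by construction, because the pushout used to define $\alpha E$ in $\mathcal{C}$ is a pullback in $\mathcal{C}^{op}$, which is exactly what is used to define the right action $E^{op}\alpha^{op}$ there. Consequently, the correspondence
\[
\Phi_n:\Ext[B][n][\mathcal{C}][\textstyle\prod_{i\in I}X_i]\rightarrow\prod_{i\in I}\Ext[B][n][\mathcal{C}][X_i],\qquad E\mapsto(\pi_i E)_{i\in I},
\]
is identified under the opposite-category equivalence with the correspondence
\[
\Psi_n:\Ext[\bigoplus_{i\in I}X_i][n][\mathcal{C}^{op}][B]\rightarrow\prod_{i\in I}\Ext[X_i][n][\mathcal{C}^{op}][B],\qquad E^{op}\mapsto(E^{op}\mu_i)_{i\in I},
\]
from the previous theorem applied to $\mathcal{C}^{op}$.

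Given this identification, the equivalences to prove here follow step by step. The implication $(1)\Rightarrow(3)$ is the dual of Theorem~\ref{prop:extn coprods arb}; the implication $(3)\Rightarrow(2)$ is the trivial specialization $n=1$; and $(2)\Rightarrow(1)$ is the dual of Theorem~\ref{thm:Ab4 vs ext}, which yields that $\mathcal{C}^{op}$ is Ab4, hence $\mathcal{C}$ is Ab4$^*$.

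The main (and essentially only) obstacle is bookkeeping: carefully checking that the equivalence relation $\equiv$ defining $\operatorname{Ext}^n$ is self-dual (reversing all arrows in the zig-zag $\eta_1,\ldots,\eta_k$ produces an admissible zig-zag in the opposite direction), that the Baer sum is preserved under dualization (so the group structures on $\Ext[B][n][\mathcal{C}][\prod X_i]$ and $\Ext[\bigoplus X_i^{op}][n][\mathcal{C}^{op}][B]$ agree), and that compositions $E F$ of $n$- and $m$-extensions correspond to $F^{op}E^{op}$ on the other side. Once these routine verifications are in place, the statement is an immediate corollary of the prior theorem applied to $\mathcal{C}^{op}$, so no additional ideas are required.
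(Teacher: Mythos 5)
Your proposal is correct and matches the paper exactly: the paper derives this theorem purely by duality from the coproduct version (Theorem \ref{thm:Ab4 vs ext} together with \ref{prop:extn coprods arb}), which is precisely your opposite-category argument. The routine verifications you flag (self-duality of $\equiv$, of the Baer sum, and of the actions) are left implicit in the paper but are the same bookkeeping you describe.
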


\section*{Acknowledgements}

I wish to thank my advisor Octavio Mendoza for encouraging me to publish this work and for proof reading the article. I am also grateful to Sergio Estrada whose comments greatly improved the quality of this paper.

\bibliographystyle{plain}
\bibliography{yonedaext}

\end{document}